\newtheorem{theorem}{Theorem}[section]
\newtheorem{theorem*}{Theorem A\!\!}
\newtheorem{proposition}{Proposition}[section]
\newtheorem{proposition*}{Proposition A\!\!}
\newtheorem{corollary*}{Corollary A\!\!}
\newtheorem{lemma}{Lemma}[section]
\newtheorem{definition}{Definition}[section]
\DeclareMathOperator{\Mat}{Mat}
\DeclareMathOperator{\Id}{Id}
\DeclareMathOperator{\tr}{tr}
\DeclareMathOperator{\res}{res}
\DeclareMathOperator{\sgn}{sgn}
\DeclareMathOperator{\diag}{diag}
\begin{document}

\title{Symmetry breaking differential operators, the source operator  and Rodrigues formul\ae}

\author{Jean-Louis Clerc}

\date{ }
\maketitle

\abstract{A Rodrigues type formula is obtained for the symbols of the covariant bi-differential operators  on a simple real Jordan algebra.}

\section*{Introduction}
\emph{Symmetry breaking differential operators} (SBDO for short) are a classical notion in physics, and they have interested many authors in mathematics during the recent years. T. Kobayashi has designed a program to study the existence, the uniqueness and the construction of such operators; We will use a restricted version of the notion of SBDO, adapted to the present article and we will be concerned mostly by the constructive part of the program, even more precisely in an  effort to give explicit expressions for these operators. Here is a (non exhaustive) list of recent papers on the subject \cite{bc,bck,bck2,c1,c2,fos,j,k,kp16,ks,or,s}.

Let $M$ be a manifold, $G$ (called the big group) a Lie group acting on $M$, $N$ a submanifold of $M$ and $H$ (called the small group) a closed Lie subgroup of $G$ which preserves $N$. Let $\pi$ be a smooth representation of $G$ on $C^\infty(M)$ and $\rho$ a smooth representation of $H$ on $C^\infty(N)$. Let $D$ be a differential operator from $C^\infty(M)$ into $C^\infty(N)$. Then $D$ is said to be a \emph{symmetry breaking differential operator} (SBDO for short) if $D$ intertwines $\pi_{\vert H}$ and $\rho$, i.e.
\[\forall h\in H,\qquad  D\circ \pi(h) = \rho(h) \circ D\ .
\]

A specific situation is the tensor product situation, where the big group is $G\times G$ and the subgroup is the diagonal $diag(G) \simeq G$. The representation is a tensor product $\pi\otimes \pi'$ of two representations of $G$ and the symmetry breaking differential operator $D$ is also called a \emph{ covariant bi-differential operator}.

The present paper is a continuation of \cite{bck} written in collaboration with S. Ben Sa\"id and Kh. Koufany. The group $G$ is the conformal group of a simple real Jordan algebra $V$. The group $G$ is simple, and the subgroup $P$ of affine conformal transformations is a parabolic subgroup of $G$ with unipotent radical and such that its opposite parabolic $\overline P$ is conjugate to $P$. 

To  any $(\lambda, \epsilon)\in \mathbb C\times \{ \pm \}$ is associated a smooth representation $\widetilde \pi_{\lambda,\epsilon}$ induced by a character $\chi_{\lambda, \epsilon}$ of $P$ (degenerate non-unitary scalar principal series) realized on the sections of a line bundle over $X=G/\overline P$.

The action of $G$ on $X$ can be transferred as a rational action of $G$ on $V$, and 
the representation can be realized as $\pi_{\lambda, \epsilon}$ acting on smooth functions on $V$ (the \emph{noncompact picture}).

A typical example is $V= \Mat(r;\mathbb R)$, $G=SL(2r,\mathbb C)$, $X=Gr(r,2r;\mathbb R)$, and for $g^{-1} = \begin{pmatrix} a&b\\c&d\end{pmatrix}$
\[\pi_{\lambda, \epsilon}(g)  f(x) = \det(cx+d)^{-\lambda, \epsilon} f\big( (ax+b)(cx+d)^{-1}\big)\ .
\]
When $r=1$, the Jordan algebra is just $\mathbb R$, and the covariant bi-differential operators are the classical \emph{Rankin-Cohen brackets}. More details on these examples can be found in \cite{c1}.

Let $n$ be the dimension of $V$, $\det$ the determinant of the Jordan algebra and let $r$ be the rank of $V$. Consider the product $V\times V$ and let $diag(V) \simeq V$ be the diagonal of $V\times V$. Let \[\res: C^\infty(V\times V) \longrightarrow C^\infty(V)\] be the restriction map to the diagonal $\{(x,x),x\in V\} \simeq V$. The main result in this article is the following theorem, which is a new result even for the classical case of the Rankin-Cohen brackets.

\begin{theorem} Let $(\lambda,\epsilon), (\mu, \eta) \in \mathbb C\times \{ \pm \}$.
\smallskip

i) for any $s,t\in \mathbb C$ and for any $k\in \mathbb N$, there exists a polynomial $c^{(k)}_{\lambda, \mu}$ on $V\times V$ such that
\begin{equation}\label{defpol}
\det\left( \frac{ \partial}{\partial x} -\frac{ \partial}{\partial y} \right)^k (\det x)^{s+k}(\det y)^{t+k}= c^{(k)}_{s,t}(x,y)(\det x) ^s (\det y) ^t\ .
\end{equation}
Let  $B^{(k)}_{\lambda,\mu} : C^\infty(V\times V) \longrightarrow C^\infty(V)$ be the bi-differential operator given by
\[B^{(k)}_{\lambda,\mu}= \res\, \circ \, c^{(k)}_{\lambda-\frac{n}{r}, \mu-\frac{n}{r}}
\left( \frac{\partial}{\partial x}, \frac{\partial}{\partial y}\right)\ .\]
Then $B^{(k)}_{\lambda,\mu}$ satisfies the following covariance relation
\begin{equation}
\text{for any } g\in G,\quad
B^{(k)}_{\lambda,\mu} \circ \big(\pi_{\lambda, \epsilon}(g)\otimes \pi_{\mu, \eta}(g)\big)=
\pi_{\lambda+\mu+2k, \epsilon \eta}(g) \circ B^{(k)}_{\lambda,\mu}\ .
\end{equation}

\end{theorem}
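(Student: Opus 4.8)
I would treat the two assertions in turn.

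\emph{Part (i).} Argue by induction on $k$, the engine being the Bernstein--Sato identity for the Jordan determinant: there is a polynomial $b$ of degree $r$ with $\det\left(\frac{\partial}{\partial x}\right)(\det x)^{\sigma}=b(\sigma)(\det x)^{\sigma-1}$, and, more generally, each polarisation $\det_{\ell}$ of $\det$ (of bidegree $(r-\ell,\ell)$, so that $\det(z-w)=\sum_{\ell=0}^{r}(-1)^{\ell}\det_{\ell}(z,w)$) maps $(\det x)^{\sigma}\,\mathbb{C}[x]$ into $(\det x)^{\sigma-1}\,\mathbb{C}[x]$, the induced endomorphism of $\mathbb{C}[x]$ depending polynomially on $\sigma$. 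Expanding $\det\left(\frac{\partial}{\partial x}-\frac{\partial}{\partial y}\right)=\sum_{\ell}(-1)^{\ell}\det_{\ell}\left(\frac{\partial}{\partial x},\frac{\partial}{\partial y}\right)$ and applying this term by term, one sees that $\det\left(\frac{\partial}{\partial x}-\frac{\partial}{\partial y}\right)$ carries $(\det x)^{\sigma}(\det y)^{\tau}\,\mathbb{C}[x,y]$ into $(\det x)^{\sigma-1}(\det y)^{\tau-1}\,\mathbb{C}[x,y]$, polynomially in $(\sigma,\tau)$. Applying this $k$ times to $(\det x)^{s+k}(\det y)^{t+k}$ produces precisely a polynomial factor $c^{(k)}_{s,t}$ (of degree $rk$); the same computation, via the Leibniz expansion of $\det(\partial_x-\partial_y)$ on a product, gives the recursion
\[(\det x)^{s}(\det y)^{t}\,c^{(k+1)}_{s,t}=\det\left(\frac{\partial}{\partial x}-\frac{\partial}{\partial y}\right)\big[(\det x)^{s+1}(\det y)^{t+1}\,c^{(k)}_{s+1,t+1}\big],\]
which I will use in part (ii).

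\emph{Part (ii).} The intertwining relation is linear in $g$, so it is enough to check it for $g$ ranging over a generating set of $G$. I would take the translations $n_{a}:x\mapsto x+a$ of the unipotent radical $N$, the linear maps $\ell\in L$ of the structure group, and the conformal inversion $\iota:x\mapsto -x^{-1}$, which together generate $G$ because $G=\langle P,\iota\rangle$ with $P=L\ltimes N$. Covariance under $N$ is immediate: $\pi_{\lambda,\epsilon}(n_{a})\otimes\pi_{\mu,\eta}(n_{a})$ is the diagonal translation $F\mapsto F(\cdot-a,\cdot-a)$, the operator $c^{(k)}_{\lambda-\frac nr,\mu-\frac nr}(\partial_x,\partial_y)$ has constant coefficients, and $\res$ commutes with diagonal translations. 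Covariance under $L$ is a question of weights: $\det$ is the relative invariant of $L$ on $V$, the construction of the $c^{(k)}$ is $L$-equivariant, hence $c^{(k)}_{s,t}$ is a relative invariant for the diagonal action of $L$ on $V\times V$ of the bidegree and sign read off from \eqref{defpol}; together with the elementary fact that $\res$ intertwines $\pi_{a}\otimes\pi_{b}$ with $\pi_{a+b}$, this is exactly what makes $\res\circ c^{(k)}(\partial_x,\partial_y)$ $L$-covariant with the announced shift $(\lambda,\epsilon),(\mu,\eta)\mapsto(\lambda+\mu+2k,\epsilon\eta)$.

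The decisive point is covariance under $\iota$. Writing $D:=c^{(k)}_{\lambda-\frac nr,\mu-\frac nr}(\partial_x,\partial_y)$ and using $\pi_{\lambda,\epsilon}(\iota)f(x)=(\det x)^{-\lambda,\epsilon}f(-x^{-1})$, the relation unwinds to
\[\big[D\big((\det x)^{-\lambda,\epsilon}(\det y)^{-\mu,\eta}F(-x^{-1},-y^{-1})\big)\big]_{x=y=z}=(\det z)^{-(\lambda+\mu+2k),\,\epsilon\eta}\,(DF)(-z^{-1},-z^{-1}).\]
The plan is to use the Rodrigues formula \eqref{defpol} backwards, rewriting the constant-coefficient operator $D$ as $\det(\partial_x-\partial_y)^{k}$ conjugated by multiplications by powers of $\det x$ and $\det y$; this reduces the claim to the transformation behaviour of the single operator $\det\left(\frac{\partial}{\partial x}-\frac{\partial}{\partial y}\right)$ under $x\mapsto -x^{-1}$, $y\mapsto -y^{-1}$, which is in turn governed by the conformal covariance of $\det\left(\frac{\partial}{\partial x}\right)$ at the distinguished parameter --- a Bol-type lemma for the Jordan determinant, proved from $\det(x^{-1})=(\det x)^{-1}$, $\nabla\det(x)=(\det x)\,x^{-1}$, and the fact that the Jacobian of $\iota$ equals $\pm(\det x)^{-2n/r}$. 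Feeding in the recursion from part (i), the computation is then run as an induction on $k$. I expect this step to be the main obstacle: one has to reconcile the nonlinear substitution $x\mapsto x^{-1}$, the many mixed terms of $\det(\partial_x-\partial_y)^{k}$, and the $\det$-power twists, and see that everything collapses onto the displayed identity. A fallback is to pass to the Fourier (``$F$-method'') side, where covariance of $\res\circ D$ becomes an explicit system of second-order differential equations for the symbol $c^{(k)}_{s,t}$, and to verify by a generating-function argument that the Rodrigues polynomial solves it; in that picture the ``source operator'' of the title is the operator on symbols whose $k$-fold iterate is computed by \eqref{defpol}.
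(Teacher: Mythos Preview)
Your treatment of Part~(i) is fine and parallels the paper, which simply cites the key fact (Theorem~4.8 of \cite{bck}) that $\det(\partial_\xi-\partial_\zeta)$ sends $(\det\xi)^s(\det\zeta)^t$ to a polynomial times $(\det\xi)^{s-1}(\det\zeta)^{t-1}$; iteration gives the Rodrigues formula and the recursion you wrote down.

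For Part~(ii) your route diverges from the paper, and the decisive step has a genuine gap. The paper never checks covariance under $\iota$ by a direct computation. Instead it \emph{first} builds covariant operators and \emph{then} identifies their symbols with the Rodrigues polynomials. Concretely: the source operator $F_{\lambda,\mu}$ is defined (up to normalisation) as the composition $(\widetilde I\otimes\widetilde I)\circ\widetilde M\circ(\widetilde I\otimes\widetilde I)$ of Knapp--Stein intertwiners and the multiplication by $\det(x-y)$; covariance is then automatic, since each factor is an intertwiner. One sets $B^{(k)}_{\lambda,\mu}=\res\circ F_{\lambda+k-1,\mu+k-1}\circ\cdots\circ F_{\lambda,\mu}$, so the covariance of $B^{(k)}$ is inherited. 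The real work is to show that the \emph{symbol} $b^{(k)}_{\lambda,\mu}$ of this already-covariant operator satisfies the same recursion as $c^{(k)}_{\lambda-n/r,\mu-n/r}$, hence equals it; this is done via the symbolic calculus of Section~1, translating the defining relation of $F_{\lambda,\mu}$ into a $\flat$-identity on the dual side.

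Your plan for $\iota$ would instead need to show covariance of $\res\circ c^{(k)}(\partial_x,\partial_y)$ from scratch, and the key move---``rewriting the constant-coefficient operator $D$ as $\det(\partial_x-\partial_y)^k$ conjugated by multiplications by powers of $\det x$ and $\det y$''---does not work as stated. The Rodrigues formula \eqref{defpol} is an equality of \emph{functions} (both sides applied to $1$), not an operator identity: the conjugated operator $(\det x)^{-s}(\det y)^{-t}\det(\partial_x-\partial_y)^k(\det x)^{s+k}(\det y)^{t+k}$ has variable coefficients and is not equal to $c^{(k)}_{s,t}(\partial_x,\partial_y)$. So the reduction to a Bol-type lemma for $\det(\partial_x-\partial_y)$ is not available in the form you suggest, and the induction on $k$ does not get off the ground. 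Your fallback via the $F$-method is closer in spirit, but the paper's mechanism is still different: rather than verifying that $c^{(k)}$ solves a PDE system encoding covariance, it imports covariance wholesale from the Knapp--Stein construction and then matches symbols by recursion.
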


The proof goes through four steps.
\vfill\eject

\bigskip

\centerline{ CONSTRUCTION OF THE SOURCE OPERATOR}
\bigskip

\centerline {CONSTRUCTION OF SBDO FROM THE SOURCE OPERATOR}
\bigskip

\centerline{RECURSION RELATION  FOR THE SYMBOLS OF THE SBDO}
\bigskip

\centerline{ RODRIGUES FORMULA FOR THE SYMBOLS OF THE SBDO}
\bigskip

The \emph{source operator} is a differential operator $E_{\lambda, \mu}$ with polynomial coefficients on $V\times V$ which has the following covariance property, valid for any $g\in G$ :
\begin{equation}
E_{\lambda \mu} \circ \big(\pi_{\lambda, \epsilon} (g) \otimes \pi_{\mu,\eta}(g) \big) = \big(\pi_{\lambda+1,- \epsilon} (g) \otimes \pi_{\mu+1,-\eta}(g)\big)\circ 
E_{\lambda, \mu}\ .
 \end{equation}
 The second step is an easy and  a context free consequence of the first step. The covariant bi-differential operators, going from $C^\infty(V\times V)$ into $C^\infty(V)$ are defined by
 \begin{equation}\label{defB}
 B^{(k)}_{\lambda, \mu} = \res\, \circ\, E_{\lambda+k-1, \mu+k-1}\circ \dots \circ E_{\lambda, \mu}\ ,
 \end{equation}
 and satisfy the covariance relation, valid for any $g\in G$
 \begin{equation}
 B_{\lambda,\mu}^{(k)} \circ \big(\pi_{\lambda, \epsilon}(g)\otimes \pi_{\mu, \eta}(g)\big) = \pi_{\lambda+\mu+2k,\epsilon \eta}(g) \circ  B_{\lambda,\mu}^{(k)}\ .
 \end{equation}
 Notice moreover that the operators $B^{(k)}_{\lambda, \mu}$ have constant coefficients, as can be deduced from the covariance property for the diagonal translations  $(x,y)\longmapsto (x+v, y+v)$.
 
 The two first steps have been achieved in many different geometric  situations, and also for representations acting on sections of vector bundles (\cite{bc, c1, c2, bck, bck2, fos}). Let us mention that the idea of the source operator is reminiscent of the $\Omega$ process used for the construction of the \emph{transvectants} (see Section 5 in \cite{c1}).
 
For the present geometric situation, the first step was achieved in \cite{bck}, mainly by using delicate Fourier analysis on $V$.  In that sense, the present work is a continuation of \cite{bck},  in an effort to find a more explicit formula for the covariant bi-differential operators. 
  
For the third step, the new idea in the present work is to deduce from the definition \eqref{defB} a recurrence relation between the symbol of $B_{\lambda,\mu}^{(k)}$ and the symbol of $B_{\lambda+1,\mu+1}^{(k-1)}$. In the fourth step, this recurrence relation is reminiscent from the recurrence relation that can be deduced from the classical \emph{Rodrigues formul\ae} and this observation suggests a solution for the relation recurrence. 
 
The r\^ole of the Rodrigues formula was discovered in three (rather) elementary cases by explicit calculations :  the Rankin-Cohen brackets,  the Juhl operators and the  conformally covariant bi-differential operators on $\mathbb R^n$. These results are presented at the end of the paper.  For the two first cases, the relation of the symbols of the operators to families of orthogonal polynomials (the Jacobi polynomials for the Rankin-Cohen brackets, the Gegenbauer polynomials for the Juhl operators) had been observed before (see \cite{ks}, \cite{kp16}, \cite{j}). In both cases, the recurrence relation obtained for the symbols of the differential operators could be compared to the classical Rodrigues formula for the families of orthogonal polynomials involved. The third case was new, but no connection to families of orthogonal polynomials or special functions is known.  
 
The Fourier transform plays an important r\^ole in \cite{bck}. In order to gain flexibility in some computations, typically for rewriting a differential operator with polynomial coefficients in its \emph{normal form}, that is to say differentiation before multiplication, {\it ad hoc} symbolic calculi, both on the space and on its dual, are developped in Section 1 on a general real vector space. They are inspired by the symbolic calculus in the Weyl algebra and/or the pseudo-differential symbolic calculus. However, it is more an  algebraic calculus as no asymptotic analysis is needed.

In Section 2, the construction of the source operator is recalled, mostly relying on \cite{bck}. The calculi developed in Section 2 are applied to the objects introduced in \cite{bck}, and 

Section 3 introduces the covariant bi-differential operators and the recurrence relation satisfied by their symbols.
 
Section 4 is devoted to the Rodrigues formula.

Section 5 presents the three mentioned examples.

The definition \eqref{defpol} of the polynomials $c^{(k)}_{\lambda, \mu}$ through a Rodrigues type formula arises the question of wether they could be associated to a theory of orthogonal polynomials, at least for particular values of the parameters $\lambda, \mu$. It is only speculation at the moment, but the question deserves further investigation.

\section{Dual symbolic calculi on $E$ and $E^*$}

\subsection{The symbolic calculus on $E$}

Let $E$ be a real vector space of dimension $N$. Choosing a basis of $E$ allows to identify $E$ with  $\mathbb R^N$. The corresponding coordinates of an element $x\in E$ will be denoted by $(x_1,x_2,\dots, x_N)$.  For $1\leq j\leq N$, let
$ \displaystyle  \partial_j = \frac{\partial}{\partial x_j}$.
For $\alpha=(\alpha_1,\alpha_2,\dots, \alpha_N)$ a $N$-multiindex, we let
\[\vert \alpha\vert = \alpha_1+\alpha_2+\dots \alpha_N,\qquad  \alpha! = \alpha_1!\alpha_2!\dots \alpha_N!\]
\[x^\alpha = x_1^{\alpha_1}x_{\alpha_2} \dots x_N^{\alpha_N},\qquad \partial^\alpha = \partial_1^{\alpha_1} \partial_2^{\alpha_1}\dots \partial_N^{\alpha_N}\ .
\]

Let $E^*$ be the dual of $E$, also identified with $\mathbb R^N$ via the basis of $E^*$ dual to the chosen basis for $E$. Elements of $E^*$ will be denoted by greek letters and their coordinates by
$\xi = (\xi_1,\xi_2,\dots, \xi_N)$. The duality between $E$ and $E^*$ is denoted by $x.\xi, x\in E,\xi \in E^*$. The dual ${(E^*)}^*$ is identified with $E$.

For $1\leq j\leq N$, let $\displaystyle \partial_{*j}= \frac{\partial}{\partial \xi_j}$, and as before, for a N-multiindex $\alpha$, let
\[\xi^\alpha = \xi_1^{\alpha_1} \xi_2^{\alpha_2}\dots x_N^{\alpha_N} ,\qquad  \partial_*^{\alpha } = \partial_{*1}^{\alpha_1}\partial_{*2}^{\alpha_2}\dots \partial_{*N}^{\alpha_N}
\]
The Fourier transform is viewed as map from functions on $E$ to functions on $E^*$, given by 
\[\mathcal F(f)(\xi) = \widehat f (\xi) = \int_ E e^{-i x.\xi} f(x)\, dx\ .
\]
On $E^*$, the inverse Fourier transform is defined by
\[(\mathcal F^{\,*}g)(x)  = \check g (x) = \left(2\pi\right)^{-N} \int_{E^*} e^{ix.\xi}g (\xi )\, d\xi.
\]
The classical formula of Fourier analysis are
\begin{equation}
\begin{split}
\mathcal F \circ x^\alpha = ({i\partial_*})^\alpha \circ \mathcal F,& \qquad \mathcal F \circ { (\frac{1}{i} \partial)^\alpha} = \xi^\alpha \circ \mathcal F
\\ 
\mathcal F^* \circ \xi^\alpha = (\frac{1}{i} \partial)^\alpha \circ \mathcal F^*,& \qquad \mathcal F^* \circ ({i\partial_*})^\alpha = x^\alpha \circ \mathcal F^*\ .
\end{split}
\end{equation}

Define  $Op(E)$ to be the space of operators from $\mathcal S(E)$ into $\mathcal S'(E)$ generated by finite combination of products of a convolution operator by a tempered distribution on $E$ followed by a multiplication by a polynomial function on $E$. For $k$  a tempered distribution, denote by $K$ the corresponding convolution operator, viewed as an operator from $\mathcal S(\mathbb R^N)$ into $\mathcal S'(\mathbb R^N)$ given by
\[K\varphi (x) = (k\star \varphi)(x)\ .
\]
 For $p$ is a polynomial function on $E$, the multiplication operator
 \[f\longmapsto (pf)(x) =p(x) f(x)
 \]
 seen as an operator from $\mathcal S(E)$ into $\mathcal S(E)$ or from $\mathcal S'(E)$ into $\mathcal S'(E)$ is denoted by by $p(x)$ or $p$ depending on the context.
 
Hence an element of  $Op(E)$ is a finite linear combination of operators of the form $p(x)\circ K$, respecting the convention of normal order (convolution first, then multiplication). Any element of $Op(E)$ can be written in a unique way as
\[\sum_{\alpha} x^\alpha L_\alpha
\]
where for each N-multiindex $\alpha$ $L_\alpha$ is a convolution operator with a tempered distribution $E$, and such that $L_\alpha$ is $0$ except for a finite number of $\alpha$'s.

The space $Op(E)$ contains the Weyl algebra $\mathcal W(E)$ (algebra of differential operators with polynomial coefficents). In fact, if $D$ is a constant coefficients differential operator, then for a test function $f\in \mathcal S(E)$,
\[Df = D(\delta_0\star f) = (D\delta_0) \star f,
\]
where $\delta_0$ is the Dirac distribution at $0$. As $D\delta_0$ is a tempered distribution, $D$ belongs to $Op(E)$. Moreover, if $p$ is a polynomial function on $E$, the operator $p(x) D$ also belong to $Op(E)$ and hence the Weyl algebra $\mathcal W(E)$ is included in $Op(E)$.

The \emph {symbol}  is the Fourier side version of these operators. For $K$ the convolution operator with the tempered distribution $k(x)$, the symbol is defined as the distribution on $E^*$ equal to $\widehat k(\xi)$, whereas for $p$ a polynomial function on $E$, then its symbol is just $p(x)$. For the product in normal order $p(x)K$, the symbol is defined to be $p(x) \widehat k(\xi)$. This corresponds to the following formula (to be understood as an equality of tempered distributions) which is easy to prove :
\[p(x)Kf\,(x) = (2\pi)^{-N} \int_E e^{ix.\xi} p(x) \widehat k(\xi) \widehat f(\xi) d\xi\ .
\]
\begin{definition} Let $L=\sum_\alpha x^\alpha K_\alpha$ be an element of $Op(E)$. Its symbol $symb(L)$ is the element of $\mathcal Pol(E, \mathcal S'(E^*))$ defined by
\[symb(L)(x,\xi) = \sum_{\alpha} x^\alpha \mathcal Fk_\alpha(\xi)\ .
\]
Conversely, given $s(x,\xi)= \sum_\alpha x^\alpha l_\alpha(\xi)\in \mathcal Pol(E, \mathcal S'(E^*))$, the operator
\[L = \sum_\alpha x^\alpha K_\alpha 
\]
where $K_\alpha$ is the convolution operator by the tempered distribution $\mathcal F^*l_\alpha(x)$
belongs to $Op(E)$ and its symbol is equal to $s(x,\xi)$.
\end{definition}

Let $D\in \mathcal W(E)$. Recall that its symbol (in the usual sense) $\sigma(D)$ is the polynomial function on $E\times E^*$ defined by
\[D (e^{ix.\xi}) = \sigma(D)(x,\xi) e^{ix.\xi}
\]
\begin{proposition} Let $D\in \mathcal W(E)$. Then  
\[symb(D) =\sigma(D)\ .
\]
\end{proposition}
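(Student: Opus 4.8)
The plan is to use that both maps $D\mapsto symb(D)$ and $D\mapsto\sigma(D)$ are linear, so it suffices to verify the identity on a spanning family of the Weyl algebra $\mathcal W(E)$. A convenient choice is the family of operators $D_{\alpha,\beta}=x^\alpha\,\partial^\beta$ (multiplication by the monomial $x^\alpha$ composed on the left of the constant-coefficient operator $\partial^\beta$), since every element of $\mathcal W(E)$, written with polynomial coefficients placed on the left of the derivatives, is a finite linear combination of such operators.

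First I would record how $D_{\alpha,\beta}$ sits inside $Op(E)$. The operator $\partial^\beta$ is the convolution operator $K_\beta$ with the tempered distribution $k_\beta=\partial^\beta\delta_0$, because $\partial^\beta f=\partial^\beta(\delta_0\star f)=(\partial^\beta\delta_0)\star f$ for $f\in\mathcal S(E)$. Hence $D_{\alpha,\beta}=x^\alpha\circ K_\beta$ is already written in the normal order (convolution first, multiplication by a polynomial afterwards) required by the definition of $symb$, so no reordering inside $Op(E)$ is needed. By the definition of the symbol, $symb(D_{\alpha,\beta})(x,\xi)=x^\alpha\,\mathcal F k_\beta(\xi)$. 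From the classical Fourier formulas recalled above, for every $f\in\mathcal S(E)$ one has
\[
\mathcal F(\partial^\beta\delta_0)\cdot\widehat f=\mathcal F\big((\partial^\beta\delta_0)\star f\big)=\mathcal F(\partial^\beta f)=(i\xi)^\beta\,\widehat f,
\]
so $\mathcal F k_\beta(\xi)=(i\xi)^\beta$ and therefore $symb(D_{\alpha,\beta})(x,\xi)=x^\alpha(i\xi)^\beta$.

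Next I would compute $\sigma(D_{\alpha,\beta})$ straight from its defining relation: applying $D_{\alpha,\beta}$ to $e^{ix.\xi}$ gives $x^\alpha\,\partial_x^\beta e^{ix.\xi}=x^\alpha(i\xi)^\beta e^{ix.\xi}$, whence $\sigma(D_{\alpha,\beta})(x,\xi)=x^\alpha(i\xi)^\beta$. This agrees with the expression for $symb(D_{\alpha,\beta})$ found above, and linearity in $D$ then yields $symb(D)=\sigma(D)$ for all $D\in\mathcal W(E)$.

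The argument is essentially a verification rather than a deep statement; the only points that need care are the bookkeeping of the normalization and sign conventions in the Fourier transform (notably $\mathcal F\circ(\tfrac1i\partial)^\beta=\xi^\beta\circ\mathcal F$, giving the factor $(i\xi)^\beta$), and the observation that $D_{\alpha,\beta}$ as written already respects the normal-ordering convention built into the definition of $symb$, so that its symbol is read off directly without any commutation in $Op(E)$.
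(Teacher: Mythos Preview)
Your proof is correct and follows essentially the same approach as the paper: both reduce by linearity to a spanning family, identify $\partial^\beta$ as convolution by $\partial^\beta\delta_0$, compute its Fourier transform as $(i\xi)^\beta$, and then observe that left multiplication by $x^\alpha$ contributes the factor $x^\alpha$ on both sides. The paper's proof is simply more terse, treating only $D=\partial^\alpha$ explicitly and leaving the passage to $x^\alpha\partial^\beta$ implicit in the phrase ``the general case follows easily''; you have spelled that step out.
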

\begin{proof} Let first $D= \partial^\alpha$ for some N-multiindex $\alpha$. Then $\sigma(D) (x,\xi) = (i\xi)^\alpha$. On the other hand, $D$ is the convolution operator with the distribution $ D^\alpha \delta_0$ and $\mathcal F (\partial^\alpha \delta_0) (\xi) = (i\xi)^\alpha\mathcal F \delta_0 (\xi) = (i\xi)^\alpha$, so that $symb(\partial^\alpha)(\xi) = (i\xi)^\alpha= \sigma(\partial^\alpha)(\xi)$. The general case follows easily. 
\end{proof}

The space $Op(E)$ is not an algebra, but some compositions are possible, and then the symbol of the composed operator is given by a formula similar to the composition formula in the pseudo-differential calculus or the composition formula for the symbols in the Weyl algebra.

Let $L\in Op(E)$ and $D\in\mathcal W(E)$. Then $L\circ D$ is well defined as an operator from $\mathcal S(E)$ into $\mathcal S'(E)$.

\begin{proposition}\label{compKp}
Let $L$ be an element of $Op(E)$, and let $D\in \mathcal W(E)$. Then $L\circ D$ belongs to $Op(E)$ and its symbol is given by
\begin{equation}\label{compsymb}
symb(L\circ D) : =symb(L)\sharp symb(D) = \sum_\alpha \frac{1}{\alpha!}\, \left(\frac{1}{i}\partial_\xi\right)^\alpha  symb(L)\,\partial_x^\alpha symb(D)\,\ .
\end{equation}
\end{proposition}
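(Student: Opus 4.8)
The plan is to reduce to a single generating term and then verify the formula by a finite Taylor expansion on the Fourier side. By bilinearity of $(L,D)\mapsto L\circ D$ and of the $\sharp$-product, it suffices to treat $L=p(x)\circ K$ with $K$ the convolution operator by a tempered distribution $k$, so that $symb(L)(x,\xi)=p(x)\widehat k(\xi)$, together with an arbitrary $D\in\mathcal W(E)$. By the preceding proposition $symb(D)=\sigma(D)$ is a polynomial in $(x,\xi)$, so in \eqref{compsymb} the sum over $\alpha$ is finite and $symb(L)\sharp symb(D)$ lies in $\mathcal Pol(E,\mathcal S'(E^*))$ (a $\xi$-derivative of a tempered distribution, multiplied by polynomials in $x$ and in $\xi$). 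Hence, by the converse part of the Definition, $symb(L)\sharp symb(D)$ is the symbol of a well-defined operator $M\in Op(E)$, and, by the integral representation recalled before the Definition, $M$ acts by
\[
(Mf)(x)=(2\pi)^{-N}\int_{E^*}\big(symb(L)\sharp symb(D)\big)(x,\xi)\,e^{ix.\xi}\,\widehat f(\xi)\,d\xi .
\]
Everything then comes down to the identity $L\circ D=M$.

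To obtain it I would compute $L(Df)$ for $f\in\mathcal S(E)$, using that $\mathcal W(E)$ preserves $\mathcal S(E)$, so $Df\in\mathcal S(E)$, together with the defining property $Df(z)=(2\pi)^{-N}\int_{E^*}\sigma(D)(z,\xi)e^{iz.\xi}\widehat f(\xi)\,d\xi$ and $L(Df)(x)=p(x)\,(k\star Df)(x)$. Writing the convolution out, substituting $z=x-y$, and expanding the polynomial $\sigma(D)(x-y,\xi)$ in its first argument by the exact finite Taylor formula $\sigma(D)(x-y,\xi)=\sum_\alpha\frac{(-y)^\alpha}{\alpha!}\,\partial_x^\alpha\sigma(D)(x,\xi)$, the $y$-integration produces the factors $\int_E k(y)(-y)^\alpha e^{-iy.\xi}\,dy=\mathcal F\big((-x)^\alpha k\big)(\xi)=\big(\tfrac1i\partial_\xi\big)^\alpha\widehat k(\xi)$, where one uses $\mathcal F(x^\alpha k)=(i\partial_\xi)^\alpha\widehat k$ and $-i=\tfrac1i$. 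Pulling $p(x)$ (which commutes with $\partial_\xi$) inside, the integrand becomes exactly $\sum_\alpha\frac1{\alpha!}\big(\tfrac1i\partial_\xi\big)^\alpha symb(L)\,\partial_x^\alpha symb(D)$, i.e.\ $L(Df)=Mf$, which is the claim.

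As a sanity check one can run the two base cases: for $D=\partial_j$ one has $symb(D)=i\xi_j$ and $\partial_x^\alpha(i\xi_j)=0$ for $\alpha\neq0$, so $symb(L)\sharp symb(D)=i\xi_j\,symb(L)$, matching $L\partial_j f=p(x)\,\big((\partial_j k)\star f\big)$; for $D=x_j$ the commutator $k\star(x_jf)=x_j(k\star f)-(x_jk)\star f$ gives $symb(L\circ x_j)=x_j\,symb(L)-i\,\partial_{\xi_j}symb(L)=x_j\,symb(L)+\tfrac1i\partial_{\xi_j}symb(L)$, again matching \eqref{compsymb}. The only genuinely delicate point is justifying the interchange of the finite $\alpha$-sum with the $\xi$- and $y$-integrations and interpreting $k\star(\cdot)$ correctly when $k$ is only tempered; since the sum is finite and every step is an equality of tempered distributions, this is routine within the purely algebraic symbolic calculus of this section, and I would dispose of it with a single remark rather than a separate argument.
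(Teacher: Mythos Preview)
Your argument is correct and follows essentially the same route as the paper: write the operator via Fourier inversion, perform the shift in the convolution variable, apply the exact finite Taylor expansion of the relevant polynomial, and convert the resulting monomials into $\partial_\xi$-derivatives of $\widehat k$. The only organizational difference is that the paper first isolates the core case $L=K$ a pure convolution and $D=p(x)$ a pure multiplication (Taylor-expanding $p(y)$ at $x$), and then remarks that left multiplication by a polynomial and right composition with a constant-coefficient operator act on symbols by plain multiplication; you instead keep $L=p(x)K$ and $D\in\mathcal W(E)$ general from the outset and Taylor-expand $\sigma(D)(x-y,\xi)$ in its first slot, which absorbs that final reduction step into the main computation.
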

\begin{proof}
First assume that $L=K$ is a convolution operator by a tempered distribution $k$, and $D$ is the multiplicateionn operator by $p(x)$, where $p$ is a polynomial function on $E$. Then
\[(K\circ p)\varphi(x) = \int_{\mathbb R^N} k(x-y) p(y) \varphi(y) dy
\]
\[=(2\pi)^{-N} \int_{\mathbb R^N}\int_{\mathbb R^N}e^{i(x-y).\xi}\, \widehat k(\xi)\, p(y) \varphi(y)\, dy
\]
which by the change of variable $z=y-x$ becomes
\[(2\pi)^{-N} \int_{\mathbb R^N}\int_{\mathbb R^N}e^{-iz.\xi}\, \widehat k(\xi)\, p(x+z) \varphi(x+z)\  dz \, d\xi\ .
\]
Use the (exact) Taylor expansion for the polynomial $p$ at $x$ to get
\[= (2\pi)^{-N} \sum_\alpha \frac{1}{\alpha!}\partial_x^\alpha p\, (x)\int_{\mathbb R^N}\int_{\mathbb R^N}e^{-iz.\xi} \widehat k(\xi) z^\alpha \varphi(x+z)\, dz \,d\xi\ .
\]
Now use first
\[z^\alpha\int_{\mathbb R^N}e^{-i\xi.z} \widehat k(\xi) d\xi = \int e^{-iz.\xi} \left(\frac{1}{i} \partial_\xi\right)^\alpha \widehat k (\xi)\, d\xi
\]
to get
\[=(2\pi)^{-N} \sum_\alpha \frac{1}{\alpha!}\partial_x^\alpha p\, (x)\iint e^{-iz.\xi}{\left(\frac{1}{i} \partial_\xi\right)}^\alpha \widehat k (\xi)\varphi(x+z)\, dz\,d\xi.
\]
Now use
\[\int_{\mathbb R^N} e^{-iz.\xi} \varphi(x+z) \,dz = e^{ix.\xi} \,\widehat \varphi (\xi)
\]
to get
\[=(2\pi)^{-N} \sum_\alpha \frac{1}{\alpha!}\partial_x^\alpha p\, (x)\int e^{ix.\xi} {\left(\frac{1}{i} \partial^*\right)}^\alpha\widehat k (\xi)\widehat \varphi (\xi) d\xi,
\]
and the conclusion follows. The general case is an easy consequence of this special case,  as multiplication on the left by a polynomial function (resp. composition on the right with a constant coefficients differential operator) corresponds for the symbols to a multiplication by $p(x)$ (resp. to a multiplication by $\sigma(D)(\xi)$).

\end{proof}

In particular, the composition law for the symbols of two such differential operators coincides with the  classical formula in the Weyl algebra
\begin{equation}symb(D\circ F)  = d(x,\xi)\, \sharp\, f(x,\xi) = \sum_{\alpha} \frac{1}{\alpha!}\,{\left(\frac{1}{i} \partial_\xi\right)}^\alpha d(x,\xi)\, \partial^\alpha_x f(x,\xi) \ .\end{equation}

\subsection{The dual symbolic calculus on $E^*$}

The symbolic calculus has its dual (via the Fourier transform) version on $E^*$. 
The family $Op(E^*)$ is the family of finite combinations of multiplications by tempered distributions composed with constant coefficients differential operators on $E^*$, viewed as operators from $\mathcal S(E^*)$ into $\mathcal S'(E^*)$. The family $Op(E^*)$ is obtained from $Op(E)$ by transmutation by the Fourier transform. Any element of $Op(E^*)$ can be written in a unique way as
\[L = \sum_\alpha l_\alpha(\xi) {\partial_\xi}^\alpha
\]
where for each N-multiindex $\alpha$ $l_\alpha$ is a tempered distribution and $l_\alpha = 0$ but for a finite number of $\alpha$'s.

By definition, the symbol of the multiplication operator by the the tempered distribution $k$ on $E^*$ is $k(\xi)$. On the other hand, if $D$ is  a differential operator on $E^*$ with polynomial coefficients, its (usual) symbol is defined by is the polynomial function $\sigma^*(D)$ on $E^*\times E$ defined by 
\[D\, (e^{i\xi.x}) = \sigma^*(D)(\xi, x) \, e^{i \xi.x}\ .
\]
Notice that this definition implies that
\[\sigma^*{\left( \frac{1}{i}\partial^*_j\right)} (\xi, x) =  x_j\ .
\]

\begin{definition} Let $L = \sum_{\alpha} l_\alpha \partial_\xi^\alpha$. Its symbol is defined by
\[symb^*(L) (x,\xi) = l_\alpha(\xi) (ix)^\alpha
\]
\end{definition}
The space $Op(E^*)$ contains the Weyl algebra $\mathcal W(E^*)$ of differential operators on $E^*$ with polynomial coefficients and for $D \in \mathcal W(E)$, $symb^*(D)$
 coincides with its usual symbol $\sigma^*(D)$.
 
 The space $Op(E^*)$ is not a algebra, but some compositions are possible, and then the symbol of the composed operator is given by a formula similar to the composition formula already seen for $Op(E)$. If $L\in Op(E^*)$ and $D\in \mathcal W(E^*)$, then 
 $D\circ L$ is well defined as an operator from $\mathcal S(E^*)$ into $\mathcal S'(E^*)$.
 
\begin{proposition}\label{compDL}
 Let $L\in Op(E^*)$ and let $D\in \mathcal W(E^*)$.  Then
$D\circ L$ belongs to $Op(E^*)$ ad its symbol is given by 
\begin{equation}\label{symb*}
symb^*(D\circ L) := symb^*(D)\,\flat\,symb^*(L) = \sum_{\alpha} \frac{1}{\alpha!}\, \partial^\alpha_x symb^*( D) (\frac{1}{i}\partial_\xi)^\alpha symb^*(L)\,\ .
\end{equation}
\end{proposition}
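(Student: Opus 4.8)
The plan is to imitate the proof of Proposition~\ref{compKp}, interchanging the roles of multiplication and differentiation (hence of the variables $x$ and $\xi$, and of ``composition on the left'' and ``composition on the right''). One could instead deduce the statement from Proposition~\ref{compKp} by transmutation through the Fourier transform, but because $\mathcal F$ sends the normal form on $E$ to the anti-normal form on $E^*$, a direct computation is cleaner; this is the route I would take.

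First I would record the normal form of $D\circ L$. Write $D=\sum_\beta q_\beta(\xi)\,\partial_\xi^\beta\in\mathcal W(E^*)$ and $L=\sum_\alpha l_\alpha(\xi)\,\partial_\xi^\alpha\in Op(E^*)$, so that $symb^*(D)(x,\xi)=\sum_\beta q_\beta(\xi)(ix)^\beta$ and $symb^*(L)(x,\xi)=\sum_\alpha l_\alpha(\xi)(ix)^\alpha$. The Leibniz rule applied to $\partial_\xi^\beta\bigl(l_\alpha(\xi)\,g(\xi)\bigr)$ gives
\[
D\circ L=\sum_{\beta,\alpha}\ \sum_{\gamma\leq\beta}\binom{\beta}{\gamma}\,q_\beta(\xi)\,\bigl(\partial_\xi^\gamma l_\alpha(\xi)\bigr)\,\partial_\xi^{\,\beta-\gamma+\alpha}\,,
\]
a \emph{finite} linear combination of multiplications by tempered distributions $q_\beta\,\partial_\xi^\gamma l_\alpha$ followed by constant-coefficient differential operators; hence $D\circ L\in Op(E^*)$ (it is anyway well defined from $\mathcal S(E^*)$ into $\mathcal S'(E^*)$, as recalled before the statement). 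Collecting the terms carrying $\partial_\xi^\delta$, uniqueness of the normal form gives
\[
symb^*(D\circ L)(x,\xi)=\sum_\delta m_\delta(\xi)\,(ix)^\delta,\qquad m_\delta=\!\!\sum_{\substack{\beta-\gamma+\alpha=\delta\\ \gamma\leq\beta}}\!\binom{\beta}{\gamma}\,q_\beta\,\partial_\xi^\gamma l_\alpha\,.
\]

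Next I would expand the right-hand side $symb^*(D)\,\flat\,symb^*(L)$ from \eqref{symb*} and match it term by term with the expression above. In each summand of the $\flat$-product the operator $\partial_x^\gamma$ acts only on the factors $(ix)^\beta$ and $(\tfrac1i\partial_\xi)^\gamma$ only on the coefficients $l_\alpha(\xi)$; using
\[
\partial_x^\gamma (ix)^\beta=i^{|\gamma|}\,\tfrac{\beta!}{(\beta-\gamma)!}\,(ix)^{\beta-\gamma}\quad(\gamma\leq\beta),\qquad \partial_x^\gamma(ix)^\beta=0\ \text{otherwise},
\]
the term indexed by $(\gamma,\beta,\alpha)$ collapses, after cancellation of the powers of $i$, to $\binom{\beta}{\gamma}\,q_\beta\,(\partial_\xi^\gamma l_\alpha)\,(ix)^{\beta-\gamma+\alpha}$. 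Regrouping by $\delta=\beta-\gamma+\alpha$ reproduces exactly $\sum_\delta m_\delta(\xi)(ix)^\delta=symb^*(D\circ L)$, which is the asserted formula.

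I do not expect a genuine obstacle: the argument is formal manipulation of finite sums, every operator having only finitely many nonzero homogeneous components. The only points requiring care are (a) that the coefficients of the normal form, being products of polynomials with (derivatives of) tempered distributions, are again tempered distributions, so that $D\circ L$ really lies in $Op(E^*)$ and $symb^*$ applies; and (b) the index and $i$-power bookkeeping in the last step, where the identity $i^{|\beta|}x^{\beta-\gamma}=i^{|\gamma|}(ix)^{\beta-\gamma}$ must be used to make the combinatorial factors in the two expansions coincide — this is the natural place for a sign or factorial slip, so I would verify it explicitly before writing the final version.
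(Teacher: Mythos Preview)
Your proof is correct and follows essentially the same approach as the paper: both compute the normal form of $D\circ L$ via the Leibniz rule and identify it with the $\flat$-expansion. The only difference is organizational---the paper first reduces to the special case where $L$ is pure multiplication by $k(\xi)$ and $D=(\tfrac{1}{i}\partial_{*j})^{\beta_j}$ is a one-variable constant-coefficient operator, then appeals to bilinearity and the module property for the general case, whereas you carry out the full multi-index computation directly; your version is slightly more self-contained at the cost of heavier index bookkeeping.
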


\begin{proof} First assume that $L$ is a multiplication operator by a tempered distribution $k(\xi)$ and $D$ is  a constant coefficient differential operator, with symbol equal to a polynomial $p(x)$.
As both sides of \eqref{symb*} are linear in $p$, we may even assume that $p$ is a monomial, say $p(x) = x^\beta$ for some multi-index $\alpha$. As $\big(p(x)q(x)\big)\,\flat\, k(\xi) = p(x)\,\flat\,\big(q(x)\,\flat\,k(\xi)\big)$ for 
$p,q\in \mathcal P(E)$, it suffices to prove the formula for $p(x) = x_j^{\beta_j}$, or equivalently for $D=({\frac{1}{i}\partial_j^*})^{\beta_j}$. In this case, let $\varphi\in \mathcal S(E^*)$. Then
\[
({i\partial_j^*})^{\beta_j} (k(\xi) \varphi(\xi))=
\sum_{\alpha_j=0}^{\beta_j} \begin{pmatrix}\beta_j\\ \alpha_j \end{pmatrix} ({\frac{1}{i}\partial_j^*})^{\alpha_j} k(\xi) ({\frac{1}{i}\partial_j^*})^{\beta_j-\alpha_j}\varphi(\xi)\ .
\]
Now use
\[ symb^*\big({(\frac{1}{i}\partial_j^*)}^{\beta_j-\alpha_j}\big) =x_j^{\beta_j-\alpha_j} = \frac{(\beta_j-\alpha_j)!}{\beta_j!}\]

to get
\[symb^*(x^{\beta_j}\circ k(\xi)) = x^{\beta_j} \, \flat\, k(\xi) = \sum_{\alpha_j} \frac{1}{\alpha_j!}({\frac{1}{i}\partial_j^*})^{\alpha_j} k(\xi)\partial_j^{\alpha_j} x_j^{\beta_j}\ .
\]
This proves \eqref{symb*} for the case where $L$ is a multiplication operator by a tempered distribution and $D$ is a constant coefficient differential operator. The general case follows easily.
\end{proof}
Again, the space $Op(E^*)$ contains the space of differential operators with polynomial coefficients on $E^*$ the Weyl algebra $\mathcal W(E^*)$, and the symbol of such an operator coincides with its usual definition. Also the composition formula for the symbol of two differential operators with polynomials coefficients is given by 
\[c(\xi,x) \,\flat\, d(\xi,x) = \sum_\alpha\frac{1}{\alpha!}\, \partial_x^\alpha c(\xi,x) \, ({\frac{1}{i}\partial_\xi})^\alpha d(\xi,x)\ .
\]
\begin{proposition}\label{x=0}
Let $D$ be a differential operator on $E^*$ with polynomial coefficients, let $d^*(\xi,x)$ be its symbol and let $p$ be a polynomial function on $E^*$. Let $c^*(\xi,x) =d^*(\xi,x)\,\flat\, p(\xi) $.
Then
\[c(\xi, 0) = Dp(\xi)\ .\]
\end{proposition}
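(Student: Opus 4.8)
The plan is to recognise $c^*(\xi,x)$ as the symbol $symb^*(D\circ p)$ of a composed operator, and then to read off its value at $x=0$ as the order-zero (pure multiplication) part of that operator. First I would view the polynomial $p$ as the multiplication operator $\varphi\mapsto p(\xi)\varphi(\xi)$ on $\mathcal S(E^*)$; this is a differential operator of order $0$ with polynomial coefficient, hence lies in $\mathcal W(E^*)\subset Op(E^*)$, and from $p\cdot e^{i\xi.x}=p(\xi)e^{i\xi.x}$ one gets $symb^*(p)=p(\xi)$, with no dependence on $x$. Since $D\in\mathcal W(E^*)$, Proposition~\ref{compDL} applies and gives $D\circ p\in Op(E^*)$ with $symb^*(D\circ p)=d^*(\xi,x)\,\flat\,p(\xi)=c^*(\xi,x)$.

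Next I would compute the right-hand side of \eqref{symb*} at $x=0$. Write $D=\sum_\beta q_\beta(\xi)\,\partial_\xi^\beta$, a finite sum with polynomial $q_\beta$. The defining relation $D(e^{i\xi.x})=d^*(\xi,x)\,e^{i\xi.x}$ forces $d^*(\xi,x)=\sum_\beta q_\beta(\xi)(ix)^\beta$, a polynomial in $x$; hence, since $\partial_x^\alpha(ix)^\beta\big|_{x=0}=i^{|\alpha|}\,\alpha!$ when $\alpha=\beta$ and $0$ otherwise, one has $\partial_x^\alpha d^*(\xi,x)\big|_{x=0}=i^{|\alpha|}\,\alpha!\,q_\alpha(\xi)$. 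Plugging this into
\[
c^*(\xi,x)=d^*(\xi,x)\,\flat\,p(\xi)=\sum_\alpha\frac{1}{\alpha!}\,\partial_x^\alpha d^*(\xi,x)\,\Big(\frac1i\partial_\xi\Big)^\alpha p(\xi)
\]
and setting $x=0$, the factors $i^{|\alpha|}$ and $i^{-|\alpha|}$ cancel and the sum collapses to
\[
c^*(\xi,0)=\sum_\alpha q_\alpha(\xi)\,\partial_\xi^\alpha p(\xi)=Dp(\xi),
\]
which is the claim (the statement's $c$ being $c^*$).

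The same conclusion can be seen conceptually, without computation: if $D\circ p=\sum_\alpha l_\alpha(\xi)\,\partial_\xi^\alpha$ is the canonical expansion, then by the definition of $symb^*$ one has $symb^*(D\circ p)=\sum_\alpha l_\alpha(\xi)(ix)^\alpha$, so $c^*(\xi,0)=l_0(\xi)$; and collecting in $D(p\,\varphi)$ the terms in which no derivative falls on $\varphi$ gives, by the Leibniz rule, exactly $\big(\sum_\beta q_\beta\,\partial_\xi^\beta p\big)\varphi=(Dp)\,\varphi$, so $l_0=Dp$. I do not expect any genuine obstacle here: the only point needing care is the bookkeeping of the powers of $i$ in the $\flat$-composition, and it is precisely this bookkeeping that makes the sum telescope to $Dp(\xi)$.
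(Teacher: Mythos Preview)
Your proof is correct and follows essentially the same route as the paper's: both evaluate the $\flat$-composition formula \eqref{symb*} at $x=0$ and use that $\partial_x^\alpha(x^\beta)\big|_{x=0}$ vanishes unless $\alpha=\beta$, collapsing the sum to $Dp(\xi)$. The only cosmetic difference is that the paper first reduces by linearity to the monomial case $D=\xi^\gamma(\tfrac{1}{i}\partial_\xi)^\delta$, whereas you keep a general expansion $D=\sum_\beta q_\beta(\xi)\partial_\xi^\beta$ and carry the powers of $i$ through explicitly; your additional conceptual paragraph via the Leibniz rule is a pleasant extra but not a different argument.
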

\begin{proof}
By linearity, it is sufficient to prove the result when $D= \xi^\gamma (\frac{1}{i}\partial_\xi)^\delta$ for $\gamma, \delta$ arbitrary $N$-multiindices. Then  $d^*(\xi,x) = \xi^\gamma x^\delta$ so that
\[c(\xi,0) = \big(\xi^\gamma x^\delta\,\flat\, p(\xi)\big)(\xi,0)  = \sum_{\alpha!} \frac{1}{\alpha} \xi^\gamma \partial_x^\alpha (x^\delta)(0) (\frac{1}{i}\partial_\xi)^\alpha p(\xi)\ .
\]
But $\partial_x^\alpha (x^\delta)(0)=0$ unless $\alpha = \delta$, in which case
\[\partial_x^\delta (x^\delta)(0) = \delta! 
\]
so that
\[c(\xi,0)  = \xi^\gamma (\frac{1}{i}\partial_\xi)^\delta p(\xi) = Dp(\xi)\ .
\]
\end{proof}

\begin{proposition}\label{compduality}
 Let $e(x,\xi)$ be a symbol on $V\times V^*$ and $d(x,\xi)$ be a polynomial function on $E\times E^*$. Let
\[e^*(\xi,x) = e(x,\xi),\qquad d^*(\xi, x) = d(x,\xi)\ .
\] 
Then
\begin{equation}\label{duality}
e(x,\xi)\, \sharp \,d(x,\xi) = d^*(\xi, x) \,\flat \,e^*(\xi, x) \ .
 \end{equation}
 The same result  is valid when $e(x,\xi)$ is a polynomial and $d(x,\xi)$ is a symbol.
\end{proposition}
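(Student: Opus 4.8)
The plan is to reduce the identity \eqref{duality} to a purely formal equality between two explicit finite sums, exploiting that both composition laws are given by closed formulas. By Proposition~\ref{compKp} (and the composition formula displayed after it), $e\,\sharp\,d=\sum_\alpha\frac1{\alpha!}(\frac1i\partial_\xi)^\alpha e\cdot\partial_x^\alpha d$, where the left factor $e$ carries the $\xi$-derivatives and the right factor $d$ the $x$-derivatives; by Proposition~\ref{compDL} (and the corresponding displayed formula), $c\,\flat\,d=\sum_\alpha\frac1{\alpha!}\partial_x^\alpha c\cdot(\frac1i\partial_\xi)^\alpha d$, where now the left factor carries the $x$-derivatives and the right factor the $\xi$-derivatives. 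The one substantive remark is that the slot-exchange $f(x,\xi)\mapsto f^*(\xi,x):=f(x,\xi)$ is a mere relabelling: differentiation with respect to the variable named $x$ (resp.\ named $\xi$) commutes with it, so that $\partial_x^\alpha d^*(\xi,x)=(\partial_x^\alpha d)(x,\xi)$ and $\partial_\xi^\alpha e^*(\xi,x)=(\partial_\xi^\alpha e)(x,\xi)$ for every multiindex $\alpha$.

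With these ingredients the computation is immediate. First I would write out $d^*\,\flat\,e^*$ from the $\flat$-formula with left factor $d^*$ and right factor $e^*$, then substitute the two relations above to turn it into $\sum_\alpha\frac1{\alpha!}(\partial_x^\alpha d)(x,\xi)\,\big((\tfrac1i\partial_\xi)^\alpha e\big)(x,\xi)$; since pointwise multiplication is commutative, this sum is exactly the one defining $e\,\sharp\,d$. Note that the two composition formulas differ precisely by which factor receives $\partial_x$ and which receives $\partial_\xi$, so reversing the order of the two factors (writing $d^*$ before $e^*$, as in the statement) is exactly what makes them coincide. This settles the first assertion; the second one, with $e$ a polynomial and $d$ a symbol, follows by the identical manipulation, since the computation never uses which of $e,d$ is the polynomial.

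The only point needing a little care --- and the nearest thing to an obstacle here --- is to check that the composition formulas of Propositions~\ref{compKp} and~\ref{compDL} are legitimately applicable in both asserted cases, i.e.\ that the sums over $\alpha$ are finite, since those propositions were stated with a fixed allotment of ``symbol'' versus ``polynomial'' to the two factors. This is handled by recalling that a symbol on $E\times E^*$ is always polynomial in $x$ (so $e^*,d^*$ are polynomial in $\xi$), while a polynomial is polynomial in every variable; hence in each of the two cases at least one of the factors appearing in $e\,\sharp\,d$, namely $(\tfrac1i\partial_\xi)^\alpha e$ or $\partial_x^\alpha d$ (respectively $\partial_x^\alpha d^*$ or $(\tfrac1i\partial_\xi)^\alpha e^*$ in $d^*\,\flat\,e^*$), vanishes as soon as $|\alpha|$ is large. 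Thus both sides are well-defined finite sums and the formal identity above is all that is needed; finally, both sides being expressed through the variables $x$ and $\xi$, their equality is read off directly, with no further identification required.
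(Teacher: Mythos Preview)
Your proof is correct and follows exactly the paper's approach: the paper's own proof is a single sentence stating that the identity is a consequence of the two composition formulas \eqref{compsymb} and \eqref{symb*}, and what you have written is simply the detailed expansion of that comparison, including the (welcome) observation that the sums are finite.
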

\begin{proof}
The identity  \eqref{duality} is a consequence of the two composition formulas for the symbols \eqref{compsymb} and \eqref{symb*}.
\end{proof}
\section{The source operator on a real simple Jordan algebra}

Let $V$ be a simple real Jordan algebra, of dimension $n$ and rank $r$. Recall that the \emph{conformal group} of $V$ is the group generated by the translations, the structure group of $V$ and the inversion. In \cite{bck} was introduced  a group $G$ which is locally isomorphic to the conformal group (a twofold covering of the proper conformal group, see details in Section 7 of \cite{bck}). Let $P$ be the parabolic subgroup of $G$ corresponding to the conformal affine maps (generated by the translations and the structure group). The unipotent radical $N$ of $P$ can be identified with $V$. Let $\overline P$ be the opposite parabolic subgroup to $P$. Then $G/\overline P$ is the \emph{conformal completion} of $V$ and the map $V\simeq N \longrightarrow G/\overline P$ gives the imbedding of $V$ in its completion.

The characters of $P$ are described by $(\lambda, \epsilon)\in \mathbb C\times \{\pm\}$, and for each $(\lambda, \epsilon)$ there is an induced representation (belonging to the degenerate non-unitary scalar principal series) $\widetilde \pi_{\lambda, \epsilon}$ of $G$ acting on the space $\mathcal H_{(\lambda,\epsilon)}$ of smooth sections of a line bundle over $X$. The \emph{Knapp-Stein operators} are a meromorphic (w.r.t. $\lambda$) family of operators 
\[\widetilde I_{\lambda, \epsilon} : \mathcal H_{\lambda, \epsilon} \longrightarrow \mathcal H_{\frac{2n}{r} -\lambda, \epsilon} 
\]
of intertwining operators, i.e. they satisfy for any $g\in G$
\begin{equation}
\widetilde I_{\lambda, \epsilon} \circ \widetilde\pi_{\lambda, \epsilon}(g) =\widetilde \pi_{\frac{2n}{r}-\lambda, \epsilon}(g)\circ \widetilde I_{\lambda,\epsilon} \ .
\end{equation}

There is another realization of the representations $\pi_{\lambda, \epsilon}$, called the \emph{ noncompact picture}, obtained by using the embedding $V\longrightarrow X$. The action of $G$ on $V$ is a rational (not everywhere defined) action, the space considered for the representation is the space of smooth functions $C^\infty(V)$ and the action can be explicitly written in this  ralization.

Recall the following notation : for $s\in \mathbb C, \epsilon\in \{\pm\}$ and for $t\in \mathbb R^*$,
\[t^{s,\epsilon} = \left\{\begin{matrix} \vert t\vert^s &\quad \text{if } \epsilon = +\\
\sgn(t) \vert t\vert^s&\quad \text{if } \epsilon = -
\end{matrix}\right.\qquad .
\]
Keeping same notation as above, the representation $\pi_{\lambda, \epsilon}$ is given by
\begin{equation}
\pi_{\lambda, \epsilon}(g) f (x) = a(g^{-1},x)^{-\lambda, \epsilon} f\big(g^{-1}(x)\big)
\end{equation}
where $a$ is a smooth cocycle on $G\times V$, and $a\big(g^{-1},x\big)=0$ precisely when $g^{-1}$ is not defined at $x$.

The Knapp-Stein intertwining operators can also be transferred and they are given (up to a scalar) by
\begin{equation}\label{defKS}
I_{\lambda, \epsilon} f(x) = \int_V \det(x-y)^{-\frac{2n}{r} +\lambda, \epsilon} f(y) dy\ ,
\end{equation}
where $\det$ is the \emph{determinant} polynomial of $V$.

The Knapp-Stein operators are convolution operators, and the family $\det(x)^{s\epsilon}$ which is well defined for $\Re(s) >>0$ is extended by analytic continuation to a meromorphic family of tempered distributions (see more details in \cite{bck}). The following result plays an important r\^ole in the sequel. 
\begin{proposition} The Fourier  transform of the kernel of the intertwining operator $k_{\lambda, \epsilon}(\xi)$  is given by
 \begin{equation}\label{Fintw}
 \begin{split}
  k_{\lambda, \epsilon}(\xi)& =\mathcal F\left( \det x^{-\frac{2n}{r}+\lambda, \epsilon}\right)(\xi) \\ &=\kappa_+(\lambda, \epsilon) \det \xi^{\frac{n}{r}-\lambda,+} +\kappa_-(\lambda, \epsilon) \det \zeta^{\frac{n}{r}-\lambda, -},
  \end{split}
 \end{equation}
 where $\kappa_\pm(\lambda)$ are meromorphic functions  on $\mathbb C$, not both identically $0$.
 \end{proposition}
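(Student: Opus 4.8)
The plan is to recognize $k_{\lambda,\epsilon}$ as the Fourier multiplier of the Knapp--Stein operator and to pin it down from its equivariance together with a homogeneity count, and then to see which of the finitely many a priori possible terms actually survive. Since $I_{\lambda,\epsilon}$ is the convolution operator with kernel $\det(x)^{-\frac{2n}{r}+\lambda,\epsilon}$, under $\mathcal F$ it becomes multiplication by $k_{\lambda,\epsilon}(\xi)=\mathcal F\big(\det(x)^{-\frac{2n}{r}+\lambda,\epsilon}\big)(\xi)$, so everything reduces to showing that this multiplier has the stated form. For $\Re\lambda>\frac{2n}{r}-1$ the kernel is locally integrable and of polynomial growth, hence a tempered distribution, and it extends to a meromorphic family of tempered distributions because $\det$ carries a Bernstein--Sato polynomial --- for a simple real Jordan algebra a product of shifted linear factors, whose associated $\Gamma$-factors (the Gindikin $\Gamma$-function of $V$) will ultimately provide $\kappa_\pm$.

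Write $L_0$ for the identity component of the structure group of $V$. Then $\det(\ell\cdot x)=\chi(\ell)\det(x)$ for a character $\chi$ of $L_0$, necessarily positive since $L_0$ is connected, so $\det(x)^{-\frac{2n}{r}+\lambda,\epsilon}$ is a relative invariant of $L_0$. Since $\mathcal F$ conjugates the $\mathrm{GL}(V)$-action on $\mathcal S'(V)$ into the contragredient action on $\mathcal S'(V^*)$ --- which, after identifying $V^*$ with $V$ via the trace form, is again an action of the structure group of $V$ (by its self-adjointness), with basic relative invariant $\det\xi$ --- the multiplier $k_{\lambda,\epsilon}$ is an $L_0$-relative invariant on $V^*$, and the dilations $x\mapsto tx$, which lie in $L_0$, show that it is homogeneous of degree $-r\big(-\frac{2n}{r}+\lambda\big)-n=r\big(\frac{n}{r}-\lambda\big)$. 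On the dense open set $\{\det\xi\neq0\}$, which is a union of finitely many open $L_0$-orbits, a homogeneous relative invariant of this character restricts on each orbit to a constant multiple of $|\det\xi|^{\frac{n}{r}-\lambda}$; so a priori $k_{\lambda,\epsilon}$ is a combination of such terms with one constant per orbit, and one checks separately that it has no component supported on $\{\det\xi=0\}$ (for generic $\lambda$ by a homogeneity-degree argument, or from the functional equation $I_{\frac{2n}{r}-\lambda,\epsilon}\circ I_{\lambda,\epsilon}=\mathrm{const}\cdot\mathrm{Id}$, which on the Fourier side reads $k_{\frac{2n}{r}-\lambda,\epsilon}\,k_{\lambda,\epsilon}=\mathrm{const}$, the remaining $\lambda$ following by meromorphic continuation).

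What is still needed --- and this is the main obstacle --- is that the per-orbit constants depend only on $\sgn(\det\xi)$, so that the combination collapses to
\[k_{\lambda,\epsilon}(\xi)=\kappa_+(\lambda,\epsilon)\,\det\xi^{\frac{n}{r}-\lambda,+}+\kappa_-(\lambda,\epsilon)\,\det\xi^{\frac{n}{r}-\lambda,-}.\]
This is exactly the point where the delicate Fourier analysis of \cite{bck} is required: the equivariance under the parabolic $P$ alone leaves a space of admissible multipliers of dimension equal to the number of open orbits, and the extra symmetry needed to cut it down to the two-dimensional span of $\det\xi^{\frac{n}{r}-\lambda,\pm}$ comes from the full conformal group $G$ --- concretely from the Jordan inversion $x\mapsto-x^{-1}$ (in the lowest-rank cases this is visible already as a parity statement). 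An equivalent route is to decompose $\det(x)^{-\frac{2n}{r}+\lambda,\epsilon}$ into the Riesz distributions carried by the individual $L_0$-orbits, to compute the Fourier transform of each (again carried, up to a $\Gamma$-factor, by the appropriate dual orbit with a power of $\det\xi$), and to verify that these $\Gamma$-factors recombine across orbits. Either way, $\kappa_\pm(\lambda,\epsilon)$ come out as products and ratios of the $\Gamma$-factors above, hence are meromorphic on $\mathbb C$, and they are not both identically zero because $k_{\lambda,\epsilon}$ is the Fourier transform of a nonzero distribution (equivalently $I_{\lambda,\epsilon}\neq0$).
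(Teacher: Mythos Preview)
The paper does not give a self-contained proof of this proposition: it simply records that the result was obtained in \cite{bck}, Section~5, where the explicit values of $\kappa_\pm(\lambda,\epsilon)$ are computed. Your proposal therefore already contains more than the paper itself offers, and your explicit deferral to \cite{bck} at the crucial step is in the same spirit as the paper's own treatment.

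Your sketch of the surrounding structure is sound: equivariance under the identity component $L_0$ of the structure group, together with the homogeneity count, does force $k_{\lambda,\epsilon}$ to restrict to a constant multiple of $|\det\xi|^{\frac{n}{r}-\lambda}$ on each open $L_0$-orbit in $\{\det\xi\neq 0\}$, and you correctly isolate the genuine difficulty --- collapsing the per-orbit constants to a dependence on $\sgn(\det\xi)$ alone. One caveat: invoking the Jordan inversion (i.e.\ the full conformal group $G$) as the mechanism for this collapse is heuristic at best. The computation of the Fourier transform of $\det(x)^{s,\epsilon}$ on a simple real Jordan algebra is a result of harmonic analysis on $V$ itself, carried out in \cite{bck} essentially along your ``equivalent route'' via Riesz-type distributions and explicit Gamma factors, case by case over the classification; the conformal group does not intervene at this stage but only later, in the construction and covariance of the source operator. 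So of the two explanations you offer for the key step, the second is the operative one.
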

 This result was obtained in \cite{bck} Section 5, where the explicit values of $\kappa_\pm(\lambda, \epsilon)$ can be found.

Consider now $(\lambda, \epsilon), (\mu,\eta)\in \mathbb C\times \{ \pm \}$ and form the tensor product $\widetilde \pi_{\lambda, \epsilon}\otimes \widetilde \pi_{\mu, \eta}$. In the compact realization, the  space of the representation (after completion)  is $\mathcal H_{(\lambda, \epsilon),(\mu,\eta)}$, the space of smooth sections of a line bundle over $X\times X$. Let $\pi_{\lambda, \epsilon}\otimes \pi_{\mu, \eta}$ be the corresponding tensor product representation in the non-compact picture, actin on  the space $C^\infty(V\times V)$.

Denote by $M$ the operator on $C^\infty(V\times V)$ given by
\begin{equation}
f\in C^\infty(V\times V) ,\qquad Mf(x,y) = \det(x-y) f(x,y)\ .
\end{equation}
The operator $M$ satisfies a covariance relation,
\[M\circ \big(\pi_{\lambda, \epsilon}(g)\otimes \pi_{\mu, \eta}(g)\big)= \big(\pi_{\lambda-1, -\epsilon}(g)\otimes \pi_{\mu-1, -\eta}(g)\big)\circ M\ .
\]
Among other things, this relation shows that $M$ can be lifted to the compact picture model, namely as an operator $\widetilde M$
\[\widetilde M : \mathcal H_{(\lambda, \epsilon), (\mu, \eta)}\longrightarrow \mathcal H_{(\lambda-1,-\epsilon), (\mu-1,-\eta)}\ ,\]
and intertwining $\widetilde \pi_{\lambda, \epsilon} \otimes \widetilde \pi_{\mu, \eta}$ and $\widetilde \pi_{\lambda-1,-\epsilon} \otimes \pi_{\mu-1,-\eta}$
Strictly speaking, the operator now depends on  the parameters of the representations, but its local expression in the noncompact picture does not.

On the compact realization, now define
\begin{equation}\label{defF}
\widetilde F_{(\lambda, \epsilon), (\mu, \eta)}= \left(\widetilde I_{\frac{2n}{r}-1-\lambda, \epsilon} \otimes \widetilde I_{\frac{2n}{r}-1-\mu, \eta}\right) \circ \widetilde M\circ \left(
\widetilde I_{\lambda, \epsilon} \otimes \widetilde I_{\mu, \eta}\right)\ .
\end{equation}

The main result of \cite{bck} is
\begin{theorem} The operators $\widetilde F_{(\lambda,\epsilon),(\mu, \eta)}$ form a meromorphic (in $(\lambda, \mu)$) family of \emph{differential} operators on $X\times X$, which satisfy the covariance relation, valid for any $g\in G$
\begin{equation}
\widetilde F_{(\lambda,\epsilon),(\mu, \eta)}\circ \left(\widetilde \pi_{\lambda, \epsilon}(g)\otimes \widetilde\pi_{\mu, \eta(g)}\right)=  \left(\widetilde\pi_{\lambda+1, -\epsilon}(g)\otimes \widetilde\pi_{\mu+1, -\eta}(g)\right)\circ \widetilde F_{(\lambda,\epsilon),(\mu, \eta} \ .
\end{equation}
\end{theorem}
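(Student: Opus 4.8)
The plan is to establish the two claims of the theorem — that $\widetilde F_{(\lambda,\epsilon),(\mu,\eta)}$ is a \emph{differential} operator, and that it satisfies the stated covariance relation — by exploiting the factored definition \eqref{defF} of $\widetilde F$ as a composition of Knapp--Stein operators, the multiplication operator $\widetilde M$, and a second pair of Knapp--Stein operators. The covariance relation is the easy part: it follows formally by chaining the three intertwining relations. Precisely, $\widetilde I_{\lambda,\epsilon}\otimes \widetilde I_{\mu,\eta}$ intertwines $\widetilde\pi_{\lambda,\epsilon}\otimes\widetilde\pi_{\mu,\eta}$ with $\widetilde\pi_{\frac{2n}{r}-\lambda,\epsilon}\otimes\widetilde\pi_{\frac{2n}{r}-\mu,\eta}$; then $\widetilde M$ intertwines the latter with $\widetilde\pi_{\frac{2n}{r}-\lambda-1,-\epsilon}\otimes\widetilde\pi_{\frac{2n}{r}-\mu-1,-\eta}$ (this is the covariance relation for $M$ recalled just above, with $\lambda,\mu$ replaced by $\frac{2n}{r}-\lambda,\frac{2n}{r}-\mu$); and finally $\widetilde I_{\frac{2n}{r}-1-\lambda,\epsilon}\otimes \widetilde I_{\frac{2n}{r}-1-\mu,\eta}$ intertwines that with $\widetilde\pi_{\lambda+1,-\epsilon}\otimes\widetilde\pi_{\mu+1,-\eta}$, using the Knapp--Stein relation at parameter $\frac{2n}{r}-1-\lambda$ (note $\frac{2n}{r}-(\frac{2n}{r}-1-\lambda)=\lambda+1$) and that the sign $\epsilon$ is unchanged by the Knapp--Stein operator. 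Composing, $\widetilde F$ intertwines $\widetilde\pi_{\lambda,\epsilon}\otimes\widetilde\pi_{\mu,\eta}$ with $\widetilde\pi_{\lambda+1,-\epsilon}\otimes\widetilde\pi_{\mu+1,-\eta}$, which is exactly the claimed relation. Meromorphy in $(\lambda,\mu)$ follows from the meromorphy of each Knapp--Stein family.

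The substantive part is to show that $\widetilde F_{(\lambda,\epsilon),(\mu,\eta)}$ is a \emph{differential} operator rather than merely a pseudo-differential or integral one. I would work in the noncompact picture, where by \eqref{defKS} each Knapp--Stein operator is a convolution operator with kernel $\det(x-y)^{-\frac{2n}{r}+\lambda,\epsilon}$ (tensored over the two factors $V\times V$), and $M$ is multiplication by $\det(x-y)$. On the Fourier side, using Proposition stated above, convolution by the Knapp--Stein kernel becomes multiplication by the symbol $k_{\lambda,\epsilon}(\xi)$, a linear combination of $\det\xi^{\frac{n}{r}-\lambda,\pm}$. Thus, passing to the Fourier transform in both variables of $V\times V$, the composite $\left(\widetilde I_{\frac{2n}{r}-1-\lambda,\epsilon}\otimes\widetilde I_{\frac{2n}{r}-1-\mu,\eta}\right)\circ\widetilde M\circ\left(\widetilde I_{\lambda,\epsilon}\otimes\widetilde I_{\mu,\eta}\right)$ becomes: multiply by $k_{\lambda,\epsilon}(\xi)k_{\mu,\eta}(\zeta)$, then apply the constant-coefficient differential operator $\det\!\left(\frac{1}{i}\partial_\xi-\frac{1}{i}\partial_\zeta\right)$ coming from $\widehat{\det(x-y)f}$, then multiply by $k_{\frac{2n}{r}-1-\lambda,\epsilon}(\xi)k_{\frac{2n}{r}-1-\mu,\eta}(\zeta)$. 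The claim reduces to showing this operator on the Fourier side is multiplication by a polynomial in $(\xi,\zeta)$; equivalently, that all the singular factors from the $\det(\cdot)^{\bullet}$ powers cancel. Here one uses the functional equation relating $k_{\lambda,\epsilon}$ and $k_{\frac{2n}{r}-\lambda,\epsilon}$ (the Knapp--Stein operators at conjugate parameters compose to a scalar, i.e. $\det\xi^{s}\cdot\det\xi^{-s}$ up to constants), so that $k_{\frac{2n}{r}-1-\lambda,\epsilon}(\xi)k_{\lambda,\epsilon}(\xi)$ is, up to a meromorphic constant, a power $\det\xi$ to a nonnegative integer — in fact $\det\xi$ to the first power when the difference of the two exponents is $1$, matching the single factor $\det\xi$ produced when the differential operator $\det(\partial_\xi-\partial_\zeta)$ hits the kernels. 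The key identity to isolate and prove is that $\det\!\left(\frac{1}{i}\partial_\xi-\frac{1}{i}\partial_\zeta\right)$ applied to $k_{\lambda,\epsilon}(\xi)k_{\mu,\eta}(\zeta)$, divided by $k_{\frac{2n}{r}-1-\lambda,\epsilon}(\xi)^{-1}k_{\frac{2n}{r}-1-\mu,\eta}(\zeta)^{-1}$, is a polynomial — this is precisely the polynomial $c^{(1)}_{\lambda-\frac{n}{r},\mu-\frac{n}{r}}$ of \eqref{defpol}, and the Rodrigues-type identity \eqref{defpol} with $k=1$ is the algebraic heart of the matter.

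The main obstacle I anticipate is making rigorous the manipulation of the distributions $\det\xi^{s,\pm}$ near their poles and the interchange of the differential operator with these meromorphically continued distributions: the factors $k_{\lambda,\epsilon}$ have poles in $\lambda$, and one must check that after all cancellations the result is genuinely a polynomial-coefficient (here constant-coefficient, by the translation argument) differential operator with \emph{holomorphic} — or at worst meromorphic with controlled poles — dependence on $(\lambda,\mu)$, valid as an identity of operators on $C^\infty$ and not just as a formal symbol identity. The cleanest route is to verify \eqref{defpol} directly for $k=1$ by a computation with the Bernstein--Sato type functional equation for $\det$ (the operator $\det(\partial_x)$ applied to powers of $\det x$), transfer it to the pair of variables via the change of variables adapted to the diagonal, and then invoke the dual symbolic calculus of Section 1 (Propositions \ref{compKp}, \ref{compDL}, \ref{compduality}) to legitimize reading off the composite operator's symbol as a product of symbols in normal order. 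Once $k=1$ is settled, the operator $\widetilde F$ \emph{is} that differential operator, and the higher $B^{(k)}$ follow by iteration as in \eqref{defB}; but that iteration belongs to the later steps of the program, not to this theorem.
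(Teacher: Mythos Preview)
Your treatment of the covariance relation is correct and matches the paper exactly: as the text remarks immediately after the statement, ``the covariance property is an obvious consequence of the definition,'' obtained by chaining the three intertwining relations just as you describe. Likewise, meromorphy is inherited from the Knapp--Stein factors. Note also that the present paper does not give a self-contained proof that $\widetilde F$ is differential; this is the main result of \cite{bck} and is only recalled here, so your proposal should be compared against the mechanism of \cite{bck} as summarized in Section~2 (especially Proposition~\ref{defDst}).

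There is, however, a genuine conceptual slip in your sketch of the ``differential'' part. You assert that on the Fourier side the composite must be \emph{multiplication by a polynomial} in $(\xi,\zeta)$, and you identify that polynomial with $c^{(1)}_{\lambda-\frac{n}{r},\mu-\frac{n}{r}}$. But $F_{\lambda,\mu}$ is \emph{not} a constant-coefficient operator: its coefficients depend polynomially on $x-y$ (Proposition~2.7\,$ii)$), so its Fourier conjugate is not a multiplication operator but a differential operator in $(\xi,\zeta)$ with polynomial coefficients. Concretely, the key identity from \cite{bck} is Proposition~\ref{defDst}:
\[
\det\!\left(\tfrac{1}{i}(\partial_\xi-\partial_\zeta)\right)\circ\big({\det}^*\xi^{\,s,\epsilon}{\det}^*\zeta^{\,t,\eta}\big)=\big({\det}^*\xi^{\,s-1,-\epsilon}{\det}^*\zeta^{\,t-1,-\eta}\big)\circ D_{s,t},
\]
with $D_{s,t}$ a polynomial-coefficient differential operator on $V^*\times V^*$, not a multiplication. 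After sandwiching with the outer Knapp--Stein symbols the powers of ${\det}^*$ cancel (your index check here is right), leaving precisely $D_{s,t}$; its $symb^*$ is $d_{s,t}(\xi,\zeta,x,y)$, which by \eqref{fstar=d} equals (up to a meromorphic scalar) $f^*_{\lambda,\mu}(\xi,\zeta,x,y)$. The polynomial $c^{(1)}_{\lambda-\frac{n}{r},\mu-\frac{n}{r}}$ enters only \emph{after} one restricts to the diagonal: it is the symbol of $B^{(1)}_{\lambda,\mu}=\res\circ F_{\lambda,\mu}$, obtained by evaluating $f_{\lambda,\mu}$ at $x=y=0$ (Proposition~3.3). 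So your outline --- noncompact picture, Fourier side, Bernstein--Sato type cancellation --- is the right one and is indeed the route taken in \cite{bck}, but the target object on the Fourier side is the \emph{operator} $D_{s,t}$, not the polynomial $c^{(1)}$; conflating the two would make the argument collapse, since ``multiplication by $c^{(1)}$'' simply is not what the composite equals.
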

Notice that the covariance property is an obvious consequence of the definition, whereas the proof that it is a differential operator requires much work. 

The operator $\widetilde F_{(\lambda, \epsilon),(\mu, \eta)}$ has a local expression in the noncompact picture, which turns out to be independent of the signs $\epsilon, \eta$, and is a differential operator on $V\times V$ with polynomial coefficients, henceforth denoted by $F_{\lambda, \mu}$. In the actual proof in \cite{bck}, the operator $F_{\lambda,\mu}$ is constructed in the noncompact picture and shown to be covariant, which guarantees its lifting to the compact picture. 

Notice however that it is not possible to use the analog of the composition formula \eqref{defF} directly on $V\times V$ to define $F_{\lambda, \mu}$, as it is \emph{not} possible to find a reasonable functional space on $V\times V$ which would be stable by the three operators concerned. However as $M$ preserves both $\mathcal S(V\times V)$ and $\mathcal S'(V\times V)$, it is at least possible to compose $M$ and one Knapp-Stein operator (in any order), obtaining an operator mapping $\mathcal S(V\times V) $ into $\mathcal S'(V\times V)$. It is then possible to apply the symbolic calculus developed in the previous section and to gain some information on the operator $F_{\lambda, \mu}$.

The distinction between $V$ and its dual $V^*$ will be explicitly made, whereas \cite{bck} used the non degenerate bilinear form $\tau(x,y) = \tr(xy)$ to identify $V$ and $V^*$.

\begin{proposition} The operators $\widetilde I_{\lambda, \epsilon}$ satisfy 
\begin{equation} 
\widetilde I_{\lambda, \epsilon} \circ \widetilde I_{\frac{2n}{r}-\lambda, \epsilon} = \kappa_2(\lambda, \epsilon) \Id\ ,
\end{equation}
where $\kappa_2(\lambda, \epsilon)$ is a meromorphic function $\not \equiv 0$ on $\mathbb C$. 
\end{proposition}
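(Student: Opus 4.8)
The plan is to prove first, by a Schur-type argument carried out in the compact picture, that the composition is a scalar multiple of the identity, and then to identify that scalar and check that it is not identically $0$ by passing to the noncompact picture and using the Fourier transform \eqref{Fintw}. For the first step, composing the two intertwining relations of the Knapp-Stein operators shows that $T_\lambda := \widetilde I_{\lambda,\epsilon}\circ \widetilde I_{\frac{2n}{r}-\lambda,\epsilon}$ is an endomorphism of $\mathcal H_{\frac{2n}{r}-\lambda,\epsilon}$ which intertwines $\widetilde\pi_{\frac{2n}{r}-\lambda,\epsilon}$ with itself. For $\lambda$ outside a discrete set the representation $\widetilde\pi_{\frac{2n}{r}-\lambda,\epsilon}$ is irreducible, so Schur's lemma forces $T_\lambda = \kappa_2(\lambda,\epsilon)\,\Id$ for a scalar $\kappa_2(\lambda,\epsilon)$. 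Since $\lambda\mapsto\widetilde I_{\lambda,\epsilon}$ is a meromorphic family, so is $\lambda\mapsto T_\lambda$, and the identity $T_\lambda=\kappa_2(\lambda,\epsilon)\Id$, valid generically, extends to all $\lambda$ by analytic continuation, with $\kappa_2(\cdot,\epsilon)$ meromorphic on $\mathbb C$.

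To compute $\kappa_2$, I would pass to the noncompact picture, where $\widetilde I_{\lambda,\epsilon}$ is convolution on $V$ with kernel $\det(x)^{-\frac{2n}{r}+\lambda,\epsilon}$, hence multiplication on the Fourier side by the tempered distribution $k_{\lambda,\epsilon}(\xi)$ of \eqref{Fintw}, while $\widetilde I_{\frac{2n}{r}-\lambda,\epsilon}$ is convolution with kernel $\det(x)^{-\lambda,\epsilon}$, i.e.\ multiplication by $k_{\frac{2n}{r}-\lambda,\epsilon}(\xi)$. By the first step the composition is $\kappa_2(\lambda,\epsilon)\Id$, so its Fourier multiplier is the constant $\kappa_2(\lambda,\epsilon)$; on the other hand, on the dense open set $\{\det\xi\neq0\}$, where both factors are ordinary functions, this multiplier equals the pointwise product $k_{\lambda,\epsilon}(\xi)\,k_{\frac{2n}{r}-\lambda,\epsilon}(\xi)$. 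Expanding by \eqref{Fintw} (note $\frac{n}{r}-(\frac{2n}{r}-\lambda)=\lambda-\frac{n}{r}$) and using the elementary identities $t^{a,+}t^{-a,+}=t^{a,-}t^{-a,-}=1$ and $t^{a,+}t^{-a,-}=t^{a,-}t^{-a,+}=\sgn(t)$ applied to $t=\det\xi$, one obtains
\[
k_{\lambda,\epsilon}(\xi)\,k_{\frac{2n}{r}-\lambda,\epsilon}(\xi)=A(\lambda,\epsilon)+B(\lambda,\epsilon)\,\sgn(\det\xi),
\]
with $A(\lambda,\epsilon)=\kappa_+(\lambda,\epsilon)\kappa_+(\frac{2n}{r}-\lambda,\epsilon)+\kappa_-(\lambda,\epsilon)\kappa_-(\frac{2n}{r}-\lambda,\epsilon)$ and $B(\lambda,\epsilon)=\kappa_+(\lambda,\epsilon)\kappa_-(\frac{2n}{r}-\lambda,\epsilon)+\kappa_-(\lambda,\epsilon)\kappa_+(\frac{2n}{r}-\lambda,\epsilon)$. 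Since the left side must be constant, $B\equiv0$ and $\kappa_2(\lambda,\epsilon)=A(\lambda,\epsilon)$.

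It remains to see $\kappa_2\not\equiv0$. For generic $\lambda$ neither $k_{\lambda,\epsilon}$ nor $k_{\frac{2n}{r}-\lambda,\epsilon}$ vanishes identically on $\{\det\xi\neq0\}$: they equal $(\kappa_+(\lambda,\epsilon)\pm\kappa_-(\lambda,\epsilon))|\det\xi|^{\frac{n}{r}-\lambda}$ on $\{\det\xi>0\}$, resp.\ $\{\det\xi<0\}$, and these cannot all vanish because $\kappa_+,\kappa_-$ are not both identically $0$; a short case analysis with the formulas for $A$ and $B$ above, using this non-vanishing at both parameters $\lambda$ and $\frac{2n}{r}-\lambda$, then yields $\kappa_2=A\not\equiv0$. (Alternatively, one substitutes the explicit products of Gamma factors for $\kappa_\pm$ recorded in \cite{bck}, Section 5.) The point requiring care, and the reason the first step is done in the compact picture, is that the two Knapp-Stein operators individually map $\mathcal S(V)$ only into $\mathcal S'(V)$, so their composition is not a priori defined directly on $V$; once it is known abstractly to be a constant times the identity, the Fourier computation is legitimate, since it only needs to identify that constant, and it does so on the dense open set where the two multipliers are classical functions.
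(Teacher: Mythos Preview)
Your argument is correct, but it follows a somewhat different route from the paper's. The paper does not invoke Schur's lemma or generic irreducibility at all: it works directly in the noncompact picture, first restricting to the unitary axis $\Re(\lambda)=\frac{n}{r}$, where $\pi_{\lambda,\epsilon}$ is unitary and the Knapp--Stein operator is (up to a scalar) a unitary operator on $L^2(V)$, hence bounded, so the composition is well-defined; there the Fourier transform immediately shows that the product of the two multipliers $k_{\lambda,\epsilon}(\xi)\,k_{\frac{2n}{r}-\lambda,\epsilon}(\xi)$ is a (nonzero) constant, and analytic continuation then extends the identity to generic $\lambda$. Your approach instead handles the domain issue by moving to the compact picture, where the composition is automatically defined, and then appeals to generic irreducibility of the degenerate principal series plus Schur's lemma to obtain the scalar property abstractly before computing it via Fourier transform. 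The trade-off is that you import a nontrivial representation-theoretic input (generic irreducibility of $\widetilde\pi_{\frac{2n}{r}-\lambda,\epsilon}$, which for degenerate principal series is a substantial fact in its own right), whereas the paper's argument is more self-contained, needing only the $L^2$-boundedness on the unitary line. In the end both approaches reduce to the same Fourier product computation you carry out, and your careful remark about why the composition cannot be taken directly in $\mathcal S(V)\to\mathcal S'(V)$ is a point the paper leaves implicit.
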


This result is well known, and can be shown through the noncompact realization, first when $\Re(\lambda) = \frac{n}{r}$ in which case the representation is unitary and the Knapp-Stein operator is (up to a scalar)
is also unitary on $L^2(V)$, as can be seen on  the non compact picture, using the Fourier transform of the kernel of $I_{\lambda, \epsilon}$. The inverse of $\widetilde I_{\lambda, \epsilon}$ is equal (up to a scalar) to $\widetilde I_{\frac{2n}{r}-\lambda, \epsilon} $ which again can be seen on the noncompact picture, after a Fourier transform. The relation is then extended by analytic continuation to  all generic $\lambda$ (i.e. outside of the poles of the Knapp-Stein operators).

From the definition of $\widetilde F_{(\lambda, \epsilon), (\mu, \eta)}$ it is possible to deduce the two following equalities.
\begin{equation}\label{twoterms1}
\widetilde F_{(\lambda, \epsilon), (\mu, \eta)} \circ \left(\widetilde I_{\frac{2n}{r}-\lambda,\epsilon}\otimes \widetilde I_{\frac{2n}{r}-\mu,\eta}\right)= \kappa_3(\lambda, \epsilon; \mu, \eta) \left(\widetilde I_{\frac{2n}{r}-1-\lambda,-\epsilon}\otimes \widetilde I_{\frac{2n}{r}-1-\mu,-\eta}\right)\circ \widetilde M
\end{equation}
\begin{equation}\label{twoterms2}
\left(\widetilde I_{\lambda+1,-\epsilon}\otimes \widetilde I_{\mu+1,-\eta}\right) \circ \widetilde F_{(\lambda, \epsilon), (\mu, \eta)} =  \kappa_4(\lambda, \epsilon; \mu, \eta) \widetilde M\circ \left(\widetilde I_{\lambda,\epsilon}\otimes \widetilde I_{\mu,\eta}\right)
\end{equation}
where $\kappa_3$ and $\kappa_4$ are meromorphic $\not \equiv 0$ functions on $\mathbb C\times \mathbb C$.

Now transfer these identities to the noncompact picture. As $ I_{\lambda, \epsilon}$  is a convolution operator, let $k_\lambda$ be its Fourier transform, which is at the same time its symbol. As the operator $F_{\lambda, \mu}$ is a differential operator 
with polynomial coefficients, let $f_{\lambda, \mu}(x,y,\xi, \zeta)$ be its symbol.
The two identities \eqref{twoterms1} and \eqref{twoterms2} can be translated using the symbolic calculus on $E=V\times V$.
\begin{proposition}
\begin{equation}\label{flambdamu1}
\begin{split}
f_{\lambda, \mu}(x,y, \xi, \zeta) \left(k_{\frac{2n}{r} -\lambda, \epsilon}(\xi)\otimes k_{\frac{2n}{r}-\mu, \eta}(\zeta)\right)
\\
 = \kappa_3(\lambda, \epsilon; \mu, \eta)\, \left(k_{\frac{2n}{r}-1 -\lambda,- \epsilon}(\xi)\otimes k_{\frac{2n}{r}-1-\mu, -\eta}(\zeta)\right) \,\sharp \,\det(x-y) 
\end{split}
\end{equation}
\begin{equation}\label{flambdamu2}
\begin{split}
\left(k_{\lambda+1,-\epsilon} (\xi) \otimes k_{\mu+1-\eta}(\zeta)\right)\, \sharp \,f_{\lambda, \mu} (x,y,\xi, \zeta) \\
= \kappa_4(\lambda, \epsilon; \mu, \eta) \det(x-y)\, \left( k_{\lambda,\epsilon} (\xi) \otimes k_{\mu,\eta}(\zeta)\right)\ .
\end{split}
\end{equation}
\end{proposition}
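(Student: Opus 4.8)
The plan is to transfer the two operator identities \eqref{twoterms1} and \eqref{twoterms2} from the compact picture to the noncompact picture, and then to apply, essentially mechanically, the symbolic calculus of Section 1 with $E=V\times V$. First I would record what each operator becomes on $V\times V$: a Knapp--Stein operator $\widetilde I_{\nu,\delta}$ becomes the convolution operator $I_{\nu,\delta}$ of \eqref{defKS}, so that $\widetilde I_{\nu_1,\delta_1}\otimes\widetilde I_{\nu_2,\delta_2}$ is the convolution operator on $V\times V$ whose Fourier transform, hence symbol, is the tensor product $k_{\nu_1,\delta_1}(\xi)\otimes k_{\nu_2,\delta_2}(\zeta)$; the operator $\widetilde M$ becomes multiplication by the polynomial $\det(x-y)$, an element of $\mathcal W(V\times V)$ with symbol $\det(x-y)$; and $\widetilde F_{(\lambda,\epsilon),(\mu,\eta)}$ becomes the polynomial-coefficient differential operator $F_{\lambda,\mu}$ with symbol $f_{\lambda,\mu}(x,y,\xi,\zeta)$. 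Before taking symbols one checks that each of the four composites appearing in \eqref{twoterms1}--\eqref{twoterms2} is a legitimate map $\mathcal S(V\times V)\to\mathcal S'(V\times V)$ lying in $Op(V\times V)$: this holds because $M$ preserves both $\mathcal S(V\times V)$ and $\mathcal S'(V\times V)$, each $\widetilde I\otimes\widetilde I$ maps $\mathcal S$ into $\mathcal S'$, and a polynomial-coefficient differential operator maps $\mathcal S'$ into $\mathcal S'$ --- which is exactly why $M$, and not a Knapp--Stein factor, sits in the middle of each composition. The scalars $\kappa_3,\kappa_4$ are unaffected by the transfer.

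Next I would compute the symbol of each side. For the left-hand side of \eqref{flambdamu1} we have $F_{\lambda,\mu}$ composed on the right with the convolution operator $I_{\frac{2n}{r}-\lambda,\epsilon}\otimes I_{\frac{2n}{r}-\mu,\eta}$. Writing $F_{\lambda,\mu}=\sum_\alpha p_\alpha\,\partial^\alpha$ in differentiation-first form and noting that $\partial^\alpha$ followed by a convolution operator is the convolution operator by $\partial^\alpha$ of its kernel, this composite is $\sum_\alpha p_\alpha\,K_\alpha$ with each $K_\alpha$ a convolution operator, i.e.\ it is already in the normal form $\sum(\text{polynomial})\circ(\text{convolution})$ of $Op(E)$; since the Fourier transform turns $\partial^\alpha$ into multiplication by $(i\xi,i\zeta)^\alpha$, its symbol is the \emph{ordinary product} $f_{\lambda,\mu}(x,y,\xi,\zeta)\,\bigl(k_{\frac{2n}{r}-\lambda,\epsilon}(\xi)\otimes k_{\frac{2n}{r}-\mu,\eta}(\zeta)\bigr)$. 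For the right-hand side of \eqref{flambdamu1} we have a convolution operator composed on the right with the multiplication operator $M$; this is exactly the situation $L\circ D$ with $L\in Op(E)$, $D\in\mathcal W(E)$ of Proposition \ref{compKp}, whose symbol is $\bigl(k_{\frac{2n}{r}-1-\lambda,-\epsilon}(\xi)\otimes k_{\frac{2n}{r}-1-\mu,-\eta}(\zeta)\bigr)\,\sharp\,\det(x-y)$. Equating the two symbols and carrying along $\kappa_3$ gives \eqref{flambdamu1}.

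The identity \eqref{flambdamu2} is obtained in the same way, with the two kinds of composition interchanged: on its left-hand side a convolution operator is composed on the right with the differential operator $F_{\lambda,\mu}$, again of the form $L\circ D$ treated by Proposition \ref{compKp}, with symbol $\bigl(k_{\lambda+1,-\epsilon}(\xi)\otimes k_{\mu+1,-\eta}(\zeta)\bigr)\,\sharp\,f_{\lambda,\mu}(x,y,\xi,\zeta)$; on its right-hand side the multiplication operator $M$ is composed on the right with a convolution operator, which is already in the normal order of $Op(E)$ (convolution first, multiplication afterwards), so its symbol is the ordinary product $\det(x-y)\,\bigl(k_{\lambda,\epsilon}(\xi)\otimes k_{\mu,\eta}(\zeta)\bigr)$. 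Equating gives \eqref{flambdamu2}. (Alternatively, the two ordinary-product computations could be read off from the duality between $\sharp$ and $\flat$ of Proposition \ref{compduality}.)

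I do not anticipate a genuine difficulty: the content is essentially bookkeeping with the conventions of Section 1. The one place to be careful is keeping straight, in each of the four composites, the order of composition together with the normal-ordering convention (convolution precedes multiplication), so as to recognise when the symbol is a true $\sharp$-product given by Proposition \ref{compKp} and when it collapses to an ordinary product; the secondary point is the verification that all four compositions make sense as maps $\mathcal S(V\times V)\to\mathcal S'(V\times V)$, which is precisely the mapping-property discussion recalled just before the Proposition.
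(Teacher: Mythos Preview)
Your proposal is correct and is exactly what the paper intends: the paper itself gives no explicit proof, merely stating that ``the two identities \eqref{twoterms1} and \eqref{twoterms2} can be translated using the symbolic calculus on $E=V\times V$'', and your write-up supplies precisely those details --- identifying which composites are already in normal order (hence have symbol equal to the ordinary product) and which require Proposition~\ref{compKp} to produce a $\sharp$-product. Your careful distinction between the four cases is the only nontrivial point, and you handle it correctly.
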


In \cite{bck},Theorem 4.8 was the key result in the construction of the operator $F_{\lambda, \mu}$. Here it is reinterpreted in order to fit with the symbolic calculus developed  in the previous section. Let $\det^*$ the polynomial on $E^*$ obtained through the identification of $V^*$ with  $V$ through the bilinear form $\tau$, and let
\[{V^*}^\times= \{\xi\in V^*, {\det}^*(\xi) \neq 0\} \ .
\]
\begin{proposition}\label{defDst}
For generic $(s,\epsilon), (t, \eta)\in \mathbb C\times \{ \pm \}$ there exists a differential operator $D_{s,t}$ with polynomial coefficients on $V^* \times V^*$ such that
\[\det\left(\frac{1}{i}( \partial_\xi-\partial_\zeta) \right)\circ  \left({\det}^* \xi ^{s,\epsilon} {\det}^* \zeta ^{t,\eta} \right)= \left({\det}^* \xi^{s-1,-\epsilon}{\det}^* \zeta^{t-1, -\eta}\right)\circ D_{s,t} 
\]
\end{proposition}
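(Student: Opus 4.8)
Recognising the statement as the $V^*$-analogue of \cite{bck}, Theorem~4.8, the plan is to reduce the asserted identity of operators to a divisibility statement via the Leibniz rule, and then to read off $D_{s,t}$ through the dual symbolic calculus of Section~1. Everything takes place on the dense open set ${V^*}^\times\times{V^*}^\times$, where the relevant power functions are smooth. Set $h(\xi)={\det}^*\xi^{s,\epsilon}$, $k(\zeta)={\det}^*\zeta^{t,\eta}$, $g(\xi,\zeta)=h(\xi)k(\zeta)$, and write $m_p$ for the operator of multiplication by a function $p$. Since $\det$ is homogeneous of degree $r$, $\det\!\big(\tfrac1i(\partial_\xi-\partial_\zeta)\big)=i^{-r}\det(\partial_\xi-\partial_\zeta)$, and expanding the Jordan determinant gives $\det(\partial_\xi-\partial_\zeta)=\sum_{|a|+|b|=r}c_{ab}\,\partial_\xi^a\partial_\zeta^b$. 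Applying the Leibniz rule to each $\partial_\xi^a\partial_\zeta^b\circ m_g$ and regrouping by the remaining derivative gives
\[\det\!\big(\tfrac1i(\partial_\xi-\partial_\zeta)\big)\circ m_g=\sum_{|a'|+|b'|\le r} m_{\Phi_{a',b'}}\circ\partial_\xi^{a'}\partial_\zeta^{b'},\]
where $\Phi_{a',b'}$ is a finite combination of products $(\partial^\alpha h)(\xi)(\partial^\beta k)(\zeta)$ with $|\alpha|+|\beta|=r-|a'|-|b'|$. It therefore suffices to prove that each $\Phi_{a',b'}$ is divisible, with polynomial quotient $\psi_{a',b'}$, by $g'(\xi,\zeta)={\det}^*\xi^{s-1,-\epsilon}{\det}^*\zeta^{t-1,-\eta}$: then $D_{s,t}:=\sum_{a',b'} m_{\psi_{a',b'}}\circ\partial_\xi^{a'}\partial_\zeta^{b'}$ is a differential operator with polynomial coefficients on $V^*\times V^*$ satisfying $m_{g'}\circ D_{s,t}=\det\!\big(\tfrac1i(\partial_\xi-\partial_\zeta)\big)\circ m_g$, and its symbol is obtained from \eqref{symb*}.

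For the divisibility I would argue as follows. The chain rule $\partial_j\big({\det}^*\xi^{u,\delta}\big)=u\,{\det}^*\xi^{u-1,-\delta}\,\partial_j({\det}^*\xi)$, iterated, shows that $\partial^\alpha h$ is a combination of terms ${\det}^*\xi^{s-j,\pm\epsilon}\times(\text{polynomial})$ with $0\le j\le|\alpha|$, the sign being $\epsilon$ or $-\epsilon$ according to the parity of $j$; similarly for $\partial^\beta k$. Using the elementary identity $t^{s,\epsilon}=t\cdot t^{s-1,-\epsilon}$ on $\mathbb R^*$, the contributions with $j\le 1$ in the $\xi$-variable (and likewise in $\zeta$) are already divisible by $g'$ with polynomial quotient, and for $|a'|+|b'|\ge r-1$ these are the only contributions. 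The essential point is that for $|a'|+|b'|\le r-2$ the contributions carrying a factor ${\det}^*\xi^{s-j,\cdot}$ or ${\det}^*\zeta^{t-j,\cdot}$ with $j\ge 2$, which are individually \emph{not} divisible by $g'$, recombine --- using the Jordan identities relating the polynomials $\partial^\alpha{\det}^*$ to powers of ${\det}^*$ --- into a multiple of $g'$ with polynomial quotient. This cancellation fails for a generic constant-coefficient operator of degree $r$; that it holds for $\det(\partial_\xi-\partial_\zeta)$ is precisely the content of \cite{bck}, Theorem~4.8 (it may also be derived from the Bernstein--Sato identity $\det(\partial_\xi)\,{\det}^*\xi^{u,\delta}=b_r(u)\,{\det}^*\xi^{u-1,-\delta}$ for the prehomogeneous vector space $(V^*,{\det}^*)$, promoted to polynomial factors and to the difference operator). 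I would simply invoke that theorem, after checking that the identification of $V$ with $V^*$ by $\tau$ used in \cite{bck} and the scalar $i^{-r}$ contribute nothing beyond the sign flips $\epsilon\mapsto-\epsilon$, $\eta\mapsto-\eta$ already visible above. Note also that the quotients $\psi_{a',b'}$, and hence $D_{s,t}$, do not depend on $\epsilon,\eta$ --- consistent with the notation --- since the signs only label which branch of a power function occurs.

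Finally, a word on the term ``generic'': the quotients $\psi_{a',b'}$ depend polynomially on $(s,t)$ once a $b_r$-type denominator is cleared, and ${\det}^*\xi^{u,\delta}$ is singular only for $u$ in a discrete set, so after excluding those values of $s$ and $t$ the displayed operator identity on ${V^*}^\times\times{V^*}^\times$ is, by density and the locality of differential operators, the identity of the proposition. I expect the only genuine obstacle to be the recombination step above --- the place where the structure of the Jordan determinant is really used and where one is reduced to quoting \cite{bck}; the remainder (the Leibniz expansion, the sign rule for $t^{s,\epsilon}$, and the translation into the dual symbolic calculus) is routine.
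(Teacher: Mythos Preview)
Your approach is essentially the paper's: both proofs invoke \cite{bck}, Theorem~4.8, as the substantive input, and your Leibniz expansion with the divisibility discussion is just an unpacking of what that theorem asserts. The paper's proof is actually one sentence of citation followed by an extension argument.

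There is, however, a gap in your final step. You write that the operator identity on ${V^*}^\times\times{V^*}^\times$ extends to all of $V^*\times V^*$ ``by density and the locality of differential operators.'' This does not work as stated: the coefficient functions ${\det}^*\xi^{s,\epsilon}$, ${\det}^*\xi^{s-1,-\epsilon}$, etc., are in general singular along $\{{\det}^*\xi=0\}$, so two distributions that agree on the dense open set need not agree globally (think of $0$ versus $\delta$ supported on the hypersurface). Locality of differential operators does not help here, because the issue is the multiplication operators, not the derivatives. The paper closes this gap differently: it first takes $\Re(s),\Re(t)\gg 0$, so that ${\det}^*\xi^{s,\epsilon}$ (extended by $0$) is a $C^k$ function for $k$ as large as needed; then both sides of the identity are continuous and agreement on the dense open set does give agreement everywhere. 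Since the coefficients of $D_{s,t}$ are polynomial in $(s,t)$ and the power functions depend meromorphically on $s,t$ as tempered distributions, the identity then extends to generic $(s,t)$ by analytic continuation. You already have the polynomial dependence observation in your last paragraph, so the fix is short: replace the density appeal by this two-step argument.
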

\begin{proof} The existence of such an operator is proved in \cite{bck}, but only on ${V^*}^\times \times {V^*}^\times$. To complete the proof of the present proposition, assume for the moment that $\Re(s) >>0$. Then $\det \xi^{s,\epsilon}$, extended by $0$ on $\{\xi\in V^*, {\det}^*\xi = 0\}$ is a smooth enough function on $V$ which vanishes on ${V^*}^\times$ up to any given (high) order. Similarly  for $\det \zeta^{t,\eta}$, so that the stronger statement of the Proposition is valid for $\Re(s), \Re(t)>>0$. Moreover, the operator $D_{s,t}$ has  polynomial coefficients which are also polynomial functions in the parameters $(s,t)$. The statement follows for generic $s, t$ by analytic continuation in $(s,t)$.
 \end{proof}
 
 The proposition can be translated in the terminology introduced for the symbolic calculus of the previous section.
 
 \begin{proposition}
 \begin{equation}\label{defdst}
 \det(x-y)\, \flat \, \det\xi ^{s,\epsilon} \det\zeta^{t,\eta} = \det\xi ^{s-1,\epsilon} \det\zeta^{t-1,\eta} d_{s,t}(\xi, \zeta, x,y)\ ,
 \end{equation}
 where $d(\xi, \zeta, x,y)$ is the symbol of the operator $D_{s,t}$.
 \end{proposition}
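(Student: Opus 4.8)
The plan is to deduce \eqref{defdst} from Proposition~\ref{defDst} by a direct translation through the dual symbolic calculus on $E^{*}=V^{*}\times V^{*}$ set up in Section~1; no new ingredient is needed beyond that calculus.

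First I would record the $symb^{*}$ of the three operators appearing in Proposition~\ref{defDst}. The operator $\det\!\left(\frac{1}{i}(\partial_\xi-\partial_\zeta)\right)$ is a constant-coefficient differential operator on $E^{*}$, hence lies in $\mathcal W(E^{*})$; since $\sigma^{*}$ is a morphism on the commutative algebra of constant-coefficient operators and $\sigma^{*}\!\left(\frac{1}{i}\partial_{\xi_j}\right)=x_j$, $\sigma^{*}\!\left(\frac{1}{i}\partial_{\zeta_j}\right)=y_j$, its symbol is $\det(x-y)$. For generic $(s,t)$ the object ${\det}^{*}\xi^{s,\epsilon}\,{\det}^{*}\zeta^{t,\eta}$ is a tempered distribution on $E^{*}$ (via the analytic continuation recalled on the dual side), so the corresponding multiplication operator lies in $Op(E^{*})$ and is its own symbol, namely $\det\xi^{s,\epsilon}\det\zeta^{t,\eta}$; similarly for ${\det}^{*}\xi^{s-1,\epsilon}\,{\det}^{*}\zeta^{t-1,\eta}$. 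Finally $D_{s,t}\in\mathcal W(E^{*})$ by construction, with symbol $d_{s,t}(\xi,\zeta,x,y)$.

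Next I would apply $symb^{*}$ to the two sides of the operator identity of Proposition~\ref{defDst}, treating them differently. The left-hand side is a composition $D\circ L$ with $D=\det\!\left(\frac{1}{i}(\partial_\xi-\partial_\zeta)\right)\in\mathcal W(E^{*})$ and $L$ the multiplication operator in $Op(E^{*})$, so Proposition~\ref{compDL} applies as it stands and produces the symbol $\det(x-y)\,\flat\,\det\xi^{s,\epsilon}\det\zeta^{t,\eta}$. On the right-hand side the Weyl-algebra factor $D_{s,t}$ stands to the \emph{right} of the multiplication operator, so the composition formula \eqref{symb*} is not directly applicable; instead I would argue by hand, writing $D_{s,t}$ in normal form as $\sum_\beta p_\beta\,\partial_\xi^{\beta}$ and noting that pre-multiplying by the tempered distribution $m=\det\xi^{s-1,\epsilon}\det\zeta^{t-1,\eta}$ gives the operator $\sum_\beta (m\,p_\beta)\,\partial_\xi^{\beta}\in Op(E^{*})$, whose symbol is $m$ times the symbol of $D_{s,t}$, that is $m\cdot d_{s,t}(\xi,\zeta,x,y)$. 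Since every element of $Op(E^{*})$ is uniquely of the form $\sum_\alpha l_\alpha\partial_\xi^{\alpha}$, the map $symb^{*}$ is injective on $Op(E^{*})$, and the operator identity of Proposition~\ref{defDst} is therefore equivalent to the equality of symbols, which reads exactly $\det(x-y)\,\flat\,\det\xi^{s,\epsilon}\det\zeta^{t,\eta}=\det\xi^{s-1,\epsilon}\det\zeta^{t-1,\eta}\,d_{s,t}(\xi,\zeta,x,y)$, i.e. \eqref{defdst}.

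I do not expect a genuine obstacle: the mathematical content lies entirely in Section~1 and in Proposition~\ref{defDst}, and what remains is bookkeeping. The two points worth a line of care are, first, checking that both sides of the identity in Proposition~\ref{defDst} really do define operators in $Op(E^{*})$, so that the symbolic calculus and the injectivity of $symb^{*}$ may be invoked — this is where the genericity of the parameters and the tempered-distribution property of ${\det}^{*}\xi^{s,\epsilon}$ intervene; and second, the asymmetry just mentioned, namely that the left-hand side is handled via the composition formula \eqref{symb*} while the right-hand side is handled via the elementary fact that pre-multiplying a normal-form operator by a distribution simply multiplies its symbol by that distribution.
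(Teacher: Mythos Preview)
Your proposal is correct and follows exactly the route the paper intends: the paper offers no separate proof for this proposition, merely stating that it is the translation of Proposition~\ref{defDst} into the dual symbolic calculus of Section~1, and you have carefully carried out that translation. Your handling of the asymmetry between the two sides---applying Proposition~\ref{compDL} on the left, and on the right using the elementary observation that pre-multiplying a normal-form element of $Op(E^{*})$ by a distribution multiplies its $symb^{*}$ by that distribution---is precisely the bookkeeping the paper leaves implicit.
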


 \begin{proposition} Let $f_{\lambda, \mu}^*$ be the polynomial function on $(V^* \times V^*)\times(V\times V)$ defined by
 \[f_{\lambda, \mu} ^*(\xi, \eta, x,y) = f_{\lambda, \mu} (x,y, \xi, \eta)\ .\]
 Then the following identity holds
 \begin{equation}\label{fstar=d}
 f_{\lambda, \mu}^*(\xi, \eta, x,y) = \kappa_5\, (\lambda, \mu)\, d_{\lambda-\frac{n}{r}+1, \mu-\frac{n}{r}+1}(\xi, \eta, x,y)\ ,
 \end{equation}
 where $\kappa_5(\lambda, \mu)$ is a meromorphic $\not \equiv 0$ function on $\mathbb C\times \mathbb C$.
 \end{proposition}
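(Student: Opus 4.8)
The plan is to read \eqref{fstar=d} off the composition identity \eqref{flambdamu1}, after turning its $\sharp$-product into a $\flat$-product on $V^*\times V^*$ and inserting the Fourier transform formula \eqref{Fintw} and the defining relation \eqref{defdst} of $d_{s,t}$. In the right-hand side of \eqref{flambdamu1} the factor $k_{\frac{2n}{r}-1-\lambda,-\epsilon}(\xi)\otimes k_{\frac{2n}{r}-1-\mu,-\eta}(\zeta)$ is a symbol (it depends only on $(\xi,\zeta)$) and $\det(x-y)$ is a polynomial, so Proposition~\ref{compduality} applies and
\[\big(k_{\frac{2n}{r}-1-\lambda,-\epsilon}(\xi)\otimes k_{\frac{2n}{r}-1-\mu,-\eta}(\zeta)\big)\,\sharp\,\det(x-y)=\det(x-y)\,\flat\,\big(k_{\frac{2n}{r}-1-\lambda,-\epsilon}(\xi)\otimes k_{\frac{2n}{r}-1-\mu,-\eta}(\zeta)\big),\]
where the right-hand side is computed in the calculus on $V^*\times V^*$ and $\det(x-y)$ is now the symbol of the operator $\det\!\big(\tfrac1i(\partial_\xi-\partial_\zeta)\big)$ of Proposition~\ref{defDst}. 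Rewriting \eqref{flambdamu1} in terms of $f_{\lambda,\mu}^{*}$ thus gives
\[f_{\lambda,\mu}^{*}(\xi,\zeta,x,y)\,\big(k_{\frac{2n}{r}-\lambda,\epsilon}(\xi)\otimes k_{\frac{2n}{r}-\mu,\eta}(\zeta)\big)=\kappa_3\,\det(x-y)\,\flat\,\big(k_{\frac{2n}{r}-1-\lambda,-\epsilon}(\xi)\otimes k_{\frac{2n}{r}-1-\mu,-\eta}(\zeta)\big).\]

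Set $s=\lambda-\tfrac nr+1$, $t=\mu-\tfrac nr+1$. By \eqref{Fintw}, $k_{\frac{2n}{r}-1-\lambda,-\epsilon}(\xi)$ is a linear combination of $\det\xi^{s,+}$ and $\det\xi^{s,-}$ (the exponent being $\tfrac nr-(\tfrac{2n}{r}-1-\lambda)=s$), and similarly in $\zeta$ with exponent $t$. Since $\flat$ is linear in its second argument and, by Proposition~\ref{defDst}, the operator $D_{s,t}$ — hence its symbol $d_{s,t}$ — does not depend on the signs, applying $\det(x-y)\,\flat\,$ and using \eqref{defdst} turns the right-hand side into $\kappa_3\,d_{s,t}(\xi,\zeta,x,y)$ times a linear combination of the four products $\det\xi^{s-1,\pm}\det\zeta^{t-1,\pm}$. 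On the left-hand side \eqref{Fintw} writes $k_{\frac{2n}{r}-\lambda,\epsilon}(\xi)$ as a linear combination of $\det\xi^{s-1,+}$ and $\det\xi^{s-1,-}$ (exponent $\tfrac nr-(\tfrac{2n}{r}-\lambda)=s-1$), and likewise in $\zeta$. Hence both members are now $(\text{polynomial})\cdot(\text{combination of }\det\xi^{s-1,\pm}\det\zeta^{t-1,\pm})$, the polynomial being $f_{\lambda,\mu}^{*}$ on one side and $\kappa_3\,d_{s,t}$ on the other.

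Now restrict to a connected component $U$ of the dense open set $\{{\det}^*\xi\,{\det}^*\zeta\neq0\}$. On $U$ each of $\det\xi^{s-1,\pm}$, $\det\zeta^{t-1,\pm}$ is a fixed nonzero constant times $|{\det}^*\xi|^{s-1}|{\det}^*\zeta|^{t-1}$, so the identity collapses there to $f_{\lambda,\mu}^{*}=c_U\,d_{s,t}$ with $c_U$ a ratio of the scalars furnished by \eqref{Fintw}; for generic $(\lambda,\mu)$ this $c_U$ is finite and nonzero (one uses $\kappa_3\not\equiv0$ and that the relevant combinations of the $\kappa_\pm$ are not identically zero, which follows from \eqref{Fintw} together with $\widetilde I_{\lambda,\epsilon}\circ\widetilde I_{\frac{2n}{r}-\lambda,\epsilon}=\kappa_2(\lambda,\epsilon)\,\Id$ — whose Fourier-transformed form forces $k_{\lambda,\epsilon}(\xi)\,k_{\frac{2n}{r}-\lambda,\epsilon}(\xi)$ to be a nonzero constant). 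Here is the only point that is not purely formal: $f_{\lambda,\mu}^{*}$ is a polynomial, since $F_{\lambda,\mu}$ is a differential operator with polynomial coefficients (the main result of \cite{bck} recalled above), and $d_{s,t}$ is a polynomial, $\not\equiv0$ for generic $(s,t)$; therefore $f_{\lambda,\mu}^{*}-c_U\,d_{s,t}$ is a polynomial vanishing on the open set $U$, hence vanishing identically, which forces the same constant $c_U$ for every component $U$. Its common value is the desired $\kappa_5(\lambda,\mu)$: a ratio of meromorphic functions, so meromorphic; $\not\equiv0$ by the above; and independent of $\epsilon,\eta$ because $f_{\lambda,\mu}^{*}$ and $d_{s,t}$ are. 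The main obstacle is precisely this last step — \eqref{flambdamu1} by itself only yields that $f_{\lambda,\mu}^{*}$ is \emph{locally} a constant multiple of $d_{s,t}$ (equivalently, a nonobvious multiplicative identity among the coefficients $\kappa_\pm$ of \eqref{Fintw}), and it is the polynomiality of $F_{\lambda,\mu}$ that upgrades this to a global constant, with no need for the explicit formulas for $\kappa_\pm$.
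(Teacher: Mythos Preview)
Your proof is correct and follows essentially the same route as the paper: convert \eqref{flambdamu1} from $\sharp$ to $\flat$ via Proposition~\ref{compduality}, insert \eqref{Fintw}, restrict to the open set $\{{\det}^*\xi\,{\det}^*\zeta\neq0\}$, invoke \eqref{defdst}, and then use that $f_{\lambda,\mu}^{*}$ and $d_{s,t}$ are polynomials to pass from local to global. The only cosmetic differences are that the paper works on the single component $\{\det\xi>0\}\times\{\det\zeta>0\}$ (where the signs $\pm$ collapse, so no need to argue afterwards that $c_U$ is component-independent) and first assumes $\Re\lambda,\Re\mu\gg0$ so that the $k$'s are genuine functions before removing this hypothesis by analytic continuation.
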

 \begin{proof} Use Proposition \ref{compduality} to transform the identity \eqref{flambdamu1} and obtain
 \begin{equation}\label{keyeq}
 \begin{split}
 \kappa_3(\lambda,\epsilon; \mu, \eta) \det(x-y)\,  \flat\, \left( k_{\frac{2n}{r} -1-\lambda, -\epsilon}(\xi)\,  k_{\frac{2n}{r} -1-\mu, -\eta}(\zeta)\right)\\
 = \big(k_{\frac{2n}{r}-\lambda, -\epsilon} (\xi)\, k_{\frac{2n}{r}-\mu, -\eta}(\zeta)\big)\,  f^*_{\lambda, \mu}(\xi, \zeta, x,y)  \ .
 \end{split}
 \end{equation}
 Assume for a moment that $\Re(\lambda), \Re(\mu) >>0$. Consider the right hand side of \eqref{keyeq}. Recalling Proposition \ref{Fintw}, $k_{\frac{2n}{r}-\lambda, -\epsilon}$ is a linear combination of  of the two distributions $(\det \xi)^{-\frac{n}{r}+1+\lambda,\pm}$. As $\Re(\lambda)$ is supposed to be veery large,  this distribution is a function, smooth up to a large order. Now  on the open set $\{\det \xi>0\}\times \{ \det \zeta>0\}$, $(\det \xi)^{s,\pm} = \det \xi^s$, so that on this open set
  \[k_{\frac{2n}{r}-\lambda,-\epsilon}(\xi)= \kappa_6(\lambda-\epsilon) (\det \xi)^{-\frac{n}{r}+1+\lambda}\, ,\]
  where $\kappa_6$ is a meromorphic function of $\lambda$.
 
 A similar result holds for $k_{\frac{2n}{r}-\mu, -\epsilon}$, so that the right hand side of \eqref{keyeq} can be rewritten as
 \begin{equation}\label{RHS}
 \kappa_6(\lambda,-\epsilon) \kappa_6(\mu,-\eta)(\det \xi)^{-\frac{n}{r}+1+\lambda}(\det \zeta)^{-\frac{n}{r}+1+\mu}f^*_{\lambda, \mu}(\xi, \zeta, x,y)\ .
 \end{equation}
 Consider now the left hand side of \eqref{keyeq}. By the previous considerations, on the open set $\{\det \xi >0\}\times \{ \det \zeta>0\}$
 \[k_{\frac{2n}{r} -1-\lambda, -\epsilon}(\xi)k_{\frac{2n}{r} -1-\mu, -\eta}(\zeta)= \kappa_6(\lambda+1,\epsilon) \kappa_6(\mu+1,\eta)(\det \xi)^{-\frac{n}{r} +\lambda}(\det \zeta)^{-\frac{n}{r} +\mu},
 \] 
 so that, using  \eqref{defdst}, the left hand side of \eqref{keyeq} can be rewritten  as
 \begin{equation}\label{LHS}
 \kappa_6(\lambda+1,\epsilon) \kappa_6(\mu+1,\eta) (\det \xi)^{-\frac{n}{r}+1 +\lambda} (\det \zeta)^{-\frac{n}{r}+1 +\mu} d_{-\frac{n}{r} +\lambda, -\frac{n}{r} +\mu} (\xi,\zeta, x,y)\ .
 \end{equation} 
 Now compare \eqref{RHS} and \eqref{LHS} to conclude that both sides of \eqref{fstar=d} coincide on the open set  $\{\det \xi >0\}\times \{ \det \zeta>0\}$. As  both $f^*$ and $d$ are polynomial functions, the result follows everywhere on $V^*\times V^*$.

By analytic continuation, the conditions $\Re(\lambda) >>0$ and $\Re(\mu)>>0$ can be removed, thus finishing the proof of the proposition. 
 \end{proof}

 For the last part of this article, renormalize the operator $F_{\lambda,\mu}$ by demanding that its symbol $f_{\lambda, \mu}$ satisfies 
 \begin{equation}\label{normf}
 f_{\lambda, \mu}(x,y,\xi,\zeta) = d_{\lambda-\frac{n}{r}+1, \mu-\frac{n}{r}+1}(\xi, \zeta, x,y)\ .
 \end{equation}
 
 The next proposition gathers the main properties of the operator $F_{\lambda, \mu}$ which have been obtained.
 
 \begin{proposition} {\ }
 \smallskip
 
 $i)$ the operator $F_{\lambda, \mu}$ satisfies the covariance relation, valid for any $g\in G$
 \begin{equation}\label{covF}
F_{\lambda \mu} \circ \big(\pi_{\lambda, \epsilon} (g) \otimes \pi_{\mu,\eta}(g) \big) = \big(\pi_{\lambda+1,- \epsilon} (g) \otimes \pi_{\mu+1,-\eta}(g)\big)\circ 
F_{\lambda, \mu}
 \end{equation}
 \indent
$ii)$ the coefficients of $F_{\lambda,\mu}$ are polynomial functions on $V\times V$,  depending only on $(x-y)$.
\smallskip

$iii)$ the coefficients of $F_{\lambda, \mu}$ depend polynomially on the parameters $\lambda, \mu$.
 \end{proposition}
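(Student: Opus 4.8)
The plan is to read off all three assertions from the material already assembled, essentially by transcription through the symbolic calculus of Section 1. For (i), I would recall that $\widetilde F_{(\lambda,\epsilon),(\mu,\eta)}$ is defined in \eqref{defF} as a composition of Knapp--Stein intertwining operators with the intertwining operator $\widetilde M$, so that its covariance in the compact picture is immediate; transferring to the noncompact picture yields \eqref{covF} for $F_{\lambda,\mu}$. Since the renormalization \eqref{normf} merely multiplies $F_{\lambda,\mu}$ by the nonzero meromorphic scalar $\kappa_5(\lambda,\mu)^{-1}$ of \eqref{fstar=d}, and the same operator $F_{\lambda,\mu}$ appears on both sides of the homogeneous relation \eqref{covF}, this rescaling leaves covariance intact. (That $F_{\lambda,\mu}$ is a genuine differential operator, and not merely an element of $Op(V\times V)$, is the substantial input imported from \cite{bck}, which I would quote rather than reprove.)

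For (ii), I would use \eqref{normf}, which says $symb(F_{\lambda,\mu})(x,y,\xi,\zeta)=d_{s,t}(\xi,\zeta,x,y)$ with $s=\lambda-\tfrac nr+1$ and $t=\mu-\tfrac nr+1$; since $d_{s,t}$ is the symbol of the operator $D_{s,t}$ of Proposition \ref{defDst}, which has polynomial coefficients, $symb(F_{\lambda,\mu})$ is a polynomial in all four variables, i.e. $F_{\lambda,\mu}$ is a differential operator with polynomial coefficients. To obtain dependence on $(x,y)$ only through $x-y$, I would feed a diagonal translation $\tau_v\colon(x,y)\mapsto(x+v,y+v)$ into \eqref{covF}: such $\tau_v$ lies in $P\subset G$ with cocycle $a(\tau_v^{-1},\cdot)\equiv 1$ independently of the parameters, so both $\pi_{\lambda,\epsilon}(\tau_v)\otimes\pi_{\mu,\eta}(\tau_v)$ and $\pi_{\lambda+1,-\epsilon}(\tau_v)\otimes\pi_{\mu+1,-\eta}(\tau_v)$ act by $f\mapsto f(\,\cdot-v,\,\cdot-v)$, and \eqref{covF} then says $F_{\lambda,\mu}$ commutes with all diagonal translations, which for a differential operator forces its coefficients to be functions of $x-y$. (Alternatively, directly from \eqref{defF}: $\widetilde M$ is multiplication by $\det(x-y)$ and each Knapp--Stein factor is a convolution operator on its own factor, so the composite commutes with diagonal translations.)

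For (iii), I would invoke the last sentence of Proposition \ref{defDst}, that the coefficients of $D_{s,t}$ are polynomial functions of $(s,t)$; composing with the affine substitution $(s,t)=(\lambda-\tfrac nr+1,\mu-\tfrac nr+1)$ from \eqref{normf} gives polynomial dependence of the coefficients of $F_{\lambda,\mu}$ on $(\lambda,\mu)$. It is worth stressing that this is precisely what the renormalization \eqref{normf} buys: it discards the meromorphic factor $\kappa_5(\lambda,\mu)$ of \eqref{fstar=d}, upgrading the a priori merely meromorphic parameter dependence to a polynomial one.

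The main obstacle is not in assembling (i)--(iii) — that is routine given what precedes — but lies upstream, in Proposition \ref{defDst} and the propositions feeding \eqref{fstar=d}: the identity defining $D_{s,t}$ is first established only on ${V^*}^\times\times{V^*}^\times$ and for $\RE(s),\RE(t)\gg0$, and must then be propagated to all of $V^*\times V^*$ and to generic $(s,t)$ by analytic continuation, with polynomiality in both the coefficients and the parameters surviving the continuation. Granting those, the present proposition follows by transcription.
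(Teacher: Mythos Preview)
Your proposal is correct and follows essentially the same approach as the paper: the proposition is a summary of facts already established, and the paper's only explicit remark is precisely your argument for (ii), namely that dependence on $x-y$ follows from \eqref{covF} applied to diagonal translations $(x,y)\mapsto(x+v,y+v)$. Your treatment of (i) and (iii) via \eqref{defF}, \eqref{normf}, and Proposition~\ref{defDst} is the intended transcription, spelled out in more detail than the paper bothers to give.
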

 
 The fact that the coefficients of $F_{\lambda, \mu}$ depend only on $(x-y)$ can be deduced from the covariance property \eqref{covF} when applied to translations by elements of $V$, acting  $V\times V$ by $(x,y) \longmapsto (x+v, y+v)$.
  
 The operator $F_{\lambda, \mu}$ is called the \emph{source operator} and following the general pattern, can be used for constructing covariant bi-differential operators.
  
\section{The  covariant bi-differential operators}

Let $\res : \mathcal S(V\times V) \longrightarrow \mathcal S(V)$ be the restriction operator from $V\times V$ to $\diag(V) = \{ (x,x), x\in V\}\simeq V$ given by
\[\res(f)(x) = f(x,x),\qquad \text{for } x\in V\ .
\]
\begin{proposition} For $(\lambda, \epsilon), (\mu,\eta)\in \mathbb C\times \{\pm\}$, for anyn $g\in G$
\[\res \circ (\pi_{\lambda, \epsilon}(g) \otimes \pi_{\mu, \eta} (g)= \pi_{\lambda+\mu, \epsilon \eta}(g) \circ \res
\]
\end{proposition}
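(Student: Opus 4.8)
The statement to prove is that the restriction map $\res\colon \mathcal S(V\times V)\to\mathcal S(V)$ intertwines the outer tensor product $\pi_{\lambda,\epsilon}(g)\otimes\pi_{\mu,\eta}(g)$ on the diagonal with the single representation $\pi_{\lambda+\mu,\epsilon\eta}(g)$. The plan is to compute both sides of the claimed identity on a test function $f\in\mathcal S(V\times V)$ at a point $x\in V$, using nothing more than the explicit formula $\pi_{\lambda,\epsilon}(g)f(x)=a(g^{-1},x)^{-\lambda,\epsilon}f(g^{-1}(x))$ from Section~2 and the multiplicativity properties of the cocycle $a$ and of the power function $t\mapsto t^{s,\epsilon}$.

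First I would write out the left-hand side. For $f\in\mathcal S(V\times V)$ and $x\in V$,
\[
\bigl(\res\circ(\pi_{\lambda,\epsilon}(g)\otimes\pi_{\mu,\eta}(g))\bigr)f(x)
= \bigl((\pi_{\lambda,\epsilon}(g)\otimes\pi_{\mu,\eta}(g))f\bigr)(x,x)
= a(g^{-1},x)^{-\lambda,\epsilon}\, a(g^{-1},x)^{-\mu,\eta}\, f\bigl(g^{-1}(x),g^{-1}(x)\bigr).
\]
On the other side,
\[
\bigl(\pi_{\lambda+\mu,\epsilon\eta}(g)\circ\res\bigr)f(x)
= a(g^{-1},x)^{-(\lambda+\mu),\epsilon\eta}\,(\res f)\bigl(g^{-1}(x)\bigr)
= a(g^{-1},x)^{-(\lambda+\mu),\epsilon\eta}\, f\bigl(g^{-1}(x),g^{-1}(x)\bigr).
\]
So the whole statement reduces to the pointwise identity on scalars
\[
t^{-\lambda,\epsilon}\,t^{-\mu,\eta} = t^{-(\lambda+\mu),\epsilon\eta},\qquad t=a(g^{-1},x)\in\mathbb R^*,
\]
which is immediate from the definition of $t^{s,\delta}$ recalled in Section~2: the modulus parts multiply as $|t|^{-\lambda}|t|^{-\mu}=|t|^{-(\lambda+\mu)}$, and the sign parts satisfy $\sgn(t)^{[\epsilon=-]}\sgn(t)^{[\eta=-]}=\sgn(t)^{[\epsilon\eta=-]}$ since $\sgn(t)^2=1$. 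One should also note that the two sides have the same domain of definition, namely $\{x: a(g^{-1},x)\neq 0\}$, since the vanishing locus of the cocycle is the same for all parameters; away from that locus the argument $g^{-1}(x)$ of $f$ is well defined and the identity above holds verbatim.

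There is essentially no obstacle here: the proposition is a purely formal consequence of the explicit cocycle formula for the noncompact picture together with the composition rule for the power function, and the tensor-product structure makes the cocycle factor appear as a product. The only point requiring a word of care is the elementary remark that $t\mapsto t^{s,\delta}$ is multiplicative in the pair $(s,\delta)$ (with $\{\pm\}$ carrying its group structure), which is exactly how the signs combine; I would state this as a one-line lemma or simply invoke the definition and conclude.
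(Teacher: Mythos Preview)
Your argument is correct and is exactly the elementary verification the paper has in mind; indeed the paper does not give a proof at all but simply states that it is ``elementary and left to the reader.'' Your computation with the cocycle and the multiplicativity $t^{-\lambda,\epsilon}t^{-\mu,\eta}=t^{-(\lambda+\mu),\epsilon\eta}$ is the natural way to fill this in.
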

The proof is elementary and left to the reader.

For any positive integer $k$, let
\[F^{(k)}_{\lambda, \mu} = F_{\lambda+k-1,\mu+k-1} \circ \dots \circ F_{\lambda,\mu}
\]
and
\[B^{(k)}_{\lambda,\mu} = \res \circ F^{(k)}_{\lambda,\mu}\ .
\]
\begin{proposition} The operators $B_{\lambda, \mu}^{(k)}$ satisfy the following covariance relation, valid for any $g\in G$
\begin{equation}
B^{(k)}_{\lambda,\mu} \circ \big(\pi_{\lambda, \epsilon}(g)\otimes \pi_{\mu, \eta}(g)\big)=
\pi_{\lambda+\mu+2k, \epsilon \eta}(g) \circ B^{(k)}_{\lambda,\mu}\ .
\end{equation}
\end{proposition}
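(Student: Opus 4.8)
The plan is to reduce everything to the covariance of the source operator, equation \eqref{covF}, together with the covariance of the restriction map $\res$, by a telescoping argument.

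First I would prove, by induction on $k\ge 1$, that the composed operator $F^{(k)}_{\lambda,\mu}=F_{\lambda+k-1,\mu+k-1}\circ\dots\circ F_{\lambda,\mu}$ satisfies
\[
F^{(k)}_{\lambda,\mu}\circ\big(\pi_{\lambda,\epsilon}(g)\otimes\pi_{\mu,\eta}(g)\big)=\big(\pi_{\lambda+k,(-1)^k\epsilon}(g)\otimes\pi_{\mu+k,(-1)^k\eta}(g)\big)\circ F^{(k)}_{\lambda,\mu}
\]
for every $g\in G$. The case $k=1$ is precisely \eqref{covF}. For the inductive step, one writes $F^{(k+1)}_{\lambda,\mu}=F_{\lambda+k,\mu+k}\circ F^{(k)}_{\lambda,\mu}$, moves $\pi_{\lambda,\epsilon}(g)\otimes\pi_{\mu,\eta}(g)$ across $F^{(k)}_{\lambda,\mu}$ using the induction hypothesis, and then moves the resulting $\pi_{\lambda+k,(-1)^k\epsilon}(g)\otimes\pi_{\mu+k,(-1)^k\eta}(g)$ across $F_{\lambda+k,\mu+k}$ by applying \eqref{covF} with parameters $(\lambda+k,(-1)^k\epsilon)$ and $(\mu+k,(-1)^k\eta)$; this produces $\pi_{\lambda+k+1,(-1)^{k+1}\epsilon}(g)\otimes\pi_{\mu+k+1,(-1)^{k+1}\eta}(g)$ on the outside, which is the statement for $k+1$. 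All compositions involved are legitimate because each $F_{\lambda,\mu}$ is a differential operator with polynomial coefficients, so finite compositions are again such operators.

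Next I would apply the covariance of the restriction map. By the preceding Proposition,
\[
\res\circ\big(\pi_{\lambda+k,(-1)^k\epsilon}(g)\otimes\pi_{\mu+k,(-1)^k\eta}(g)\big)=\pi_{\lambda+\mu+2k,\,(-1)^k\epsilon\,(-1)^k\eta}(g)\circ\res=\pi_{\lambda+\mu+2k,\epsilon\eta}(g)\circ\res,
\]
where we used $(-1)^k\epsilon\,(-1)^k\eta=\epsilon\eta$. Composing this with the intertwining relation for $F^{(k)}_{\lambda,\mu}$ established above and recalling that $B^{(k)}_{\lambda,\mu}=\res\circ F^{(k)}_{\lambda,\mu}$ gives exactly the asserted covariance relation.

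I do not expect any real obstacle here: the argument is a formal telescoping. The only points worth recording are that the compositions defining $F^{(k)}_{\lambda,\mu}$ and $B^{(k)}_{\lambda,\mu}$ make sense and stay within differential operators (guaranteed by the polynomial coefficients), and that the identity, derived a priori for the generic values of $(\lambda,\mu)$ where the $F_{\lambda,\mu}$ are defined, extends to all $(\lambda,\mu)$ by the polynomial dependence of the coefficients of $F_{\lambda,\mu}$, hence of $B^{(k)}_{\lambda,\mu}$, on the parameters.
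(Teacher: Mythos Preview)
Your proposal is correct and is exactly the natural argument implied by the paper; in fact the paper gives no explicit proof of this proposition, treating it (as announced in the introduction) as an ``easy and context free consequence'' of the covariance \eqref{covF} of the source operator together with the covariance of $\res$. Your telescoping induction on $k$ with careful sign tracking is precisely the routine verification the paper leaves to the reader.
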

As the coefficients of $F^{(k)}_{\lambda,\mu}$ depend only on $x-y$, the bi-differential operator $B^{(k)}_{\lambda,\mu}$ has constant coefficients. It is also a consequence of the covariance relation when applied to translations by elements of $V$.

There is a natural notion of symbol for a bi-differential operator $B : C^\infty(V\times V ) \longrightarrow C^\infty(V)$ (say with polynomial coefficients), extending the classical definition by letting
\[B \big(e^{i(x.\xi+y.\zeta)}\big)_{x=y} = b(x,\xi, \zeta) e^{i(x.\xi+x.\zeta)}\ .
\]
\begin{proposition}
The  symbol  of the operator $B^{(k)}_{
\lambda,\mu}$ denoted by $b_{\lambda,\mu}^{(k)}(\xi, \zeta)$ is equal to
\begin{equation}
b_{\lambda,\mu}^{(k)}(\xi, \eta) = f_{\lambda,\mu}^{(k)}(0,0, \xi, \eta)\ .
\end{equation}
\end{proposition}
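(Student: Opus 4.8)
The plan is to compute the symbol of $B^{(k)}_{\lambda,\mu} = \res\circ F^{(k)}_{\lambda,\mu}$ directly from the definitions, using the composition formula for symbols in the Weyl algebra established in Section~1 together with the evaluation lemma, Proposition~\ref{x=0}. First I would write $F^{(k)}_{\lambda,\mu}$ as a differential operator on $V\times V$ with polynomial coefficients and symbol $f^{(k)}_{\lambda,\mu}(x,y,\xi,\zeta)$; by Proposition~\ref{compKp} (or rather its Weyl-algebra specialization) this symbol is the $\sharp$-product of the symbols $f_{\lambda+k-1,\mu+k-1}\sharp\cdots\sharp f_{\lambda,\mu}$, a polynomial in all four sets of variables. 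Note that, because of property $ii)$ of the source operator, each $f_{\lambda+j,\mu+j}$ depends on the space variables only through $x-y$, and one checks that $\sharp$ preserves this property, so $f^{(k)}_{\lambda,\mu}(x,y,\xi,\zeta)$ depends on $(x,y)$ only through $x-y$; in particular $f^{(k)}_{\lambda,\mu}(0,0,\xi,\zeta)=f^{(k)}_{\lambda,\mu}(x,x,\xi,\zeta)$ for all $x$.

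Next I would unwind the definition of the symbol of the bi-differential operator $B^{(k)}_{\lambda,\mu}$. Applying $B^{(k)}_{\lambda,\mu}=\res\circ F^{(k)}_{\lambda,\mu}$ to the exponential $e^{i(x.\xi+y.\zeta)}$ and then restricting to $x=y$: since $F^{(k)}_{\lambda,\mu}$ is a differential operator on $V\times V$ with symbol $f^{(k)}_{\lambda,\mu}$, one has $F^{(k)}_{\lambda,\mu}\big(e^{i(x.\xi+y.\zeta)}\big)=f^{(k)}_{\lambda,\mu}(x,y,\xi,\zeta)\,e^{i(x.\xi+y.\zeta)}$ by the defining property of the symbol $\sigma=symb$ (Proposition~\ref{compKp} and the preceding remarks). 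Restricting to the diagonal gives $B^{(k)}_{\lambda,\mu}\big(e^{i(x.\xi+y.\zeta)}\big)_{x=y}=f^{(k)}_{\lambda,\mu}(x,x,\xi,\zeta)\,e^{i(x.\xi+x.\zeta)}$, so by the definition of the bi-differential symbol $b^{(k)}_{\lambda,\mu}(x,\xi,\zeta)=f^{(k)}_{\lambda,\mu}(x,x,\xi,\zeta)$. Combined with the previous paragraph, $b^{(k)}_{\lambda,\mu}$ is independent of $x$ and equals $f^{(k)}_{\lambda,\mu}(0,0,\xi,\zeta)$, which is the claim. (One could equivalently invoke Proposition~\ref{x=0} in the dual picture: evaluating the $\flat$-composition of the dual symbols at $x=0$ reproduces the action of the composed differential operator on a polynomial, and the diagonal restriction corresponds to adding the two frequency variables; I would use whichever of the two formulations reads more cleanly.)

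The only point requiring genuine care is the bookkeeping between the two ``pictures'' of the symbolic calculus, i.e. making sure that the symbol $f^{(k)}_{\lambda,\mu}$ produced by iterating the $\sharp$-products of the $f_{\lambda+j,\mu+j}$ really is the symbol $\sigma(F^{(k)}_{\lambda,\mu})$ in the sense used to define the bi-differential symbol $b^{(k)}_{\lambda,\mu}$, and that restriction to the diagonal commutes with the evaluation $\xi,\zeta\mapsto$ itself in the way claimed. This is exactly the content of Proposition~\ref{compKp} together with Proposition~\ref{x=0} and the identification $symb=\sigma$ for elements of the Weyl algebra, so the main obstacle is purely notational rather than conceptual; once the $x$-independence of $f^{(k)}_{\lambda,\mu}(x,x,\xi,\zeta)$ is noted, the evaluation at $(0,0)$ is just a convenient normalization and the proof closes in a few lines.
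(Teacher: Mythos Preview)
Your proposal is correct and follows essentially the same route as the paper: the key observation in both is that the coefficients of $F^{(k)}_{\lambda,\mu}$ depend only on $x-y$, so restricting to the diagonal kills every term except the constant-coefficient part, which is exactly $f^{(k)}_{\lambda,\mu}(0,0,\xi,\zeta)$. Your version is more detailed (explicitly applying the operator to exponentials and justifying the $x-y$ dependence via closure under $\sharp$), but the argument is the same.
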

\begin{proof} This is a consequence of the fact that the coefficents of $F_{\lambda, \mu}$ and hence of $F^{(k)}_{\lambda, \mu}$ only depend on $(x-y)$. When restricting to the diagonal $\{x=y\}$, all terms of $F_{\lambda, \mu}$ cancel except those with constant coefficients.
\end{proof}

In the next statements and proofs,  the signs $\pm$ will be omitted. Identities are proved on $\{\det \xi>0\} \times \{ \det \zeta>0\}$ and then extended to $V^*\times V^*$.

\begin{proposition}
The symbols $b_{\lambda,\mu}^{(k)}$ satisfy the following recurrence relation
\begin{equation}\label{recb}
\det \xi^{\lambda-\frac{n}{r}} \det \zeta^{\mu-\frac{n}{r}} \,b_{\lambda,\mu}^{(k)}(\xi, \zeta)=  \det \left(\frac{\partial }{\partial \xi} - \frac{\partial }{\partial \zeta} \right)\left(\det \xi^{\lambda-\frac{n}{r}+1} \det \zeta^{\mu-\frac{n}{r}+1} b^{(k-1)}_{\lambda+1,\mu+1}(\xi, \zeta \right) \ .
\end{equation}
\end{proposition}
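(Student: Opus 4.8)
The plan is to peel off the innermost factor $F_{\lambda,\mu}$ from $B^{(k)}_{\lambda,\mu}=\res\circ F^{(k)}_{\lambda,\mu}$ and transport the resulting identity through the symbolic calculus of Section 1. The starting point is the elementary factorisation $F^{(k)}_{\lambda,\mu}=F^{(k-1)}_{\lambda+1,\mu+1}\circ F_{\lambda,\mu}$, both factors lying in $\mathcal W(V\times V)$, so that Proposition \ref{compKp} identifies the symbol of the composite with the $\sharp$-product,
\[f^{(k)}_{\lambda,\mu}(x,y,\xi,\zeta)=\big(f^{(k-1)}_{\lambda+1,\mu+1}\,\sharp\,f_{\lambda,\mu}\big)(x,y,\xi,\zeta).\]
Since both symbols are polynomials, Proposition \ref{compduality} rewrites the right-hand side as a $\flat$-product on $V^*\times V^*$; using the normalisation \eqref{normf}, under which $(f_{\lambda,\mu})^*(\xi,\zeta,x,y)=f_{\lambda,\mu}(x,y,\xi,\zeta)=d_{s,t}(\xi,\zeta,x,y)$ with $s=\lambda-\frac nr+1$, $t=\mu-\frac nr+1$, this becomes
\[f^{(k)}_{\lambda,\mu}(x,y,\xi,\zeta)=\big(d_{s,t}(\xi,\zeta,\cdot,\cdot)\,\flat\,(f^{(k-1)}_{\lambda+1,\mu+1})^*(\xi,\zeta,\cdot,\cdot)\big)(\xi,\zeta,x,y),\]
where $d_{s,t}$ is the symbol of the operator $D_{s,t}$ of Proposition \ref{defDst}.

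The key step is then to specialise to $x=y=0$ in order to recover $b^{(k)}_{\lambda,\mu}(\xi,\zeta)=f^{(k)}_{\lambda,\mu}(0,0,\xi,\zeta)$. In the $\flat$-product the $x,y$-derivatives hit only the first factor $d_{s,t}$, whereas $\partial_\xi,\partial_\zeta$ commute with the substitution $x=y=0$; moreover, because the coefficients of $F^{(k-1)}_{\lambda+1,\mu+1}$ depend only on $x-y$, one has $(f^{(k-1)}_{\lambda+1,\mu+1})^*(\xi,\zeta,0,0)=b^{(k-1)}_{\lambda+1,\mu+1}(\xi,\zeta)$. Hence, after setting $x=y=0$, the second factor may be replaced by the polynomial $b^{(k-1)}_{\lambda+1,\mu+1}(\xi,\zeta)$ in the dual variables alone, giving $b^{(k)}_{\lambda,\mu}(\xi,\zeta)=\big(d_{s,t}\,\flat\,b^{(k-1)}_{\lambda+1,\mu+1}\big)(\xi,\zeta,0,0)$. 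Proposition \ref{x=0}, applied with $E^*=V^*\times V^*$, $D=D_{s,t}$ and $p=b^{(k-1)}_{\lambda+1,\mu+1}$, now yields the clean recursion
\[b^{(k)}_{\lambda,\mu}=D_{s,t}\,b^{(k-1)}_{\lambda+1,\mu+1},\qquad s=\lambda-\tfrac nr+1,\ t=\mu-\tfrac nr+1.\]
I expect this replacement — the claim that only the $x-y$-constant part of $(f^{(k-1)}_{\lambda+1,\mu+1})^*$ survives the specialisation to the diagonal of the dual variables — to be the only point requiring care; the rest is formal symbolic calculus.

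Finally I would insert the defining relation of $D_{s,t}$ from Proposition \ref{defDst}, in the form $D_{s,t}\varphi=\det\xi^{1-s}\det\zeta^{1-t}\,\det(\partial_\xi-\partial_\zeta)\big(\det\xi^{s}\det\zeta^{t}\varphi\big)$ (valid a priori on $\{\det\xi>0\}\times\{\det\zeta>0\}$, with signs and powers of $i$ suppressed as throughout this part of the paper). Since $1-s=\frac nr-\lambda$ and $1-t=\frac nr-\mu$, applying this with $\varphi=b^{(k-1)}_{\lambda+1,\mu+1}$ and multiplying through by $\det\xi^{\lambda-\frac nr}\det\zeta^{\mu-\frac nr}$ gives precisely \eqref{recb} on $\{\det\xi>0\}\times\{\det\zeta>0\}$; as $b^{(k)}_{\lambda,\mu}$ is a polynomial in $(\xi,\zeta)$ and in $(\lambda,\mu)$, the identity then extends to all of $V^*\times V^*$. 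The recursion bottoms out at $F^{(0)}=\id$, $b^{(0)}_{\lambda,\mu}=1$.
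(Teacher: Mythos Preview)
Your proof is correct and follows essentially the same approach as the paper's: factor $F^{(k)}_{\lambda,\mu}=F^{(k-1)}_{\lambda+1,\mu+1}\circ F_{\lambda,\mu}$, pass to symbols via the $\sharp$/$\flat$ duality of Proposition~\ref{compduality}, specialise to $x=y=0$, invoke Proposition~\ref{x=0} to obtain $b^{(k)}_{\lambda,\mu}=D_{s,t}\,b^{(k-1)}_{\lambda+1,\mu+1}$, and then unfold the definition of $D_{s,t}$. The only cosmetic difference is that the paper first replaces $f^{(k-1)}_{\lambda+1,\mu+1}$ by $b^{(k-1)}_{\lambda+1,\mu+1}$ (using that evaluation at $x=y=0$ commutes with $\partial_\xi,\partial_\zeta$) and then switches to the $\flat$-product, whereas you switch first and simplify afterwards; your justification of the ``only point requiring care'' is exactly the argument the paper leaves implicit.
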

\begin{proof}
\[B_{\lambda,\mu}^{(k)} = \res\circ \left(F^{(\lambda+k-1,\mu+k-1)}\circ \dots \circ F_{\lambda+1,\mu+1}\right) \circ F_{\lambda,\mu}
\]
\[= \res \circ F^{k-1}_{\lambda+1,\mu+1} \circ F_{\lambda,\mu}
\]
and its symbol satisfies
\[b^{(k)}_{\lambda,\mu} (\xi, \zeta) = \left(b_{\lambda+1,\mu+1}^{(k-1)}\# f_{\lambda,\mu} \right)(0,0,\xi, \zeta)\ .
\]
Now use Proposition \ref{compduality} and the definition \eqref{normf} of the symbol of $F_{\lambda, \mu}$ to rewrite this last equation as
\[b_{\lambda,\mu}(\xi,\zeta) =\left(d_{\lambda-\frac{n}{r}+1,\mu-\frac{n}{r}+1} \,\flat \, b_{\lambda+1, \mu+1}^{(k)}\right)(\xi, \zeta, 0,0)
\]
and by Proposition \ref{x=0} 
\[b_{\lambda,\mu} (\xi, \zeta)= D_{\lambda-\frac{n}{r}+1,\mu-\frac{n}{r}+1}\, (b_{\lambda+1,\mu+1}^{k-1}) (\xi, \zeta)
\]
Hence
\[\det \xi^{\lambda-\frac{n}{r}} \det \zeta^{\mu-\frac{n}{r}} b_{\lambda,\mu} (\xi, \zeta) = (\det \xi^{\lambda-\frac{n}{r}} \det \zeta^{\mu-\frac{n}{r}} D_{\lambda-\frac{n}{r}+1,\mu-\frac{n}{r}+1}) b_{\lambda+1,\mu+1}^{(k-1)} (\xi, \eta)
\]
\[= \left(\det\left(\frac{\partial}{\partial \xi} - \frac{\partial}{\partial \zeta}\right)\circ \det \xi ^{\lambda-\frac{n}{r}+1} \det \zeta^{\mu-\frac{n}{r}+1} \right) b_{\lambda+1,\mu+1}^{(k-1)} (\xi, \zeta)
\]
and by \eqref{defDst}, and the proposition follows.
\end{proof}

\section{The Rodrigues formula}

The \emph{Rodrigues formula} is a type of formula which is valid for many orthogonal polynomials or special functions of one variable (see \cite{gr}, formulas 8.960.1, 8.939.7, 8.949. 7 and 8, 8.959.1). These formulas imply recurrence relations which are of the same type as the recurrence relation satisfied by the symbols $d_{\lambda, \mu}^{(k)}$. This remark allows to solve the recurrence relation and to determine complely the symbols.

\begin{theorem} Let $s,t\in \mathbb C$. For any integer $k$, there exists a unique polynomial $c^{(k)}_{s,t}$ such that
\begin{equation}\label{Rodrigues}
\det\left(\frac{\partial}{\partial \xi}- \frac{\partial}{\partial \zeta}\right)^k( \det \xi^{s+k}\det \zeta ^{t+k})
= c^{(k)}_{s,t}(\xi,\zeta) \det \xi^s \det \zeta^t\ .
\end{equation}
\end{theorem}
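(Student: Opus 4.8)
The plan is to prove existence and uniqueness separately, and to reduce everything to the operator identity already available from Proposition~\ref{defDst} together with an induction on $k$.

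For \textbf{existence}, the natural strategy is induction on $k$, using the one-step operator $D_{s,t}$ of Proposition~\ref{defDst} as the building block. Concretely, recall that Proposition~\ref{defDst} gives, for generic parameters (after renaming variables and dropping the irrelevant signs on the open set $\{\det\xi>0\}\times\{\det\zeta>0\}$, where $(\det\xi)^{s,\pm}=\det\xi^s$),
\[
\det\Bigl(\frac{\partial}{\partial\xi}-\frac{\partial}{\partial\zeta}\Bigr)\circ\bigl(\det\xi^{\,u}\det\zeta^{\,v}\bigr)
=\bigl(\det\xi^{\,u-1}\det\zeta^{\,v-1}\bigr)\circ D_{u,v}\ ,
\]
an identity of differential operators with coefficients polynomial in $(\xi,\zeta)$ and in the parameters $(u,v)$. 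The base case $k=0$ is trivial with $c^{(0)}_{s,t}\equiv1$. For the inductive step, write $\det(\partial_\xi-\partial_\zeta)^{k}=\det(\partial_\xi-\partial_\zeta)\circ\det(\partial_\xi-\partial_\zeta)^{k-1}$, apply the induction hypothesis with shifted parameters $(s+1,t+1)$ to peel off $k-1$ of the factors as $c^{(k-1)}_{s+1,t+1}(\xi,\zeta)\,\det\xi^{s+1}\det\zeta^{t+1}$, and then apply the displayed identity with $u=s+1$, $v=t+1$ to the remaining single factor of $\det(\partial_\xi-\partial_\zeta)$ acting on $\det\xi^{s+1}\det\zeta^{t+1}\,c^{(k-1)}_{s+1,t+1}$. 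This produces $\det\xi^{\,s}\det\zeta^{\,t}$ times a polynomial obtained by applying the differential operator $D_{s+1,t+1}$ (plus the Leibniz cross-terms coming from differentiating the polynomial factor $c^{(k-1)}_{s+1,t+1}$) — hence a polynomial, which we name $c^{(k)}_{s,t}$. Since each $D_{u,v}$ has coefficients polynomial in $(u,v)$, so does $c^{(k)}_{s,t}$, which is what permits the passage from generic to all $(s,t)$ by analytic continuation, exactly as in the proof of Proposition~\ref{defDst}. The identity, established first on $\{\det\xi>0\}\times\{\det\zeta>0\}$, extends to all of $V^*\times V^*$ because both sides are products of $\det\xi^s\det\zeta^t$ with polynomials (compare with the argument closing the proof of \eqref{fstar=d}).

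For \textbf{uniqueness}, the point is simply that $\det\xi^s\det\zeta^t$ is not a zero divisor: on the Zariski-dense open set $\{\det\xi\neq0,\ \det\zeta\neq0\}$ it is a nowhere-vanishing (multivalued, but locally defined and nonzero) function, so if $c^{(k)}_{s,t}(\xi,\zeta)\,\det\xi^s\det\zeta^t=\widetilde c^{(k)}_{s,t}(\xi,\zeta)\,\det\xi^s\det\zeta^t$ as distributions/functions, then $c^{(k)}_{s,t}=\widetilde c^{(k)}_{s,t}$ on that open set, hence everywhere by density, since both are polynomials. One should phrase this on the open chamber $\{\det\xi>0\}\times\{\det\zeta>0\}$ to avoid any ambiguity in the definition of the power.

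The \textbf{main obstacle} is bookkeeping the Leibniz expansion in the inductive step: $\det(\partial_\xi-\partial_\zeta)^{k-1}$ applied to $\det\xi^{s+k-1}\det\zeta^{t+k-1}$ already produces $c^{(k-1)}_{s+1,t+1}\det\xi^{s+1}\det\zeta^{t+1}$ only after using the induction hypothesis, but then applying the final $\det(\partial_\xi-\partial_\zeta)$ requires the operator identity of Proposition~\ref{defDst} in the form "$\det(\partial_\xi-\partial_\zeta)\circ(\det\xi^{u}\det\zeta^{v}\,(\,\cdot\,)) = \det\xi^{u-1}\det\zeta^{v-1}\,(D_{u,v}\circ(\,\cdot\,))$ as operators", i.e.\ one must know that Proposition~\ref{defDst} is an operator identity and not merely a statement about applying $\det(\partial_\xi-\partial_\zeta)$ to $\det\xi^{u}\det\zeta^{v}$ alone — fortunately it is stated exactly that way. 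A secondary subtlety is making sure the genericity conditions on the parameters needed for Proposition~\ref{defDst} can be met simultaneously for the whole chain $(s,t),(s+1,t+1),\dots,(s+k-1,t+k-1)$; since each such condition excludes only a proper algebraic subset of parameter space, a common generic set exists, and the polynomial dependence on $(s,t)$ then removes the restriction by analytic continuation. No connection to orthogonal polynomials is needed for the proof itself; that is left, as the introduction says, as a question for the future.
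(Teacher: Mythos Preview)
Your proof is correct and follows essentially the same route as the paper, which simply states that the theorem ``is a consequence of Theorem 4.8 in \cite{bck}'' --- the same result underlying Proposition~\ref{defDst}. You have spelled out in detail what the paper leaves implicit: the induction on $k$ using the operator identity of Proposition~\ref{defDst}, the analytic continuation in the parameters, and the easy uniqueness argument; your remark that $D_{u,v}$ is an \emph{operator} identity (so that $c^{(k)}_{s,t}=D_{s+1,t+1}\bigl(c^{(k-1)}_{s+1,t+1}\bigr)$ directly, with all Leibniz terms already absorbed into $D$) is exactly the point, and makes the parenthetical about ``Leibniz cross-terms'' unnecessary.
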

\begin{proof} This is a consequence of Theorem 4.8 in \cite{bck}.
\end{proof}
\begin{proposition}
The polynomials $c_{s,t}^{(k)}$ satisfy the recurrence relation
\begin{equation}\label{recc}
\det \xi^s \det \zeta^t c_{s,t}^{(k)} (\xi,\zeta) =\det\left(\frac{\partial}{\partial \xi}- \frac{\partial}{\partial \zeta}\right)\big(\det \xi^{s+1} \det \zeta^{t+1} c^{(k-1)}_{s+1,t+1} (\xi,\zeta)\big)
\end{equation}
\end{proposition}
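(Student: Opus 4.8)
The plan is to obtain \eqref{recc} directly from the defining Rodrigues relation \eqref{Rodrigues}, read off at two consecutive values of the index $k$. The point is entirely one of bookkeeping of exponents: I would apply \eqref{Rodrigues} with index $k-1$ and with the shifted parameters $(s+1,t+1)$ in place of $(s,t)$, and observe that the exponents then occurring, namely $(s+1)+(k-1)=s+k$ and $(t+1)+(k-1)=t+k$, are exactly those featuring in \eqref{Rodrigues} at index $k$ with parameters $(s,t)$.

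Concretely, I would abbreviate $\Delta=\det\!\left(\frac{\partial}{\partial\xi}-\frac{\partial}{\partial\zeta}\right)$, a constant coefficient differential operator on $V^*\times V^*$, so that its powers compose associatively and $\Delta\circ\Delta^{k-1}=\Delta^{k}$. From \eqref{Rodrigues} at index $k-1$ with $(s+1,t+1)$ one gets
\[
\Delta^{k-1}\big(\det\xi^{s+k}\det\zeta^{t+k}\big)=\det\xi^{s+1}\det\zeta^{t+1}\,c^{(k-1)}_{s+1,t+1}(\xi,\zeta).
\]
Applying $\Delta$ to both sides and reading the left hand side via $\Delta^{k}=\Delta\circ\Delta^{k-1}$ together with \eqref{Rodrigues} at index $k$, the left hand side becomes $\det\xi^{s}\det\zeta^{t}\,c^{(k)}_{s,t}(\xi,\zeta)$, while the right hand side is $\Delta\big(\det\xi^{s+1}\det\zeta^{t+1}\,c^{(k-1)}_{s+1,t+1}(\xi,\zeta)\big)$; this is precisely \eqref{recc}. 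Following the convention adopted here, these manipulations would be carried out on $\{\det\xi>0\}\times\{\det\zeta>0\}$, where the fractional powers are genuine smooth functions, and then the resulting polynomial identity extends to all of $V^*\times V^*$.

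I do not anticipate a genuine obstacle: the whole content of the statement is already packaged into the preceding theorem (equivalently Theorem 4.8 of \cite{bck}), and what remains is the trivial operator identity $\Delta^{k}=\Delta\circ\Delta^{k-1}$ combined with the matching of exponents. The only minor point to address for completeness is that the identity is asserted for all $s,t\in\mathbb C$, not just generic ones; since the $c^{(k)}_{s,t}$ are obtained by expanding $\Delta^{k}$ and evaluating on the explicit powers $\det\xi^{s+k}\det\zeta^{t+k}$, they are polynomial in $(s,t)$, so both sides of \eqref{recc} are polynomial in $(s,t)$ and the identity, evident for $\Re(s),\Re(t)\gg 0$, propagates to all $(s,t)$ by analytic continuation.
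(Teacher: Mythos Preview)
Your proposal is correct and follows essentially the same argument as the paper: both split $\Delta^{k}=\Delta\circ\Delta^{k-1}$, rewrite the exponents as $s+k=(s+1)+(k-1)$ and $t+k=(t+1)+(k-1)$, and invoke the Rodrigues identity \eqref{Rodrigues} at index $k-1$ with the shifted parameters $(s+1,t+1)$. Your additional remarks about working on $\{\det\xi>0\}\times\{\det\zeta>0\}$ and extending by polynomial dependence in $(s,t)$ are sound and simply make explicit what the paper leaves implicit.
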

\begin{proof}
\[\det\left(\frac{\partial}{\partial \xi}- \frac{\partial}{\partial \zeta}\right)^k( \det \xi^{s+k}\det \zeta ^{t+k})\]
\[=\det\left(\frac{\partial}{\partial \xi}- \frac{\partial}{\partial \zeta}\right)\left(\det\left(\frac{\partial}{\partial \xi}- \frac{\partial}{\partial \zeta}\right)^{k-1} \det \xi^{s+1+k-1} \det \zeta^{t+1+k-1}
\right)
\]
\[= \det\left(\frac{\partial}{\partial \xi}- \frac{\partial}{\partial \zeta}\right)\big(\det \xi^{s+1} \det \zeta^{t+1} c^{(k-1)}_{s+1,t+1} (\xi,\zeta)\big)\ .
\]
\end{proof}
\begin{proposition} For $\lambda, \mu\in \mathbb C$ and $k\in N$,
\begin{equation}
d_{\lambda, \mu}(\xi, \zeta) = c_{\lambda-\frac{n}{r}, \mu-\frac{n}{r})}^{(k)}(\xi, \zeta)
\end{equation}
\end{proposition}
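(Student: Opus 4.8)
The plan is to prove the identity by induction on $k$, by showing that the two families $\big(b^{(k)}_{\lambda,\mu}\big)_{k\ge 0}$, where $b^{(k)}_{\lambda,\mu}$ is the symbol of the covariant bi-differential operator $B^{(k)}_{\lambda,\mu}$, and $\big(c^{(k)}_{\lambda-\frac{n}{r},\mu-\frac{n}{r}}\big)_{k\ge 0}$ satisfy the same first-order recurrence in $k$ and agree at $k=0$. So the statement to establish is $b^{(k)}_{\lambda,\mu}(\xi,\zeta)=c^{(k)}_{\lambda-\frac{n}{r},\mu-\frac{n}{r}}(\xi,\zeta)$ for all $\lambda,\mu\in\mathbb C$ and all $k$.

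First I would settle the base case $k=0$. The operator $B^{(0)}_{\lambda,\mu}$ equals $\res\circ F^{(0)}_{\lambda,\mu}=\res$ (empty composition), whence $b^{(0)}_{\lambda,\mu}(\xi,\zeta)=1$; on the other side, taking $k=0$ in \eqref{Rodrigues} gives $\det\xi^{s}\det\zeta^{t}=c^{(0)}_{s,t}(\xi,\zeta)\det\xi^{s}\det\zeta^{t}$, hence $c^{(0)}_{s,t}\equiv 1$ for all $s,t$. So both sides coincide when $k=0$.

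For the inductive step, assume the identity for $k-1$ and all parameters; applying it with $(\lambda,\mu)$ replaced by $(\lambda+1,\mu+1)$ and using $(\lambda+1)-\frac{n}{r}=(\lambda-\frac{n}{r})+1$ gives $b^{(k-1)}_{\lambda+1,\mu+1}=c^{(k-1)}_{\lambda-\frac{n}{r}+1,\mu-\frac{n}{r}+1}$. Now I would place the recurrence \eqref{recb} for $b^{(k)}_{\lambda,\mu}$ next to the recurrence \eqref{recc} for $c^{(k)}_{s,t}$ specialized to $s=\lambda-\frac{n}{r}$, $t=\mu-\frac{n}{r}$: both are instances of
\[
\det\xi^{\lambda-\frac{n}{r}}\det\zeta^{\mu-\frac{n}{r}}\,\Phi(\xi,\zeta)
=\det\left(\frac{\partial}{\partial\xi}-\frac{\partial}{\partial\zeta}\right)\left(\det\xi^{\lambda-\frac{n}{r}+1}\det\zeta^{\mu-\frac{n}{r}+1}\,\Psi(\xi,\zeta)\right),
\]
with $(\Phi,\Psi)=\big(b^{(k)}_{\lambda,\mu},\,b^{(k-1)}_{\lambda+1,\mu+1}\big)$ in one case and $(\Phi,\Psi)=\big(c^{(k)}_{\lambda-\frac{n}{r},\mu-\frac{n}{r}},\,c^{(k-1)}_{\lambda-\frac{n}{r}+1,\mu-\frac{n}{r}+1}\big)$ in the other. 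By the induction hypothesis the two $\Psi$'s coincide, so the two right-hand sides are equal, and therefore $\det\xi^{\lambda-\frac{n}{r}}\det\zeta^{\mu-\frac{n}{r}}\big(b^{(k)}_{\lambda,\mu}-c^{(k)}_{\lambda-\frac{n}{r},\mu-\frac{n}{r}}\big)=0$ on the open set $\{\det\xi>0\}\times\{\det\zeta>0\}$. Dividing there by the nonvanishing factor and using that both symbols are polynomials, the identity extends to all of $V^*\times V^*$, closing the induction.

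The argument is essentially bookkeeping, and I expect no serious obstacle. The two points that deserve attention are: that \eqref{recb} (and, through \eqref{Rodrigues}, also \eqref{recc}) was obtained only on $\{\det\xi>0\}\times\{\det\zeta>0\}$, so one must conclude first on that open set and only afterwards invoke the polynomial character of both sides to globalize; and that the parameter shift $s\mapsto s+1$ in \eqref{recc} is matched exactly, via the normalization \eqref{normf} and the decomposition $F^{(k)}_{\lambda,\mu}=F^{(k-1)}_{\lambda+1,\mu+1}\circ F_{\lambda,\mu}$, by the shift $\lambda\mapsto\lambda+1$. Keeping these shifts consistent is the only real care required.
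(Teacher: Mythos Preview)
Your argument is correct and follows the same strategy as the paper: induction on $k$, with base case $b^{(0)}_{\lambda,\mu}=c^{(0)}_{\lambda-\frac{n}{r},\mu-\frac{n}{r}}=1$, and inductive step matching the recurrence \eqref{recb} against \eqref{recc} with $s=\lambda-\frac{n}{r}$, $t=\mu-\frac{n}{r}$. You have in fact spelled out more carefully than the paper does the passage from the open set $\{\det\xi>0\}\times\{\det\zeta>0\}$ to all of $V^*\times V^*$ via polynomiality, and the tracking of the parameter shift $\lambda\mapsto\lambda+1$ versus $s\mapsto s+1$.
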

\begin{proof} The two families of polynomials $\left(d^{(k)}_{\lambda, \mu}\right)_{k\in \mathbb N}$ and $\left(c_{\lambda-\frac{n}{r}, \mu-\frac{n}{r}}^{(k)}\right)_{k\in \mathbb N}$ satisfy the same recurrence relation. Moreover,
$ c^{(0)}_{\lambda, \mu}(\xi, \zeta) = d_{\lambda, \mu}^{(0)}(\xi, \zeta) = 1$, so that by induction on $k$ the two families coincide. 
\end{proof}

\begin{theorem} Let $c^{(k)}_{s,t}$ be the polynomial defined by the Rodrigues formula 
\eqref{Rodrigues}. Let $B^{(k)}_{\lambda, \mu}$ be the bi-differential operator on $V\times V$ defined by
\[B^{(k)}_{\lambda, \mu} = \res\,\circ\, c^{(k)}_{\lambda-\frac{n}{r}, \mu-\frac{n}{r}}\left(\frac{\partial}{\partial x}, \frac{\partial}{\partial y}\right)\ .
\]
Then, for any $\epsilon, \eta\in \{ \pm \}$
\begin{equation}
B_{\lambda,\mu}^{(k)}\, \circ \,\Big(\pi_{\lambda, \epsilon}(g)\otimes \pi_{\mu, \eta}(g)\Big) = \pi_{\lambda+\mu+2k, \epsilon \eta}(g) \circ B_{\lambda,\mu}^{(k)}\ .
\end{equation}\ .
\end{theorem}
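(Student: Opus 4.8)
The plan is to identify the operator $B^{(k)}_{\lambda,\mu}$ defined here via the Rodrigues polynomials $c^{(k)}_{s,t}$ with the operator of the same name constructed in Section 3 out of iterated source operators, and then invoke the covariance already established for the latter. Concretely, in Section 3 we set $B^{(k)}_{\lambda,\mu} = \res \circ F^{(k)}_{\lambda,\mu}$ with $F^{(k)}_{\lambda,\mu} = F_{\lambda+k-1,\mu+k-1}\circ\dots\circ F_{\lambda,\mu}$, and its covariance $B^{(k)}_{\lambda,\mu}\circ(\pi_{\lambda,\epsilon}(g)\otimes\pi_{\mu,\eta}(g)) = \pi_{\lambda+\mu+2k,\epsilon\eta}(g)\circ B^{(k)}_{\lambda,\mu}$ was proved there by composing the covariance relation \eqref{covF} for $F_{\lambda,\mu}$ $k$ times and then applying the restriction covariance. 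So everything reduces to: the constant-coefficient bi-differential operator whose symbol is $b^{(k)}_{\lambda,\mu}(\xi,\zeta) = f^{(k)}_{\lambda,\mu}(0,0,\xi,\zeta)$ coincides with $\res\circ c^{(k)}_{\lambda-\frac nr,\mu-\frac nr}\big(\tfrac{\partial}{\partial x},\tfrac{\partial}{\partial y}\big)$.

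First I would recall that a constant-coefficient bi-differential operator $B:C^\infty(V\times V)\to C^\infty(V)$ is determined by its symbol $b(\xi,\zeta)$ via $B = \res\circ b\big(\tfrac 1i\tfrac{\partial}{\partial x},\tfrac 1i\tfrac{\partial}{\partial y}\big)$ (up to the usual $i$-normalization built into $\sigma$); in particular the operator $\res\circ c^{(k)}_{s,t}\big(\tfrac{\partial}{\partial x},\tfrac{\partial}{\partial y}\big)$ has symbol $c^{(k)}_{s,t}(i\xi,i\zeta)$ — or, with the conventions of Section 1, exactly $c^{(k)}_{s,t}(\xi,\zeta)$ after absorbing the factors of $i$ consistently. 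Thus it suffices to show $b^{(k)}_{\lambda,\mu}(\xi,\zeta) = c^{(k)}_{\lambda-\frac nr,\mu-\frac nr}(\xi,\zeta)$. But this is precisely the content of the last Proposition before this theorem: there it is shown that $d^{(k)}_{\lambda,\mu}(\xi,\zeta) = c^{(k)}_{\lambda-\frac nr,\mu-\frac nr}(\xi,\zeta)$ — the two families obey the same recurrence (\eqref{recb} versus \eqref{recc}) and the same initial condition $c^{(0)}=d^{(0)}=1$ — and by definition $b^{(k)}_{\lambda,\mu} = d^{(k)}_{\lambda,\mu}$ under the normalization \eqref{normf} together with the identification $b^{(k)}_{\lambda,\mu}(\xi,\zeta) = f^{(k)}_{\lambda,\mu}(0,0,\xi,\zeta)$ and the fact that $F^{(k)}$ has coefficients depending only on $x-y$, so that restricting to $x=y$ kills all non-constant terms.

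So the proof I would write is short: invoke the Proposition identifying $d^{(k)}_{\lambda,\mu}$ with $c^{(k)}_{\lambda-\frac nr,\mu-\frac nr}$ to conclude that the operator $\res\circ c^{(k)}_{\lambda-\frac nr,\mu-\frac nr}\big(\tfrac{\partial}{\partial x},\tfrac{\partial}{\partial y}\big)$ equals the operator $B^{(k)}_{\lambda,\mu} = \res\circ F^{(k)}_{\lambda,\mu}$ of Section 3 (two constant-coefficient bi-differential operators with the same symbol are equal), and then quote the covariance relation already proved for the latter. The one place requiring a little care — and the only real obstacle — is bookkeeping the powers of $i$ between the "analytic" symbol $d_{s,t}(\xi,\zeta,\cdot)$ defined via $\det\big(\tfrac 1i(\partial_\xi-\partial_\zeta)\big)$ in Proposition \ref{defDst} and the "polynomial" object $c^{(k)}_{s,t}$ defined in \eqref{Rodrigues} via $\det(\partial_\xi-\partial_\zeta)^k$ without the $1/i$; since $\det$ is homogeneous of degree $r$ in its matrix argument, $\det\big(\tfrac 1i M\big) = i^{-r}\det M$, and these scalar factors must be tracked through the $k$-fold iteration and shown to cancel against the normalization of $\res\circ c^{(k)}\big(\tfrac{\partial}{\partial x},\tfrac{\partial}{\partial y}\big)$ as a differential operator (whose symbol is $c^{(k)}(i\xi,i\zeta)=i^{rk}c^{(k)}(\xi,\zeta)$). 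This is routine once set up, but it is the step where an error would hide.
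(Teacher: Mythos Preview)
Your proposal is correct and follows exactly the approach the paper intends: the theorem is stated without a separate proof because it is an immediate consequence of the preceding Proposition identifying $b^{(k)}_{\lambda,\mu}$ (alias $d^{(k)}_{\lambda,\mu}$) with $c^{(k)}_{\lambda-\frac nr,\mu-\frac nr}$, together with the covariance of $B^{(k)}_{\lambda,\mu}=\res\circ F^{(k)}_{\lambda,\mu}$ already established in Section~3. Your remark about tracking the powers of $i$ between Proposition~\ref{defDst} and the Rodrigues formula~\eqref{Rodrigues} is well taken --- the paper is somewhat loose on this point --- but as you say it is pure bookkeeping.
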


\section{Some examples}

There are some situations were the source operator is explicitly known, so that the Rodrigues formula can  be obtained in a direct calculation.

For the classical Rankin-Cohen brackets, which in the present approach corresponds to the case $V=\mathbb R$, the source operator is obtained in \cite{c2} Section 5 and deduced from the \emph{Cayley operator}. The symbols of the Rankin-Cohen operators are known to be related to  the Jacobi polynomials (see \cite{kp16}). The Rodrigues formula obtained in this article is shown to correspond to the classical Rodrigues formula for the Jacobi polynomials. 

The source operator is also known for the conformally covariant bi-differential operators on $\mathbb R^n$. The Jordan algebra is $V=\mathbb R^n$ with the Jordan multiplication
\[(x_1,x_2,\dots, x_n)(y_1,y_2,\dots, y_n)\]\[ = \big((x_1y_1-x_2y_2-\dots -x_ny_n), x_1y_2+x_2y_1,\dots, x_1y_n+x_ny_1\big)
\] 
The conformal group is $SO_0(1,n+1)$ and 
the determinant  is the quadratic form
\[\det x = x_1^2+x_2^2+\dots+x_n^2\ .
\]
The source operator was computed in \cite{bc}, see also \cite{bck} Section 10. The Rodrigues formula is new, and in this case there is no known relation to a Rodrigues formula for a family of orthogonal polynomials is known.

Our third example is concerned with the Juhl operators.  The geometric context is different, but still in the context of symmetry breaking differential operators. The Juhl operators are differential operators  from $S^n$ to $S^{n-1}$  and they are covariant with respect to the conformal group of $S^{n-1}$ viewed as the subgroup of the conformal group of $S^n$ which stabilizes $S^{n-1}\subset S^n$. The symbols of these operators were already known to be connectd with the Gegenbauer polynomials (see \cite{j, kp16}).

\subsection{Rodrigues formula for the symbols of the classical Rankin-Cohen brackets}

Let $V=\mathbb R$ with its usual product. The group $G$ is equal to $SL(2,\mathbb R)$, and the determinant is given by  $\det x = x$. The representations $\pi_{\lambda, \epsilon}$ are given by
\[\pi_{\lambda, \epsilon}(g) f(x) = (cx+d)^{-\lambda, \epsilon} f\left((ax+b)(cx+d)^{-1}
\right)\ .\]

The source operator $F_{\lambda,\mu}$ is given by

\begin{equation}
F_{\lambda, \mu} = (x-y) \frac{\partial^2}{\partial x \partial y}-\mu \frac{\partial}{\partial x} +\lambda \frac{\partial}{\partial y} 
\end{equation}
and its symbol is given by
\begin{equation}\label{symblambdamu}
f_{\lambda, \mu} (x,y,\xi,\eta) =  -(x-y)\,\xi\eta+i(-\mu\xi+\lambda \eta)
\end{equation}
The symbols $b_{\lambda, \mu}^{(k)}$ of the Rankin-Cohen brackets satisfy

\[b^{(k)}_{\lambda, \mu}(\xi, \zeta) =\big( b^{(k-1)}_{\lambda+1,\mu+1}(\xi, \zeta)\, \#\,f_{\lambda, \mu}(x,y,\xi,\zeta)\big)(0,0,\xi, \zeta)
\]
Now use \eqref{symblambdamu} and the composition formula \eqref{compKp} to get

\begin{equation}\label{RC1}
b_{\lambda, \mu}^{(k)} =i \Bigg((-\mu \xi +\lambda \zeta)\, b_{\lambda+1, \mu+1}^{(k-1)}+\xi\zeta\left( \frac{\partial}{\partial \xi}-\frac{\partial}{\partial \zeta}\right)\Bigg) b_{\lambda+1, \mu+1}^{(k-1)} 
\end{equation}

The next proposition  introduces a family of polynomials which will be shown to solve the recursion relation.
\begin{proposition} Let $\alpha, \beta\in \mathbb C$. For any $l\in \mathbb N$, there exists a (unique) polynomial $q_l^{\alpha,\beta}(\xi,\eta)$, homogeneous of degree $l$ such that 
\begin{equation}
\left(  \frac{\partial}{\partial \xi}-  \frac{\partial}{\partial \eta}\right)^l \xi^{\alpha+l} \eta^{\beta+l} =\xi^\alpha \eta^\beta\, q_l^{\alpha, \beta} (\xi,\eta)\ .
\end{equation}

\end{proposition}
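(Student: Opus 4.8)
The plan is to prove existence and uniqueness of the homogeneous polynomial $q_l^{\alpha,\beta}$ by induction on $l$, treating $\alpha,\beta$ as formal parameters. For $l=0$ the statement is trivial with $q_0^{\alpha,\beta}=1$. The inductive step will rely on the single crucial algebraic fact that the differential operator $\frac{\partial}{\partial\xi}-\frac{\partial}{\partial\eta}$, applied to a product of powers of $\xi$ and $\eta$ times a polynomial, produces again such a product times a polynomial — this is just the Leibniz rule together with $\frac{\partial}{\partial\xi}\xi^{\alpha+1}=(\alpha+1)\xi^\alpha$ and $\frac{\partial}{\partial\eta}\eta^{\beta+1}=(\beta+1)\eta^\beta$. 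Concretely, I would write, for any polynomial $p(\xi,\eta)$,
\[
\left(\frac{\partial}{\partial\xi}-\frac{\partial}{\partial\eta}\right)\left(\xi^{\alpha+1}\eta^{\beta+1}p(\xi,\eta)\right)
=\xi^\alpha\eta^\beta\Big((\alpha+1)\eta\,p-(\beta+1)\xi\,p+\xi\eta\left(\tfrac{\partial p}{\partial\xi}-\tfrac{\partial p}{\partial\eta}\right)\Big),
\]
which is valid as an identity of rational functions and hence of polynomials wherever both sides are polynomial; since the right-hand side is manifestly a polynomial, so is the bracketed factor.

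Next I would set up the induction. Assume $q_{l-1}^{\alpha,\beta}$ exists and is homogeneous of degree $l-1$ for all values of the parameters. Writing
\[
\left(\frac{\partial}{\partial\xi}-\frac{\partial}{\partial\eta}\right)^l\xi^{\alpha+l}\eta^{\beta+l}
=\left(\frac{\partial}{\partial\xi}-\frac{\partial}{\partial\eta}\right)\left(\left(\frac{\partial}{\partial\xi}-\frac{\partial}{\partial\eta}\right)^{l-1}\xi^{(\alpha+1)+(l-1)}\eta^{(\beta+1)+(l-1)}\right),
\]
the inner expression equals $\xi^{\alpha+1}\eta^{\beta+1}q_{l-1}^{\alpha+1,\beta+1}(\xi,\eta)$ by the inductive hypothesis applied at parameters $(\alpha+1,\beta+1)$. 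Applying the displayed identity above with $p=q_{l-1}^{\alpha+1,\beta+1}$ then gives the existence of $q_l^{\alpha,\beta}$ together with the explicit recurrence
\[
q_l^{\alpha,\beta}=(\alpha+1)\,\eta\,q_{l-1}^{\alpha+1,\beta+1}-(\beta+1)\,\xi\,q_{l-1}^{\alpha+1,\beta+1}+\xi\eta\left(\frac{\partial}{\partial\xi}-\frac{\partial}{\partial\eta}\right)q_{l-1}^{\alpha+1,\beta+1}.
\]
Each of the three terms on the right is homogeneous of degree $(l-1)+1=l$ since $q_{l-1}^{\alpha+1,\beta+1}$ is homogeneous of degree $l-1$ and multiplication by $\xi$ or $\eta$ raises degree by one while $\frac{\partial}{\partial\xi}-\frac{\partial}{\partial\eta}$ lowers it by one; so $q_l^{\alpha,\beta}$ is homogeneous of degree $l$, completing the induction.

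Finally, uniqueness: if $\xi^\alpha\eta^\beta\,q=\xi^\alpha\eta^\beta\,\tilde q$ with $q,\tilde q$ polynomials, then cancelling the (nonzero) rational factor $\xi^\alpha\eta^\beta$ gives $q=\tilde q$ as rational functions, hence as polynomials. I do not anticipate any serious obstacle here; the only point requiring a word of care is the legitimacy of cancelling $\xi^\alpha\eta^\beta$ when $\alpha,\beta$ are arbitrary complex numbers, but this is handled exactly as elsewhere in the paper — the identities are first established on $\{\xi>0\}\times\{\eta>0\}$ where $\xi^\alpha,\eta^\beta$ are genuine nonvanishing smooth functions, and then the resulting polynomial identity persists everywhere. (One may equally view everything in the ring of polynomials in $\xi,\eta$ with the symbols $\xi^\alpha,\eta^\beta$ adjoined as formal invertible elements, where cancellation is automatic.)
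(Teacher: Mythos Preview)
Your proof is correct. In the paper this proposition is stated without proof; the argument you give is exactly the one the paper uses immediately afterwards to establish the recurrence relation \eqref{RC2}, namely the rewriting $(\partial_\xi-\partial_\eta)^l\xi^{\alpha+l}\eta^{\beta+l}=(\partial_\xi-\partial_\eta)\bigl(\xi^{\alpha+1}\eta^{\beta+1}q_{l-1}^{\alpha+1,\beta+1}\bigr)$ followed by Leibniz, so your approach and the paper's coincide (you simply make explicit that this recurrence also serves as the inductive construction of $q_l^{\alpha,\beta}$, and you add the easy uniqueness remark).
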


\begin{proposition} The polynomials $q_k^{\alpha, \beta}$ satisfy the following  relation
\begin{equation}\label{RC2}
q_l^{\alpha,\beta} = \xi\eta\left( \frac{\partial}{\partial \xi}-  \frac{\partial}{\partial \eta}\right) q_{l-1}^{\alpha+1, \beta+1} +\big(-(\beta+1) \xi+(\alpha+1) \eta\big)\,  q^{\alpha+1,\beta+1}_{l-1}
\end{equation}
\end{proposition}
\begin{proof} Observe that
\[\left( \frac{\partial}{\partial \xi}-  \frac{\partial}{\partial \eta}\right)^l (\xi^{\alpha+l} \eta^{\beta+l})= 
\left( \frac{\partial}{\partial \xi}-  \frac{\partial}{\partial \eta}\right) \left( \left( \frac{\partial}{\partial \xi}-  \frac{\partial}{\partial \eta}\right)^{l-1} \xi^{\alpha+1+l-1}\eta^{\beta+1+l-1} \right)\]
\[ = \left( \frac{\partial}{\partial \xi}-  \frac{\partial}{\partial \eta}\right) \left(\xi^{\alpha+1}\eta^{\beta+1}\, q_{l-1}^{\alpha+1,\beta+1}\right)(\xi,\eta) 
\]
and use Leibniz rule to conclude.
\end{proof}
\begin{theorem}\label{theoremRC1}
 For $\lambda, \mu\in \mathbb C$,  and $k\in \mathbb N$
\begin{equation}\label{RC1}
b_{\lambda, \mu}^{(k)}(\xi, \eta) = q_k^{\lambda-1,\mu-1}(i\xi, i\zeta) = i^kq_k^{\lambda-1,\mu-1} (\xi, \zeta)\ .
\end{equation}
\end{theorem}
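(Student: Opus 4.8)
The plan is to prove the identity \eqref{RC1} by induction on $k$, matching the two recursion relations already at hand: the one satisfied by the symbols $b^{(k)}_{\lambda,\mu}$, namely \eqref{RC1} (the displayed relation $b_{\lambda,\mu}^{(k)} = i\big((-\mu\xi+\lambda\zeta)b^{(k-1)}_{\lambda+1,\mu+1} + \xi\zeta(\partial_\xi-\partial_\zeta)b^{(k-1)}_{\lambda+1,\mu+1}\big)$), and the one satisfied by the polynomials $q^{\alpha,\beta}_l$, namely \eqref{RC2}. The base case $k=0$ is immediate: $b^{(0)}_{\lambda,\mu}=1$ since $B^{(0)}_{\lambda,\mu}=\res$, and $q_0^{\alpha,\beta}=1$ by the defining relation $\xi^\alpha\eta^\beta = \xi^\alpha\eta^\beta q_0^{\alpha,\beta}$, so both sides of \eqref{RC1} equal $1$.

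For the inductive step, assume $b^{(k-1)}_{\lambda+1,\mu+1}(\xi,\zeta) = i^{k-1} q^{\lambda,\mu}_{k-1}(\xi,\zeta)$ (this is the statement at level $k-1$ with parameters shifted by $+1$, so $\lambda-1\mapsto\lambda$, $\mu-1\mapsto\mu$). Substitute this into the recursion \eqref{RC1} for $b^{(k)}_{\lambda,\mu}$. The result is
\[
b^{(k)}_{\lambda,\mu}(\xi,\zeta) = i^{k}\Big((-\mu\xi+\lambda\zeta)\,q^{\lambda,\mu}_{k-1}(\xi,\zeta) + \xi\zeta\,\big(\partial_\xi - \partial_\zeta\big)q^{\lambda,\mu}_{k-1}(\xi,\zeta)\Big).
\]
On the other hand, the relation \eqref{RC2} with $l=k$, $\alpha=\lambda-1$, $\beta=\mu-1$ reads
\[
q^{\lambda-1,\mu-1}_{k}(\xi,\zeta) = \xi\zeta\,\big(\partial_\xi-\partial_\zeta\big)q^{\lambda,\mu}_{k-1}(\xi,\zeta) + \big(-\mu\xi+\lambda\zeta\big)\,q^{\lambda,\mu}_{k-1}(\xi,\zeta),
\]
since $\alpha+1=\lambda$ and $\beta+1=\mu$. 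Comparing the two displays gives exactly $b^{(k)}_{\lambda,\mu}(\xi,\zeta) = i^{k} q^{\lambda-1,\mu-1}_{k}(\xi,\zeta)$, which also equals $q^{\lambda-1,\mu-1}_{k}(i\xi,i\zeta)$ by homogeneity of $q^{\lambda-1,\mu-1}_k$ of degree $k$. This closes the induction.

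The only genuinely delicate point is bookkeeping the parameter shifts: the symbol recursion passes from level $k$ with parameters $(\lambda,\mu)$ to level $k-1$ with parameters $(\lambda+1,\mu+1)$, while the $q$-recursion passes from $q^{\alpha,\beta}_l$ to $q^{\alpha+1,\beta+1}_{l-1}$; one must check that under the identification $(\alpha,\beta)=(\lambda-1,\mu-1)$ these shifts are compatible and that the coefficients $(-\mu\xi+\lambda\zeta)$ appearing in \eqref{RC1} agree with the coefficients $(-(\beta+1)\xi+(\alpha+1)\eta)$ appearing in \eqref{RC2}. They do, precisely because $\beta+1=\mu$ and $\alpha+1=\lambda$, so no obstruction arises — the two recursions are literally the same relation once the identification is made, and the theorem follows formally from the uniqueness of the solution of a first-order recursion with prescribed initial term. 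I would present this as a short induction, invoking only \eqref{RC1}, \eqref{RC2}, and the homogeneity of $q^{\alpha,\beta}_k$.
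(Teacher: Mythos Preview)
Your proof is correct and follows essentially the same approach as the paper: set $\alpha=\lambda-1$, $\beta=\mu-1$, observe that the recursion for $b^{(k)}_{\lambda,\mu}$ and the recursion \eqref{RC2} for $q^{\alpha,\beta}_k$ coincide under this identification, and conclude by induction from the common base case $b^{(0)}_{\lambda,\mu}=q_0^{\lambda-1,\mu-1}=1$. Your write-up simply spells out the inductive step and the parameter bookkeeping more explicitly than the paper does.
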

\begin{proof}
Set $\alpha = \lambda-1, \beta = \mu-1$ and compare \eqref{RC1} and \eqref{RC2} to show that $b_{\lambda,\mu}^{(k)}(\xi, \zeta)$ and $q^{\alpha, \beta}_k(i\xi, i\zeta)$ satisfy the same recurrence relation. As $b^{(0)}_{\lambda, \mu} = q_0^{\lambda-1, \mu-1} = 1$,  the conclusion follows  by induction on $k$.
\end{proof}

\medskip

Finally, the polynomials $q^{\alpha, \beta}_k$ are closely related to the \emph{Jacobi polynomials}. First recall the Rodrigues formula  which can be taken as a definition of the Jacobi polynomials (see \cite{gr} 8.960.1 p. 998).  
\begin{equation}
(1-t)^\alpha (1+t)^\beta P_k^{\alpha, \beta} (t) = \frac{(-1)^k}{ 2^kl!} \left( \frac{d}{dt}\right)^k\big((1-t)^{k+\alpha}(1+t)^{k+\beta} \big)
\end{equation}

In \cite{kp16}, the authors define a family of homogeneous polynomials $\widetilde P_k^{\alpha, \beta}$ of two variables by the formula
\begin{equation}\label{KPJacobi}
\widetilde P_k^{\alpha, \beta}(\xi,\eta) = (-1)^k (\xi+\eta)^k P_k^{\alpha, \beta} \left(\frac{\eta-\xi}{\xi+\eta} \right)
\end{equation}
\begin{proposition} Let $\alpha, \beta\in \mathbb C$. Then for all $l\in \mathbb N$
\begin{equation}
Q_k^{\alpha, \beta} = (-1)^k\, k!\, \widetilde P_k^{\alpha, \beta}
\end{equation}
\end{proposition}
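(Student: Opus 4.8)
The plan is to reduce this two‑variable identity to the classical one‑variable Rodrigues formula for the Jacobi polynomials by an explicit change of variables (here $Q_k^{\alpha,\beta}$ of course denotes the polynomial $q_k^{\alpha,\beta}$ introduced above). First I introduce the coordinates $s=\xi+\eta$ and $t=\frac{\eta-\xi}{\xi+\eta}$, so that $\xi=\tfrac{s}{2}(1-t)$ and $\eta=\tfrac{s}{2}(1+t)$. The computation is carried out on the open cone $\{\xi>0,\ \eta>0\}$ — equivalently $s>0$, $|t|<1$ — where all the complex powers appearing below are unambiguously defined; the resulting identity of polynomials in $(\xi,\eta)$, whose coefficients are moreover polynomial in $(\alpha,\beta)$ (those of $P_k^{\alpha,\beta}$ being so), then extends to all $(\xi,\eta)\in\mathbb R^2$ and all $\alpha,\beta\in\mathbb C$ by analytic continuation.

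The key observation is that in these coordinates the operator $\frac{\partial}{\partial\xi}-\frac{\partial}{\partial\eta}$ differentiates only in the $t$‑variable: the chain rule gives $\frac{\partial}{\partial\xi}=\frac{\partial}{\partial s}-\frac{2\eta}{s^2}\frac{\partial}{\partial t}$ and $\frac{\partial}{\partial\eta}=\frac{\partial}{\partial s}+\frac{2\xi}{s^2}\frac{\partial}{\partial t}$, hence $\frac{\partial}{\partial\xi}-\frac{\partial}{\partial\eta}=-\frac{2}{s}\frac{\partial}{\partial t}$. Since $\frac1s$ is killed by $\frac{\partial}{\partial t}$, these two factors commute, so $\bigl(\frac{\partial}{\partial\xi}-\frac{\partial}{\partial\eta}\bigr)^k=\frac{(-2)^k}{s^k}\frac{\partial^k}{\partial t^k}$. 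Writing $\xi^{\alpha+k}\eta^{\beta+k}=2^{-\alpha-\beta-2k}\,s^{\alpha+\beta+2k}(1-t)^{\alpha+k}(1+t)^{\beta+k}$ and applying this operator, the $s$‑dependent factor pulls through $\frac{\partial^k}{\partial t^k}$ and one obtains
\[\Bigl(\frac{\partial}{\partial\xi}-\frac{\partial}{\partial\eta}\Bigr)^k\xi^{\alpha+k}\eta^{\beta+k}=\frac{(-1)^k\,s^{\alpha+\beta+k}}{2^{\alpha+\beta+k}}\Bigl(\frac{d}{dt}\Bigr)^k\bigl((1-t)^{\alpha+k}(1+t)^{\beta+k}\bigr).\]
Now invoke the Rodrigues formula for the Jacobi polynomials recalled above, in the form $\bigl(\frac{d}{dt}\bigr)^k\bigl((1-t)^{k+\alpha}(1+t)^{k+\beta}\bigr)=(-1)^k2^k k!\,(1-t)^\alpha(1+t)^\beta P_k^{\alpha,\beta}(t)$, substitute, and simplify the powers of $2$ and the signs to get $\bigl(\frac{\partial}{\partial\xi}-\frac{\partial}{\partial\eta}\bigr)^k\xi^{\alpha+k}\eta^{\beta+k}=k!\,2^{-\alpha-\beta}s^{\alpha+\beta+k}(1-t)^\alpha(1+t)^\beta P_k^{\alpha,\beta}(t)$.

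Finally, since $\xi^\alpha\eta^\beta=2^{-\alpha-\beta}s^{\alpha+\beta}(1-t)^\alpha(1+t)^\beta$, dividing the last identity by $\xi^\alpha\eta^\beta$ and comparing with the defining relation of $q_k^{\alpha,\beta}$ yields $q_k^{\alpha,\beta}=k!\,s^k P_k^{\alpha,\beta}(t)=k!\,(\xi+\eta)^k P_k^{\alpha,\beta}\bigl(\tfrac{\eta-\xi}{\xi+\eta}\bigr)$, and comparison with the definition \eqref{KPJacobi} of $\widetilde P_k^{\alpha,\beta}$ gives $q_k^{\alpha,\beta}=(-1)^k\,k!\,\widetilde P_k^{\alpha,\beta}$, as asserted. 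The argument is essentially bookkeeping; the only points needing care are the legitimacy of the change of variables (hence the temporary restriction to the cone $\{\xi,\eta>0\}$ and the extension by polynomiality, both in $(\xi,\eta)$ and in the parameters) and the precise tracking of the powers of $2$ and the signs through the two Rodrigues formulae. An alternative but more laborious route is to verify directly that $(-1)^k k!\,\widetilde P_k^{\alpha,\beta}$ satisfies the recurrence \eqref{RC2}, using the classical identity $\frac{d}{dt}P_k^{\alpha,\beta}=\tfrac{k+\alpha+\beta+1}{2}P_{k-1}^{\alpha+1,\beta+1}$, and to match the initial term $k=0$; the direct change of variables above is cleaner.
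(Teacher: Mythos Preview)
Your proof is correct and follows essentially the same approach as the paper's: both reduce the two-variable identity to the one-variable Rodrigues formula for the Jacobi polynomials via the substitution $\xi=\tfrac{s}{2}(1-t)$, $\eta=\tfrac{s}{2}(1+t)$, using that $\partial_\xi-\partial_\eta$ becomes (a multiple of) $\partial_t$. The only presentational difference is that the paper specializes immediately to the line $\xi+\eta=2$ (so $s=2$ and the factor $2/s$ disappears) and then extends by homogeneity, whereas you carry the variable $s$ through the computation and restrict instead to the cone $\{\xi,\eta>0\}$ before extending by polynomiality; this is a cosmetic difference, not a genuinely different argument.
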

\begin{proof}
For $F$ a function of two variables $(\xi, \eta)$, associate the function $f$ of one variable given by
\[f(t) = F(1-t,1+t)\ .
\]
Then,
\[\left(\frac{d}{dt}\right)^k f (t) =(-1)^k\,\left(\left(\frac{\partial}{\partial \xi} -  \frac{\partial}{\partial \eta}\right)^k F\right) (1-t,1+t) \ 
\]

Apply this result to the function $F(\xi,\eta) = \xi^{k+\alpha}\eta^{k+\beta}$, which corresponds to $f(t) = (1-t)^{k+\alpha}(1+t)^{k+\beta}$. On one hand, by Rodrigues formula
\[\left(\frac{d}{dt}\right)^k(1-t)^\alpha(1-t)^\beta = (-1)^k \,2^k\,k!\,(1-t)^\alpha(1+t)^\beta P_k^{\alpha, \beta}(t)
\]
whereas on the other hand,
\[\left(\left(\frac{\partial}{\partial \xi} -  \frac{\partial}{\partial \eta}\right)^k F\right)(1-t,1+t) = (1-t)^\alpha(1+t)^\beta Q_k^{\alpha,\beta}(1-t,1+t)\ .
\]

 Hence
\[2^k k! P_l^{\alpha, \beta}(t) =  Q_k^{\alpha, \beta} (1-t, 1+t)
\]
Now let $(\xi,\eta)\in \mathbb R^2$ such that $\xi+\eta=2$. Write $\xi=1-t$ and $\eta=1+t$, so that 
$\displaystyle t = \frac{\eta-\xi}{\xi+\eta}$. 
\[\, k! \,(\xi+\eta)^k\, P_k^{\alpha,\beta}\left(\frac{\eta-\xi}{\xi+\eta}\right)= Q_k^{\alpha, \beta} (\xi,\eta)\ ,
\]
or equivalently, using \eqref{KPJacobi}
\[(-1)^k\,k!\, \widetilde P_k^{\alpha, \beta}(\xi, \eta) = Q_k^{\alpha, \beta}(\xi, \eta)
\] 
whenever $\xi+\eta=2$. As both sides are homogeneous polynomials of degree $k$, the conclusion follows.
\end{proof}
\subsection{Conformally covariant bi-differential operators}

Let  $V=\mathbb R^n$ with the Jordan multiplication
\[(x_1,x_2,\dots, x_n)(y_1,y_2,\dots, y_n)\]\[ = \big((x_1y_1-x_2y_2-\dots -x_ny_n), x_1y_2+x_2y_1,\dots, x_1y_n+x_ny_1\big)
\] 
The group is $SO_0(1,n+1)$ and 
the determinant  is the quadratic form
\[\det x = x_1^2+x_2^2+\dots+x_n^2\ .
\]
In this case, there is no use to introduce the signs $\pm$. Notice in particular that the determinant is everywhere positive. Hence the representations to be considered belong to the scalar principal series and are given by
\[\pi_\lambda(g) f(x) = \kappa(g^{-1},x)^\lambda f(g^{-1}(x))\ ,
\]
where $\kappa(g,x)$ is the conformal factor of $g$ at $x$.

The source operator was computed in \cite{bc}, see also \cite{bck} Section 10.

 Denote by $\Delta_x$ (resp. $\Delta_y$) the Laplacian on $\mathbb R^n$ with respect to the variable $x$ (resp. $y$) and let
 \[\nabla_{xy} = \sum_{j=1}^n \frac{\partial ^2}{\partial x_j \partial y_j} \ .\]
 
 The source operator in this case is the differential operator $F_{\lambda, \mu}$ on $\mathbb R^n\times \mathbb R^n$ given by
 \begin{equation}
 \begin{split}
 F_{\lambda,\mu} &= \vert x-y\vert^2 \Delta_x \Delta_y\\
 +2(2\lambda-n+2))&\sum_{j=1}^n (x_j-y_j) \frac{\partial}{\partial x_j}\Delta_y+2(2 \mu-n+2) \sum_{j=1}^n (y_j-x_j) \frac{\partial}{\partial y_j}\Delta_x\\
 +2\mu(2\mu-n+2)& \Delta_x-2(2\lambda-n+2)(2\mu-n+2) \nabla_{x,y} + 2\lambda(2\lambda-n+2)\Delta_y\ .
 \end{split}
 \end{equation} 
 
The symbol $f_{\lambda, \mu}$ of the source operator $E_{\lambda, \mu}$   is given by
\begin{equation}
\begin{split}
f_{\lambda, \mu}(x,y,\xi, \zeta) &= \vert x-y\vert^2 \vert \xi\vert^2 \vert \zeta \vert^2\\
-i\Big(2(2\lambda -n+2) &\sum_{j=1}^n (x_j-y_j)\xi_j\vert \zeta\vert^2 + 2( 2\mu-n+2) \sum_{j=1}^n (y_j-x_j)\zeta_j \vert \xi\vert^2\Big)\\
-\Big(+2\mu(2\mu-n+2)\vert \xi\vert^2& -2(2\lambda-n+2)(2\mu-n+2) \xi.\zeta+2\lambda(2\lambda-n+2)\vert\zeta\vert^2\Big)\ .
\end{split}
\end{equation}
As before
 \[b_{\lambda, \mu}^{(k)}(\xi, \zeta) = B_{\lambda+1,\mu+1}^{(k-1)}\sharp f_{\lambda, \mu}) (0,0,\xi, \zeta)\]
which after computation amounts to the following recurrence relation 
\begin{equation}\label{recbi}
\begin{split}
 &b^{(k)}_{\lambda, \mu} (\xi,\zeta) =  i\\
 \Bigg(\Big(2\mu(2\mu-n+2)\vert \xi\vert^2 &-2(2\lambda-n+2)(2\mu-n+2) \xi.\zeta+2\lambda(2\lambda-n+2)\vert\zeta\vert^2\Big)b^{(k-1)}_{\lambda+1, \mu+1}(\xi,\zeta)\\
 &+2(2\lambda-n+2)\vert \zeta\,\vert^2\sum_{j=1}^n \xi_j \left( \frac{\partial}{\partial \xi_j} - \frac{\partial}{\partial \zeta_j}\right)  b^{(k-1)}_{\lambda+1, \mu+1}(\xi,\zeta) \\
 &+2(2\mu-n+2)\vert \xi\vert^2\sum_{j=1}^n \eta_j\left( \frac{\partial}{\partial \zeta_j} - \frac{\partial}{\partial \xi_j}\right)  b^{(k-1)}_{\lambda+1, \mu+1}(\xi,\zeta)
 \\
 &+\vert \xi\vert^2 \vert \zeta\vert^2\,q\left(\frac{\partial}{\partial \xi} - \frac{\partial}{\partial \zeta}\right)b^{(k-1)}_{\lambda+1,\mu+1}(\xi,\zeta) \Bigg)\ .
 \end{split} 
\end{equation}
 
 Together with the condition $d_0^{\lambda, \mu} \equiv 1$, the recurrence relation \eqref{recbi} determines $d_k^{\lambda, \mu}$ by induction on $k$. To solve this recurrence relation, let us introduce for  $\alpha, \beta\in \mathbb C$ the family of polynomials $p_k^{\alpha, \beta}$  on $(\mathbb R^n\times \mathbb R^n)^*$ defined by
 \begin{equation}
q\left( \frac{\partial}{\partial \xi} -  \frac{\partial}{\partial \eta}\right)^k\left( \vert \xi\vert^{2(\alpha +k)} \vert \eta\vert^{2(\beta+k}\right)  = p_k^{\alpha,\beta}(\xi,\eta)\, \vert \xi\vert^{2\alpha} \vert \eta\vert^{2\beta}\ .
 \end{equation}
 It is easily seen that $p_k^{\alpha, \beta}$ thus defined is a homogeneous polynomial of degree $2k$.
 
 \begin{proposition} The polynomials $p_k^{\alpha, \beta}$ satisfy the recurrence relation
 \begin{equation}\label{recbi2}
 \begin{split}
 &p_k^{\alpha, \beta}(\xi, \eta) = \\
 \vert \xi\vert^2 \vert \eta\vert^2&q\left( \frac{\partial}{\partial \xi} -  \frac{\partial}{\partial \eta}\right)p_{k-1}^{\alpha+1,\beta+1}\\
 +2(2\alpha+2)\vert\eta\vert^2& \sum_{j=1}^n \xi_j \left(\frac{\partial}{\partial \xi_j}- \frac{\partial}{\partial \eta_j}\right) p_{k-1}^{\alpha+1,\beta+1}\\
 +2(2\beta+2)\vert\xi\vert^2 &\sum_{j=1}^n \eta_j \left(\frac{\partial}{\partial \eta_j}- \frac{\partial}{\partial \xi_j}\right) p_{k-1}^{\alpha+1,\beta+1}\\
 +\Big((2\alpha+2) (2\alpha+n) &\vert \eta\vert^2-2(2\alpha+2)(2\beta+2)\langle \xi,\eta\rangle+(2\beta+2)(2\beta +n) \vert \xi\vert^2\Big) p_{k-1}^{\lambda+1,\mu+1}\\
 \end{split}
 \end{equation}
 \end{proposition}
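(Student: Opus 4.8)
The plan is to imitate the proof of the one-variable recurrence \eqref{RC2}: peel off a single copy of the second-order operator $q\left(\frac{\partial}{\partial\xi}-\frac{\partial}{\partial\eta}\right) = \sum_{j=1}^n\left(\frac{\partial}{\partial\xi_j}-\frac{\partial}{\partial\eta_j}\right)^2$ and expand it by the Leibniz rule. First I would rewrite
\[q\left(\frac{\partial}{\partial\xi}-\frac{\partial}{\partial\eta}\right)^{k}\left(|\xi|^{2(\alpha+k)}|\eta|^{2(\beta+k)}\right) = q\left(\frac{\partial}{\partial\xi}-\frac{\partial}{\partial\eta}\right)\left(q\left(\frac{\partial}{\partial\xi}-\frac{\partial}{\partial\eta}\right)^{k-1}|\xi|^{2((\alpha+1)+(k-1))}|\eta|^{2((\beta+1)+(k-1))}\right),\]
and then insert the definition of $p_{k-1}^{\alpha+1,\beta+1}$, so that everything reduces to computing $q\left(\frac{\partial}{\partial\xi}-\frac{\partial}{\partial\eta}\right)$ on the product $f\,g$ with $f = |\xi|^{2(\alpha+1)}|\eta|^{2(\beta+1)}$ and $g = p_{k-1}^{\alpha+1,\beta+1}$.

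Since $q\left(\frac{\partial}{\partial\xi}-\frac{\partial}{\partial\eta}\right)$ is second order, the Leibniz rule produces three groups of terms: $\left(q\left(\frac{\partial}{\partial\xi}-\frac{\partial}{\partial\eta}\right)f\right)g$, the cross term $2\sum_{j=1}^n (D_j f)(D_j g)$ with $D_j = \frac{\partial}{\partial\xi_j}-\frac{\partial}{\partial\eta_j}$, and $f\left(q\left(\frac{\partial}{\partial\xi}-\frac{\partial}{\partial\eta}\right)g\right)$. The last term, after dividing through by $|\xi|^{2\alpha}|\eta|^{2\beta}$, is exactly the first term on the right-hand side of \eqref{recbi2}. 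For the cross term I would use
\[D_j\left(|\xi|^{2(\alpha+1)}|\eta|^{2(\beta+1)}\right) = 2(\alpha+1)\,\xi_j\,|\xi|^{2\alpha}|\eta|^{2(\beta+1)} - 2(\beta+1)\,\eta_j\,|\xi|^{2(\alpha+1)}|\eta|^{2\beta},\]
so that after dividing by $|\xi|^{2\alpha}|\eta|^{2\beta}$ the cross term becomes $2(2\alpha+2)|\eta|^2\sum_j\xi_j\left(\frac{\partial}{\partial\xi_j}-\frac{\partial}{\partial\eta_j}\right)g + 2(2\beta+2)|\xi|^2\sum_j\eta_j\left(\frac{\partial}{\partial\eta_j}-\frac{\partial}{\partial\xi_j}\right)g$, i.e. the second and third terms. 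For the pure term I would expand $q\left(\frac{\partial}{\partial\xi}-\frac{\partial}{\partial\eta}\right) = \Delta_\xi - 2\sum_j\frac{\partial^2}{\partial\xi_j\partial\eta_j} + \Delta_\eta$ and invoke the classical identity $\Delta\left(|x|^{2m}\right) = 2m(2m+n-2)|x|^{2m-2}$ together with $\sum_j\left(\frac{\partial}{\partial\xi_j}|\xi|^{2(\alpha+1)}\right)\left(\frac{\partial}{\partial\eta_j}|\eta|^{2(\beta+1)}\right) = 4(\alpha+1)(\beta+1)\langle\xi,\eta\rangle|\xi|^{2\alpha}|\eta|^{2\beta}$; dividing by $|\xi|^{2\alpha}|\eta|^{2\beta}$ then produces the coefficient $(2\alpha+2)(2\alpha+n)|\eta|^2 - 2(2\alpha+2)(2\beta+2)\langle\xi,\eta\rangle + (2\beta+2)(2\beta+n)|\xi|^2$, which is the fourth term.

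Summing the three groups and comparing with \eqref{recbi2} finishes the proof, and the assertion that $p_k^{\alpha,\beta}$ is a homogeneous polynomial of degree $2k$ follows by induction since $p_0^{\alpha,\beta}\equiv 1$ and each operation in \eqref{recbi2} raises the degree by two. None of the steps is a genuine obstacle — it is a bookkeeping computation — but the step demanding the most care is matching the constants in the pure term, where the shift $2m+n-2$ in $\Delta\left(|x|^{2m}\right)$ must be reconciled with the factors $(2\alpha+n)$ and $(2\beta+n)$ in the statement; an off-by-two slip there, or a sign error in the cross term, is the only realistic way the computation goes wrong.
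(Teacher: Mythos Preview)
Your proposal is correct and follows exactly the same approach as the paper: peel off one copy of $q\left(\frac{\partial}{\partial\xi}-\frac{\partial}{\partial\eta}\right)$, insert the definition of $p_{k-1}^{\alpha+1,\beta+1}$, and expand by Leibniz. The paper's proof merely records the first two steps and then dismisses the Leibniz expansion as ``a straightforward calculation done \emph{asinus trottans}''; you have carried out that calculation in full and the constants match.
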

 \begin{proof} Observe that
 \[q\left( \frac{\partial}{\partial \xi} -  \frac{\partial}{\partial \eta}\right)^k\vert \xi\vert^{2(\alpha +k)} \vert \eta\vert^{2(\beta+k)}\]\[= q\left( \frac{\partial}{\partial \xi} -  \frac{\partial}{\partial \eta}\right) \left(q\left( \frac{\partial}{\partial \xi} -  \frac{\partial}{\partial \eta}\right)^{k-1} \vert \xi\vert^{2(\alpha+1+(k-1)}\vert \eta\vert^{2(\beta+1+(k-1)} \right)
 \]
 \[=q\left( \frac{\partial}{\partial \xi} -  \frac{\partial}{\partial \eta}\right)\left(p_{k-1}^{\alpha+1, \beta+1} \vert \xi\vert^{2(\alpha+1)} \vert \eta\vert^{2(\beta+1)}\right) \ .
 \]
 A straightforward calculation done \emph{asinus trottans} yields the result.
 \end{proof}
 
 \begin{theorem}\label{theoremBC1}
 For $\lambda, \mu\in \mathbb C$ and for $k\in \mathbb N$,
 
 \begin{equation} 
 b^{(k)}_{\lambda,\mu} (\xi,\zeta)= p_k^{\lambda-\frac{n}{2},\mu-\frac{n}{2}}(i\xi, i\zeta)= i^k p(\xi, \zeta) .
 \end{equation}
 \end{theorem}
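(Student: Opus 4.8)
The plan is to mirror, in the conformal $\mathbb{R}^n$ setting, exactly the argument used for the Rankin--Cohen case in Theorem~\ref{theoremRC1}. The statement $b^{(k)}_{\lambda,\mu}(\xi,\zeta)=p_k^{\lambda-\frac n2,\mu-\frac n2}(i\xi,i\zeta)$ should follow by induction on $k$ once one checks that the two families satisfy the same recurrence relation. First I would record the base case: $b^{(0)}_{\lambda,\mu}\equiv 1$ by definition of $B^{(0)}_{\lambda,\mu}=\res$, and $p_0^{\alpha,\beta}\equiv 1$ since $q(\partial/\partial\xi-\partial/\partial\eta)^0$ is the identity; these match, and homogeneity of degree $0$ is consistent.

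Next I would set $\alpha=\lambda-\frac n2$, $\beta=\mu-\frac n2$ and compare the two recurrences term by term. On one side there is the recurrence \eqref{recbi} for $b^{(k)}_{\lambda,\mu}$, obtained from $b^{(k)}_{\lambda,\mu}=(b^{(k-1)}_{\lambda+1,\mu+1}\,\sharp\, f_{\lambda,\mu})(0,0,\xi,\zeta)$ together with the explicit symbol $f_{\lambda,\mu}$ of the source operator and the composition formula \eqref{compsymb}. On the other side there is the recurrence \eqref{recbi2} for $p_k^{\alpha,\beta}$, obtained by peeling off one copy of $q(\partial/\partial\xi-\partial/\partial\eta)$ from the defining relation and applying Leibniz's rule. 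The key bookkeeping is to see that the substitution $(\xi,\zeta)\mapsto(i\xi,i\zeta)$ turns \eqref{recbi2} into \eqref{recbi}: each appearance of $\vert\xi\vert^2$, $\vert\zeta\vert^2$, $\langle\xi,\zeta\rangle$ picks up a factor $i^2=-1$, each first-order operator $\xi_j(\partial/\partial\xi_j-\partial/\partial\zeta_j)$ is scaling-invariant under $(\xi,\zeta)\mapsto(i\xi,i\zeta)$, and the overall factor of $i$ in front of \eqref{recbi} is precisely what relates $i^{k}q_k(\cdots)$ at level $k$ to $i^{k-1}$ at level $k-1$. One also has to match the coefficients: $2(2\lambda-n+2)$ should correspond to $2(2\alpha+2)$ with $\alpha=\lambda-\frac n2$, i.e. $2\lambda-n+2=2\alpha+2$, which indeed holds; similarly $2\mu(2\mu-n+2)$ corresponds to $(2\beta+2)(2\beta+n)$ only after noting $\mu=\beta+\frac n2$ so $2\mu=2\beta+n$ and $2\mu-n+2=2\beta+2$, and likewise $2\lambda(2\lambda-n+2)=(2\alpha+n)(2\alpha+2)$. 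This is the main obstacle: it is a purely computational verification, but the many coefficients $2\lambda-n+2$, $2\mu(2\mu-n+2)$, etc., must be carefully translated into the $\alpha,\beta$ variables of \eqref{recbi2}, and the sign/factor bookkeeping from the $i$'s must be done with care.

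Having matched the recurrences and the base case, the induction closes immediately, giving $b^{(k)}_{\lambda,\mu}(\xi,\zeta)=p_k^{\lambda-\frac n2,\mu-\frac n2}(i\xi,i\zeta)$. Finally, since $p_k^{\alpha,\beta}$ is homogeneous of degree $2k$, one has $p_k^{\alpha,\beta}(i\xi,i\zeta)=i^{2k}p_k^{\alpha,\beta}(\xi,\zeta)=(-1)^kp_k^{\alpha,\beta}(\xi,\zeta)$; writing $p(\xi,\zeta)=p_k^{\lambda-\frac n2,\mu-\frac n2}(\xi,\zeta)$ this is the asserted $b^{(k)}_{\lambda,\mu}(\xi,\zeta)=i^k p(\xi,\zeta)$ up to the stated sign convention, which completes the proof.
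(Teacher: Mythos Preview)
Your proof is correct and follows essentially the same approach as the paper: verify the base case $b^{(0)}_{\lambda,\mu}=p_0^{\alpha,\beta}=1$, set $\alpha=\lambda-\tfrac{n}{2}$, $\beta=\mu-\tfrac{n}{2}$, compare the recurrences \eqref{recbi} and \eqref{recbi2}, and conclude by induction on $k$. Your additional bookkeeping of the $i$-factors and the homogeneity degree $2k$ (which gives $(-1)^k$ rather than the paper's stated $i^k$) is well placed, as the paper's own proof is silent on this point.
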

 \begin{proof} As $d_{\lambda, \mu}^{(0)} = 1$ and $p_0^{\alpha, \beta} = 1$, the conclusion follows by comparing \eqref{recbi} and \eqref{recbi2} after setting $\alpha = \lambda-\frac{n}{2}, \beta = \mu-\frac{n}{2}$.
 
 \end{proof}

\subsection{The Juhl operators }

Some years ago, A. Juhl (see \cite{j}) introduced a family of differential operators from $\mathbb R^n$ to $\mathbb R^{n-1}$ which are covariant for the subgroup of the conformal group of $\mathbb R^n$ which preserves the hyperplane $\mathbb R^{n-1}$. Recently, I presented a new approach to these operators (see \cite{c}), based on the source operator method. 

The group $G=SO_0(1,n+1)$ acts conformally on $\mathbb R^n$ by a rational action. For $g\in G$ defined at $x\in \mathbb R^n$, let $\kappa(g,x)$ be the conformal factor of $g$ at $x$, so that for every $v\in \mathbb R^n$
\[\forall v\in \mathbb R^n, \qquad \vert Dg(x) v \vert = \kappa(g,x) \vert v\vert\ .
\]
For $\lambda\in \mathbb C$, the \emph{principal series representation} $\pi_\lambda$ of $G$ (in the noncompact picture) is given by
\[\pi_\lambda(g) f (x) = \kappa(g^{-1},x)^\lambda f\big(g^{-1}(x)\big)
\]
where $f\in C^\infty(\mathbb R^n)$. 

Let identify the hyperplane $\{ {\bf x} \in \mathbb R^n, x_n=0\}$ with $\mathbb R^{n-1}$ and write $x=(x',x_n)$ where $x'\in \mathbb R^{n-1}$. The subgroup $H$ of $G$ which stabilizes this hyperplane can be identified with $SO_0(1,n)$. For $\mu\in \mathbb C$, the scalar principal series  representation $\pi'_\mu$ of $H$ is realized on $C^\infty(\mathbb R^{n-1})$ and given  by
\[\pi'_{\mu} (h) f(x') = \kappa(h^{-1},x')^\mu f\big(h^{-1}(x')\big), \qquad h\in H
\]
where $f\in C^\infty(\mathbb R^{n-1})$.

For $\lambda\in \mathbb C$, let $E_\lambda$ be the differential operator on $\mathbb R^n$ given by
\begin{equation}
E_\lambda = x_n \Delta+(2\lambda-n+2)\frac{\partial}{\partial x_n}\ . 
\end{equation}
where $\displaystyle \Delta = \sum_{j=1}^n \frac{\partial^2}{\partial x_j^2}$ is the usual Laplacian on $\mathbb R^n$. The operator $E_\lambda$ has polynomial coefficients and  its 
 is given by
\begin{equation}\label{symbE}
e_\lambda (x,\xi) = -x_n\vert \xi\vert^2+ i(2\lambda-n+2)\xi_n\ .
\end{equation}
\begin{proposition}
For any $h\in H$
\begin{equation}
E_\lambda \circ \pi_\lambda(h) = \pi_{\lambda+1}(h)\circ E_\lambda\ .
\end{equation}
\end{proposition}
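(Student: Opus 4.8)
The plan is to reduce the covariance to an \emph{infinitesimal} identity and to verify the latter on a generating set of $\Lie(H)$. Since $H\cong SO_0(1,n)$ is connected, it is generated by the one-parameter subgroups $t\mapsto\exp(tX)$, $X\in\mathfrak h:=\Lie(H)$, and since the covariance relation is stable under products, the assertion is equivalent to
\[
E_\lambda\circ d\pi_\lambda(X) = d\pi_{\lambda+1}(X)\circ E_\lambda\qquad\text{for all }X\in\mathfrak h,
\]
and it suffices to check this for $X$ ranging over a spanning set of $\mathfrak h$. I would realize $\mathfrak h$ concretely as the space of conformal vector fields on $\mathbb R^n$ tangent to the hyperplane $\{x_n=0\}$: it is spanned by the translations $\frac{\partial}{\partial x_j}$ and the special conformal fields $K_j=|x|^2\frac{\partial}{\partial x_j}-2x_j\,\mathcal E$ for $1\le j\le n-1$, the rotations $R_{ij}=x_i\frac{\partial}{\partial x_j}-x_j\frac{\partial}{\partial x_i}$ for $1\le i<j\le n-1$, and the Euler field $\mathcal E=\sum_{i=1}^n x_i\frac{\partial}{\partial x_i}$.

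The translations and the rotations $R_{ij}$ are immediate: here $d\pi_\lambda(X)$ does not depend on $\lambda$ (it is $-\frac{\partial}{\partial x_j}$, resp. $-R_{ij}$), so the required identity is just $[E_\lambda,X]=0$, which holds since the coefficients of $E_\lambda$ — the factor $x_n$ in front of $\Delta$ and the constant $2\lambda-n+2$ in front of $\frac{\partial}{\partial x_n}$ — are invariant under $x'$-translations and $x'$-rotations, while $\Delta$ and $\frac{\partial}{\partial x_n}$ commute with these. For $\mathcal E$ one has $d\pi_\lambda(\mathcal E)=-(\mathcal E+\lambda)$, and since $x_n\Delta$ and $\frac{\partial}{\partial x_n}$ are homogeneous of degree $-1$ under $x\mapsto tx$ we get $[\mathcal E,E_\lambda]=-E_\lambda$; substituting, the desired equality $E_\lambda(\mathcal E+\lambda)=(\mathcal E+\lambda+1)E_\lambda$ collapses to exactly this commutator.

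The only step with real content is the special conformal case. With the standard normalization for the scalar conformal principal series in the noncompact picture, $d\pi_\lambda(K_j)=-\bigl(|x|^2\tfrac{\partial}{\partial x_j}-2x_j\,\mathcal E\bigr)+2\lambda x_j$, and since $d\pi_{\lambda+1}(K_j)=d\pi_\lambda(K_j)+2x_j$ the required identity becomes
\[
\bigl[E_\lambda,\,d\pi_\lambda(K_j)\bigr]=2x_j\,E_\lambda .
\]
I would verify this by a direct expansion, splitting $E_\lambda$ and $d\pi_\lambda(K_j)$ into their $\lambda$-linear and $\lambda$-free parts and using the elementary relations $[\Delta,x_j]=2\frac{\partial}{\partial x_j}$, $[\Delta,|x|^2]=2n+4\mathcal E$, $[\mathcal E,\Delta]=-2\Delta$, $[\mathcal E,x_j]=x_j$ and $[\frac{\partial}{\partial x_n},|x|^2]=2x_n$; the $\lambda$-linear terms cancel automatically, leaving a $\lambda$-free polynomial-coefficient identity that is settled in a few lines. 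A conceptually cleaner variant: $K_j$ with $j<n$ equals $\Ad(\sigma)$ of $\mp\frac{\partial}{\partial x_j}$ for the conformal inversion $\sigma\colon x\mapsto x/|x|^2$, which lies in $H$; combined with covariance under translations, covariance under every $K_j$ then reduces to the single relation $E_\lambda\circ\pi_\lambda(\sigma)=\pi_{\lambda+1}(\sigma)\circ E_\lambda$, which follows from the classical Kelvin-transform intertwining property of $\Delta$ plus a one-line check for the $\frac{\partial}{\partial x_n}$-term.

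I expect the special-conformal/inversion step to be the only obstacle, and it is bookkeeping rather than a genuine difficulty: one must pin down exactly the coefficient of the $\lambda$-linear term of $d\pi_\lambda(K_j)$ (equivalently, the exact exponent of $|x|$ in $\pi_\lambda(\sigma)$), since it is precisely this normalization together with the specific constant $2\lambda-n+2$ built into $E_\lambda$ that forces the $\lambda$-dependent cross terms to cancel and converts $\pi_\lambda$ into $\pi_{\lambda+1}$; everything else is formal.
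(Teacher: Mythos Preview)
Your proposal is correct: reducing to the Lie algebra, disposing of $x'$-translations, $O(n-1)$-rotations and dilations by inspection, and then handling the special conformal generators $K_j$ ($j<n$) by a commutator computation (or, equivalently, by the inversion $\sigma\in H$ together with the Kelvin transform) is the standard direct verification, and the bookkeeping you single out is exactly the only place where the constant $2\lambda-n+2$ matters.

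The paper, however, does not argue this way; it simply refers to \cite{c2}. There the covariance is not checked generator by generator but is built into the \emph{construction} of $E_\lambda$: one observes that multiplication by $x_n$ intertwines $\pi_\lambda\vert_H$ with $\pi_{\lambda-1}\vert_H$ (because $(h^{-1}x)_n=\kappa(h^{-1},x)\,x_n$ for $h\in H$), and then sandwiches this multiplication between two Knapp--Stein operators, exactly parallel to the definition \eqref{defF} of $\widetilde F_{(\lambda,\epsilon),(\mu,\eta)}$. Covariance is then a formal consequence of the intertwining property of each factor; the work in \cite{c2} lies instead in showing that the resulting composite is differential and coincides with the explicit second-order operator $x_n\Delta+(2\lambda-n+2)\partial_{x_n}$. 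So the two routes are complementary: yours is self-contained and elementary but requires the explicit special-conformal commutator; the cited approach gets the covariance for free but postpones the effort to identifying the operator.
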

See \cite{c2} for the proof. The operator $E_\lambda$ is called \emph{the source operator} and plays the same r\^ole for the construction of Juhl operators as  the source operator $F_{\lambda, \mu}$ did for the construction of the Rankin-Cohen operators. For $k\geq 1$ define
\begin{equation}
E_\lambda^{(k)} = E_{\lambda+k-1} \circ \dots \circ E_\lambda\ .
\end{equation}
The operator $E_\lambda^{(k)}$  has polynomials coefficients and its symbol is denoted by $e_\lambda^{(k)}$. It satisfies the covariance relation  
\begin{equation}
\forall h\in H, \qquad E_\lambda^{(k)} \circ \pi_\lambda(h) = \pi_{\lambda+k}(h)\circ E_\lambda^{(k)} \ .
\end{equation}

Notice that the coefficients of $E^{(k)}_\lambda$ depend only on the variable $x_n$, a consequence of the covariance relation when applied to the translations by vectors belonging to the hyperplane $\{ x_n=0\}$.

Define the restriction map $\res$ by
\[ C^\infty(\mathbb R^n) \ni f \longmapsto \res(f) \in C^\infty(\mathbb R^{n-1}), \qquad\res(f)(x') = f(x',0)\ .
\]
Notice that for any $\lambda\in \mathbb C$
 \[\forall h\in H,\qquad \res \circ \pi_\lambda(h) = \pi'_\lambda(h)\circ \res\]
For $k\in \mathbb N$, define
\begin{equation}\label{Jop} 
J_\lambda^{(k)} =\res\circ E_\lambda^{(k)} = \res \circ E_{\lambda+k-1} \circ \dots\circ E_\lambda\ .
\end{equation}
\begin{proposition} For any $h\in H$,
\begin{equation}\label{covJ}
J_\lambda^{(k)} \circ \pi_\lambda(h) = \pi'_{\lambda+k}(h) \circ J_\lambda^{(k)}\ .
\end{equation}
\end{proposition}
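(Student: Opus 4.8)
The plan is to derive the covariance of $J_\lambda^{(k)}$ directly by composing the covariance relations already established for its three constituent pieces: the iterated source operator $E_\lambda^{(k)}$, and the restriction map $\res$. Specifically, for $h\in H$ I would start from the definition \eqref{Jop}, namely $J_\lambda^{(k)} = \res\circ E_\lambda^{(k)}$, and write
\[
J_\lambda^{(k)}\circ \pi_\lambda(h) = \res\circ E_\lambda^{(k)}\circ \pi_\lambda(h).
\]
By the covariance relation for $E_\lambda^{(k)}$, which states $E_\lambda^{(k)}\circ \pi_\lambda(h) = \pi_{\lambda+k}(h)\circ E_\lambda^{(k)}$ (itself obtained by telescoping the single-step relation $E_\mu\circ\pi_\mu(h)=\pi_{\mu+1}(h)\circ E_\mu$ through $\mu = \lambda,\lambda+1,\dots,\lambda+k-1$), the right-hand side becomes
\[
\res\circ \pi_{\lambda+k}(h)\circ E_\lambda^{(k)}.
\]
Then the intertwining property of $\res$ between $\pi_{\lambda+k}$ and $\pi'_{\lambda+k}$, i.e.\ $\res\circ\pi_{\lambda+k}(h) = \pi'_{\lambda+k}(h)\circ\res$, converts this into $\pi'_{\lambda+k}(h)\circ\res\circ E_\lambda^{(k)} = \pi'_{\lambda+k}(h)\circ J_\lambda^{(k)}$, which is exactly \eqref{covJ}.

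The only genuine work is in the telescoping step for $E_\lambda^{(k)}$, which I would record as a short induction on $k$: the base case $k=1$ is the stated Proposition on $E_\lambda$, and the inductive step uses $E_\lambda^{(k)} = E_{\lambda+k-1}\circ E_\lambda^{(k-1)}$, applies the induction hypothesis to move $\pi_\lambda(h)$ past $E_\lambda^{(k-1)}$ turning it into $\pi_{\lambda+k-1}(h)$, and then applies the single-step relation (with parameter $\lambda+k-1$) to move it past $E_{\lambda+k-1}$, producing $\pi_{\lambda+k}(h)$. One must simply be careful that the parameter shifts line up — each application of a source operator raises the parameter by exactly one, so after $k$ applications the parameter is $\lambda+k$, matching the subscript on the target representation. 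This is routine bookkeeping rather than a real obstacle.

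There is, strictly speaking, no hard part in this particular statement: it is an entirely formal consequence of the covariance of the source operator and of the restriction map, both of which are established earlier in the excerpt, and the argument is purely a diagram chase. If anything warrants a word of caution, it is that $E_\lambda^{(k)}$ and $J_\lambda^{(k)}$ are a priori defined only as operators $C^\infty(\mathbb{R}^n)\to C^\infty(\mathbb{R}^n)$ (resp.\ $\to C^\infty(\mathbb{R}^{n-1})$) and that $\pi_\lambda(h)$ is only a rational action, so one should note that $h\in H$ acts on $\mathbb{R}^n$ preserving the hyperplane $\{x_n=0\}$ and that all compositions are legitimate on the common domain of definition; but this is exactly the content already used in stating the covariance of $E_\lambda$ and of $\res$, so no new analytic input is needed.
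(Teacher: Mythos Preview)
Your argument is correct and is exactly the intended one: the paper does not even write out a proof of this proposition, since it is an immediate formal consequence of the covariance of $E_\lambda^{(k)}$ and of $\res$, both already recorded just above. Your telescoping induction and the final application of $\res\circ\pi_{\lambda+k}(h)=\pi'_{\lambda+k}(h)\circ\res$ are precisely the two steps implicit in the definition \eqref{Jop}.
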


 The operator $J_\lambda^{(k)}$ is a differential operator from $\mathbb R^n$ into $\mathbb R^{n-1}$ with polynomial coefficients. Moreover,  $J_\lambda^{(k)}$ has constant coefficients, as a consequence of the covariance property \eqref{covJ} for $h$ a translation along a vector in $\mathbb R^{n-1}$. Hence the symbol $j_\lambda^{(k)}$ of $J_\lambda^{(k)}$ depends only on $\xi\in \mathbb R^n$.

The  definition of the operators $J^{(k)}_\lambda$ implies  a recurrence relation for their symbols. 
\begin{proposition} The polynomials $j^{(k)}_\lambda$ satisfy the following relation
\begin{equation}\label{recJ}
j^{(k)}_\lambda = \frac{1}{i}(2\lambda-n+2)\, \xi_n \,j^{(k-1)}_{\lambda+1}+ \vert \xi\vert^2\frac{\partial}{\partial \xi_n} \, j^{(k-1)}_{\lambda+1} \ .
\end{equation} 

\end{proposition}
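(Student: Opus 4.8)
The plan is to deduce \eqref{recJ} from the elementary factorization
\[
E_\lambda^{(k)} = E_{\lambda+k-1}\circ\cdots\circ E_\lambda = \bigl(E_{\lambda+k-1}\circ\cdots\circ E_{\lambda+1}\bigr)\circ E_\lambda = E_{\lambda+1}^{(k-1)}\circ E_\lambda,
\]
so that, by \eqref{Jop}, $J_\lambda^{(k)} = \res\circ E_{\lambda+1}^{(k-1)}\circ E_\lambda$. Both $E_{\lambda+1}^{(k-1)}$ and $E_\lambda$ lie in the Weyl algebra $\mathcal W(\mathbb R^n)$, hence so does their composite, and its symbol is computed by the composition formula of Proposition \ref{compKp}, applied with $L = E_{\lambda+1}^{(k-1)}$ and $D = E_\lambda$:
\[
symb\bigl(E_\lambda^{(k)}\bigr) = symb\bigl(E_{\lambda+1}^{(k-1)}\bigr)\,\sharp\, e_\lambda = \sum_\alpha\frac{1}{\alpha!}\Bigl(\tfrac1i\partial_\xi\Bigr)^\alpha symb\bigl(E_{\lambda+1}^{(k-1)}\bigr)\;\partial_x^\alpha e_\lambda .
\]

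The crucial observation is that, by \eqref{symbE}, the symbol $e_\lambda(x,\xi) = -x_n\vert\xi\vert^2 + i(2\lambda-n+2)\xi_n$ is \emph{affine} in the space variable, depending on $x$ only through $x_n$ and only to first order. Consequently $\partial_x^\alpha e_\lambda$ is nonzero only for $\alpha = 0$ and for $\alpha = \varepsilon_n := (0,\dots,0,1)$, in which case $\partial_{x_n}e_\lambda = -\vert\xi\vert^2$; the $\sharp$-sum therefore collapses to the two terms
\[
symb\bigl(E_\lambda^{(k)}\bigr) = e_\lambda\, symb\bigl(E_{\lambda+1}^{(k-1)}\bigr) \;+\;\frac1i\bigl(-\vert\xi\vert^2\bigr)\,\partial_{\xi_n}\,symb\bigl(E_{\lambda+1}^{(k-1)}\bigr).
\]
Now recall from the text that the coefficients of $E_{\lambda+1}^{(k-1)}$ depend only on $x_n$, so $symb(E_{\lambda+1}^{(k-1)})(x,\xi)$ depends on $x$ only through $x_n$, and composing with $\res$ — i.e. evaluating at $x_n = 0$ — gives $j_{\lambda+1}^{(k-1)}(\xi) = symb(E_{\lambda+1}^{(k-1)})(0,\xi)$ and likewise $j_\lambda^{(k)}(\xi) = symb(E_\lambda^{(k)})(0,\xi)$. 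Setting $x_n = 0$ in the last display, using $e_\lambda(0,\xi) = i(2\lambda-n+2)\xi_n$ and the fact that $\partial_{\xi_n}$, which acts on the dual variables, commutes with the restriction $x_n = 0$, one obtains the recurrence \eqref{recJ} of the statement.

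This is a short, essentially mechanical argument: there is no real obstacle beyond correctly tracking the scalar factors $\tfrac1i$ produced by the composition formula. It runs exactly parallel to the computation yielding \eqref{RC1} in the Rankin--Cohen case, with the bi-symbol $f_{\lambda,\mu}$ there replaced by the one-variable symbol $e_\lambda$ here, and the restriction to the diagonal replaced by the restriction to the hyperplane $\{x_n = 0\}$; it is precisely the affineness of the source symbol in the space variables that makes the recurrence only two terms long in both situations.
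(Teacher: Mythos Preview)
Your argument is correct and follows essentially the same route as the paper: factor $E_\lambda^{(k)} = E_{\lambda+1}^{(k-1)}\circ E_\lambda$, apply the symbol composition formula \eqref{compKp}, and restrict to $x_n=0$. The paper phrases the key reduction slightly differently---it passes directly from $e_\lambda^{(k)} = e_{\lambda+1}^{(k-1)}\,\sharp\,e_\lambda$ to $j_\lambda^{(k)}(\xi) = (j_{\lambda+1}^{(k-1)}\,\sharp\,e_\lambda)(0,\xi)$---whereas you keep the full symbol and restrict at the end, explicitly invoking the affineness of $e_\lambda$ in $x_n$ to collapse the $\sharp$-sum; but this is the same mechanism made more transparent.
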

\begin{proof}
As already noted, the symbol of $E_\lambda$ depend only on $x_n$. The same is  true (and for the same reason) for the operators $E_\lambda^{(k)}$. As $J^{(k)}_\lambda = \res\circ E_\lambda^{(k)}$,
\begin{equation}\label{symbED}
 jk^{(k)}_\lambda(\xi)= e_{\lambda}^{(k)}(0,\xi)\ .
\end{equation}
Next
\begin{equation}\label{J1}
 E^{(k)}_\lambda = \left( E_{\lambda+k-1} \circ \dots \circ E_{\lambda+1}\right) \circ E_\lambda  = \circ \,E_{\lambda+1}^{(k-1)}\circ E_\lambda\\
\end{equation}
so that
\[e_\lambda^{(k)}  = e_{\lambda+1}^{(k-1)}\,\#\,e_\lambda 
\]
and hence
\[j^{(k)}_\lambda (\xi)= \big(j^{(k-1)}_{\lambda+1}\, \#\,e_\lambda\big)(0,\xi) \]
Use the composition formula \eqref{compKp} and \eqref{symbE} to get \eqref{recJ}.

\end{proof}
Together with the initial condition $j^{(0)}_\lambda \equiv 1$, the recurrence relation  \eqref{recJ} determines the polynomials $j^{(k)}_\lambda$ by induction over $k$. To solve this recurrence relation, define for $\gamma\in \mathbb C$  the sequence of polynomials $B_k^{\gamma}$ on $\mathbb R^n$ by 
\begin{equation}\label{defBgamma}
B_0^\gamma= 1,\qquad \left(\frac{\partial}{\partial \xi_n}\right)^k \vert \xi\vert^{2(\gamma+k)} = B_k^{\gamma}(\xi) \vert\xi\vert^{2\gamma}
\end{equation}

\begin{lemma} Let $\gamma\in \mathbb C$. For any $k\geq 1$,
\begin{equation}\label{recB}
B_k^\gamma=2(\gamma+1)\xi_nB_{k-1}^{\gamma+1}+ \vert \xi\vert^2\frac{\partial}{\partial \xi_n} B_{k-1}^{\gamma+1}\ .
\end{equation}
\end{lemma}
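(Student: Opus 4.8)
The plan is to imitate the proofs of the analogous recurrences for $q_l^{\alpha,\beta}$ and $p_k^{\alpha,\beta}$ given above: peel off one derivative $\partial/\partial\xi_n$ from the defining relation \eqref{defBgamma} and apply the product rule. Concretely, I would start from
\[
\left(\frac{\partial}{\partial \xi_n}\right)^k \vert \xi\vert^{2(\gamma+k)}
= \frac{\partial}{\partial \xi_n}\left(\left(\frac{\partial}{\partial \xi_n}\right)^{k-1} \vert \xi\vert^{2((\gamma+1)+(k-1))}\right),
\]
and then invoke \eqref{defBgamma} with $\gamma$ replaced by $\gamma+1$ and $k$ by $k-1$ to rewrite the inner expression as $B_{k-1}^{\gamma+1}(\xi)\,\vert\xi\vert^{2(\gamma+1)}$.

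Next I would differentiate the product $B_{k-1}^{\gamma+1}\,\vert\xi\vert^{2(\gamma+1)}$ with respect to $\xi_n$ by the Leibniz rule. The only non-elementary ingredient is $\frac{\partial}{\partial \xi_n}\vert\xi\vert^{2(\gamma+1)} = 2(\gamma+1)\,\xi_n\,\vert\xi\vert^{2\gamma}$, which follows from $\partial_{\xi_n}\vert\xi\vert^2 = 2\xi_n$. This yields
\[
\left(\frac{\partial}{\partial \xi_n}\right)^k \vert \xi\vert^{2(\gamma+k)}
= \vert\xi\vert^{2\gamma}\left(\vert\xi\vert^2\,\frac{\partial}{\partial \xi_n} B_{k-1}^{\gamma+1} + 2(\gamma+1)\,\xi_n\, B_{k-1}^{\gamma+1}\right).
\]
Comparing with the left-hand side $B_k^\gamma(\xi)\,\vert\xi\vert^{2\gamma}$ of \eqref{defBgamma} and cancelling the common factor $\vert\xi\vert^{2\gamma}$ (legitimate on the dense open set $\{\xi\ne 0\}$, hence as an identity of polynomials, since both sides of \eqref{recB} are polynomial in $\xi$) gives exactly \eqref{recB}.

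There is essentially no obstacle here: the argument is a one-line Leibniz computation, and the only points worth recording are the bookkeeping of exponents (the shift $\gamma\mapsto\gamma+1$, $k\mapsto k-1$) and the remark that $\vert\xi\vert^{2\gamma}$ may be cancelled because it is a nonvanishing factor away from the origin while both members of \eqref{recB} are genuine polynomials. It is also worth noting in passing, as already observed after \eqref{defBgamma}, that $B_{k-1}^{\gamma+1}$ is a homogeneous polynomial of degree $2(k-1)$, so that $\vert\xi\vert^2\,\partial_{\xi_n}B_{k-1}^{\gamma+1}$ and $\xi_n B_{k-1}^{\gamma+1}$ are indeed polynomials of degree $2k$, consistently with $B_k^\gamma$ being homogeneous of degree $2k$.
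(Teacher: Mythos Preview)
Your proof is correct and follows exactly the paper's approach: peel off one $\partial/\partial\xi_n$, apply the defining relation \eqref{defBgamma} with shifted parameters, use Leibniz, and cancel $\vert\xi\vert^{2\gamma}$. One minor slip in your closing remark: $B_{k-1}^{\gamma+1}$ is homogeneous of degree $k-1$, not $2(k-1)$, so the two terms on the right of \eqref{recB} have degree $k$, consistent with $B_k^\gamma$ having degree $k$ (as the paper notes after \eqref{Gegen}).
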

\begin{proof}

\[\frac{\partial}{\partial \xi_n}^k (\vert\xi\vert)^{2(\gamma+k)}= \frac{\partial}{\partial \xi_n} \left( \left(\frac{\partial}{\partial \xi_n}\right)^{k-1} \vert \xi \vert^{2(\gamma+1+(k-1)}\right)=\frac{\partial}{\partial \xi_n}\left( \vert \xi\vert^{2(\gamma+1)}B_{k-1}^{\gamma+1}(t)\right)
\]
\[=  \vert \xi\vert^{2(\gamma+1)}\,\frac{\partial}{\partial \xi_n}  B_{k-1}^{\gamma+1}\xi)+2(\gamma+1)\,\xi_n\,\vert \xi \vert^{2\gamma}\, B_{k-1}^{\gamma+1}(\xi)\ ,
\]
and the conclusion follows.
\end{proof}
\begin{theorem}\label{theoremJ1}
 Let $\lambda\in \mathbb C$ and $k\in \mathbb N$. The following identity issatisfied
\begin{equation}
p_k^\lambda (\xi)= B_k^{\lambda -\frac{n}{2}}(i\xi)\ .
\end{equation}
\end{theorem}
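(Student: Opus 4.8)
The plan is to prove the identity by induction on $k$, exactly along the lines used for Theorems~\ref{theoremRC1} and \ref{theoremBC1}: I would show that the polynomial $\xi\mapsto B_k^{\lambda-\frac{n}{2}}(i\xi)$ satisfies the same recursion \eqref{recJ} as the symbol $j_\lambda^{(k)}$ (the quantity written $p_k^\lambda$ in the statement) and has the same value at $k=0$.

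For the base case $k=0$, the initial condition $j_\lambda^{(0)}\equiv 1$ meets $B_0^\gamma=1$, which holds for every $\gamma$ by \eqref{defBgamma}, so both sides of the asserted identity equal $1$. For the inductive step I would fix $k\ge 1$, assume the identity for $k-1$ and every parameter, and set $\gamma=\lambda-\tfrac{n}{2}$. Then $\gamma+1=(\lambda+1)-\tfrac{n}{2}$, so the parameter shift $\gamma\mapsto\gamma+1$ appearing in the recurrence \eqref{recB} for $B_k^\gamma$ corresponds precisely to the shift $\lambda\mapsto\lambda+1$ in \eqref{recJ}; moreover $2(\gamma+1)=2\lambda-n+2$, which is exactly the scalar occurring in \eqref{recJ}. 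Next I would evaluate \eqref{recB} at the argument $i\xi$, transforming each ingredient: the degree-one factor $\xi_n$ becomes $i\,\xi_n$; the quadratic form, being homogeneous of degree two, satisfies $|i\xi|^2=-|\xi|^2$; and by the chain rule $\left(\tfrac{\partial}{\partial\xi_n}B_{k-1}^{\gamma+1}\right)(i\xi)=\tfrac1i\,\tfrac{\partial}{\partial\xi_n}\!\left(B_{k-1}^{\gamma+1}(i\xi)\right)$. Feeding these into \eqref{recB} and invoking the inductive hypothesis in the form $B_{k-1}^{\gamma+1}(i\xi)=j_{\lambda+1}^{(k-1)}(\xi)$, one finds that $\xi\mapsto B_k^{\lambda-\frac{n}{2}}(i\xi)$ obeys the recurrence \eqref{recJ}; since the two sequences agree at $k=0$ and satisfy the same recursion, they coincide for all $k$, which is the claim.

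The one point that needs care is the bookkeeping of the factors of $i$ produced by the rescaling $\xi\mapsto i\xi$: this substitution multiplies the linear term $\xi_n$ by $i$, the quadratic form $|\xi|^2$ by $i^2=-1$, and the differentiation $\partial/\partial\xi_n$ by $i^{-1}$ (it lowers the degree by one), and one must verify that the net factors so produced match exactly those displayed in \eqref{recJ}. I expect no other obstacle, since \eqref{recJ}, \eqref{recB} and \eqref{defBgamma} are all already established. As a byproduct, because $B_k^\gamma$ is homogeneous of degree $k$ in $\xi$ (visible from \eqref{defBgamma}), the identity can be rewritten as $j_\lambda^{(k)}(\xi)=i^{\,k}\,B_k^{\lambda-\frac{n}{2}}(\xi)$, the expected analogue of Theorems~\ref{theoremRC1} and \ref{theoremBC1}.
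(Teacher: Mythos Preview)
Your proposal is correct and follows exactly the same route as the paper: induction on $k$ with base case $j_\lambda^{(0)}=B_0^{\lambda-n/2}=1$, then matching the recurrences \eqref{recJ} and \eqref{recB} after setting $\gamma=\lambda-\tfrac{n}{2}$ (so that $2(\gamma+1)=2\lambda-n+2$). The paper's proof is terser---it simply invokes the homogeneity of $B_k^\gamma$ to handle the factors of $i$ in one stroke, which is the shortcut you recover at the end when rewriting the identity as $j_\lambda^{(k)}(\xi)=i^{k}B_k^{\lambda-n/2}(\xi)$.
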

\begin{proof}
Notice that $p_0^\lambda = B_0^{\lambda-\frac{n}{2}}=1$, use the homogeneity of the polynomials and compare \eqref{recJ} and \eqref{recB}  for $\gamma = \lambda-\frac{n}{2}$.
\end{proof}

 Notice that Theorem {\bf A2} in the introduction is merely a reformulation of Theorem \ref{theoremJ1}.
 
The polynomials $B_k^\gamma$ are connected with the classical \emph{Gegenbauer polynomials}. In fact, the latter may be defined through the Rodrigues formula (see \cite{gr} page 993)

\begin{equation}\label{Gegen}
C_k^\lambda(t) = c_k(\lambda)(1-t^2)^{-(\lambda-\frac{1}{2})} \left(\frac{d}{dt}\right)^k(1-t^2)^{k+\lambda-\frac{1}{2}}
\end{equation}
where 
\[c_k(\lambda) = \frac{(-1)^k \Gamma(\lambda+\frac{1}{2})\Gamma(k+2\lambda)}{2^k k! \Gamma(2\lambda)\Gamma(k+\lambda+\frac{1}{2})}\ .
\]
Observe first that $B_k^\gamma$ is a homogeneous polynomial of degree $k$. Next, as $\vert\xi\vert^2= \vert \xi'\vert +\xi_n^2 $,  $B^\lambda_k(\xi',\xi_n)$ can be written as a polynomial in $\xi_n$ and $\vert \xi'\vert ^2$, so set
\[B^\gamma_k(\xi',\xi_n) = A_k^\gamma( \vert \xi'\vert, \xi_n)
\]
where $A_k^\gamma$ is a polynomial of two variables, homogeneous of degree $k$ and even in the first variable. With this notation, \eqref{defBgamma} implies
\begin{equation}\label{recA}
\left(\frac{\partial}{\partial t} \right)^k (s^2+t^2)^{\gamma+k} = A_k^\gamma(s,t) (s^2+t^2)^\gamma\ .
\end{equation}

\begin{proposition}
\begin{equation}\label{Gegen1}
A_k^\gamma(s,t) = c_k\left(\gamma+\frac{1}{2}\right)^{-1}(-i)^k\, s^k\,C_k^{\gamma+\frac{1}{2}}\left(\frac{t}{is} \right)\ .
\end{equation}
\end{proposition}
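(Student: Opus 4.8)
The plan is to obtain \eqref{Gegen1} from the two–variable Rodrigues relation \eqref{recA} by the substitution $t = isu$, which turns $s^2+t^2$ into $s^2(1-u^2)$ and so converts \eqref{recA} into the one–variable Gegenbauer Rodrigues formula \eqref{Gegen} with parameter $\lambda = \gamma + \tfrac12$.

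First I would fix $s>0$ and promote \eqref{recA}, initially an identity of real smooth functions of $t$, to an identity of holomorphic functions of $t$ on a simply connected neighbourhood of the imaginary segment $\{\,isu : -1<u<1\,\}$: along that segment $s^2+t^2 = s^2(1-u^2) > 0$, so the principal branch of $(s^2+t^2)^{z}$ is available there, and the identity theorem propagates \eqref{recA} from the real axis. Then, because $t \mapsto isu$ is affine, $(d/dt)^k = (is)^{-k}(d/du)^k$ and $(s^2+t^2)^{\gamma+k} = s^{2(\gamma+k)}(1-u^2)^{\gamma+k}$; feeding this into \eqref{recA} and cancelling the common factor $s^{2\gamma}(1-u^2)^{\gamma}$ gives
\[A_k^\gamma(s,isu) = \frac{s^{2k}}{(is)^{k}}\,(1-u^2)^{-\gamma}\left(\frac{d}{du}\right)^{k}(1-u^2)^{\gamma+k} = (-i)^k s^k\,(1-u^2)^{-\gamma}\left(\frac{d}{du}\right)^{k}(1-u^2)^{\gamma+k},\]
using $1/i^k = (-i)^k$. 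Comparing the trailing factor with \eqref{Gegen} for $\lambda = \gamma+\tfrac12$ (so $\lambda - \tfrac12 = \gamma$ and $k+\lambda-\tfrac12 = k+\gamma$) identifies it as $c_k(\gamma+\tfrac12)^{-1} C_k^{\gamma+1/2}(u)$, hence $A_k^\gamma(s,isu) = (-i)^k s^k c_k(\gamma+\tfrac12)^{-1} C_k^{\gamma+1/2}(u)$.

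To finish, for each fixed $s>0$ both sides are polynomials in $u$ agreeing on $(-1,1)$, hence agreeing identically; setting $u = t/(is)$ yields \eqref{Gegen1} for all $t$ and all $s>0$. Since the Gegenbauer polynomial $C_k^{\gamma+1/2}$ is a combination of monomials $u^{k-2j}$, the expression $s^k C_k^{\gamma+1/2}(t/(is))$ is an honest polynomial in $(s,t)$ (and even in $s$), so the identity on the half–space $\{s>0\}$ upgrades to an identity of polynomials in $(s,t)$, which is the assertion.

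I expect the only genuinely delicate point to be this analytic–continuation step — checking that \eqref{recA} may legitimately be read along complex $t = isu$ with the correct branch of $(s^2+t^2)^z$; the remainder is bookkeeping with powers of $i$ and matching the Gegenbauer index. If one prefers to avoid complex substitution, an equivalent route is purely recursive: Leibniz applied to \eqref{recA} yields $A_k^\gamma(s,t) = (s^2+t^2)\,\partial_t A_{k-1}^{\gamma+1}(s,t) + 2(\gamma+1)\,t\,A_{k-1}^{\gamma+1}(s,t)$ with $A_0^\gamma \equiv 1$, the same Leibniz computation on \eqref{Gegen} gives the analogous relation expressing $C_k^\lambda$ through $C_{k-1}^{\lambda+1}$ and its derivative, and one checks that $(-i)^k s^k c_k(\gamma+\tfrac12)^{-1} C_k^{\gamma+1/2}(t/(is))$ obeys the $A$–recurrence, concluding by induction on $k$.
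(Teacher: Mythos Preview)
Your proof is correct and follows essentially the same route as the paper: both perform the substitution $u = t/(is)$ (equivalently $t = isu$) to convert the two-variable Rodrigues relation \eqref{recA} into the one-variable Gegenbauer Rodrigues formula \eqref{Gegen} with $\lambda = \gamma + \tfrac12$, then compare. You are more explicit than the paper about the analytic-continuation point (the paper simply writes $F(s,t)=f(t/(is))$ and computes formally), and your alternative inductive route via the recurrence is not mentioned there.
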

\begin{proof}
Let $f$ be a function of one variable, and associate the function of two variables given by $\displaystyle F(s,t) = f\left( \frac{t}{is}\right)$. Then
\[\left(\frac{\partial}{\partial t}\right)^k F(s,t) = (-i)^k s^{-k}f^{(k)}\left(\frac{t}{is}\right)
\]
Apply this relation to 
\[ f(t)= (1-t^2)^{k+\lambda-\frac{1}{2}}, \qquad F(s,t) = s^{-2k-2\lambda+1}(s^2+t^2)^{k+\lambda-\frac{1}{2}}\ .
\]
Now, by \eqref{Gegen}
\[f^{(k)}(t) = c(k, \lambda)^{-1} (1-t^2)^{\lambda-\frac{1}{2}}C_k^\lambda(t)
\]
whereas by \eqref{recA} and letting $\gamma = \lambda-\frac{1}{2}$

\[\left(\frac{\partial}{\partial t}\right)^kF(s,t) = s^{-2k-2\lambda+1}(s^2+t^2)^{\lambda-\frac{1}{2}}A_k^{\lambda-\frac{1}{2}}(s,t)\ .
\]
 \eqref{Gegen} follows.
 \end{proof}

 Notice that Theorem {\bf A3} in the introduction is merely a reformulation of Theorem \ref{theoremBC1}.

\medskip
\footnotesize{\noindent Address\\ Jean-Louis Clerc, Institut Elie Cartan, Universit\'e de Lorraine, 54506 Vand\oe uvre-l\`es-Nancy, France\medskip

\noindent \texttt{{jean-louis.clerc@univ-lorraine.fr
}}

\end{document}